\documentclass[11pt]{article}
\usepackage[utf8]{inputenc}
\usepackage[dvipsnames]{xcolor}
\usepackage{graphicx,amsfonts,amssymb,amsmath,latexsym,amsthm,fullpage, framed,url,tikz,comment,cancel,mathtools, hyperref} 
\usepackage[normalem]{ulem}
\usepackage{centernot}
\usepackage{multicol}
\usepackage{hyperref}
\usepackage{dsfont}
\usepackage[T1]{fontenc}

\setlength{\tabcolsep}{18pt}

\usepackage[
backend=biber,
style=numeric,
sorting=nyt,
maxcitenames=2,
maxbibnames=10,
giveninits=true,
]{biblatex}

\bibliography{References} 

\addbibresource{References.bib}

\newcommand{\dist}{\mathrm{d}}
\newcommand{\e}{\varepsilon}
\newcommand{\eps}{\varepsilon}
\newcommand{\ceq}{\coloneqq}
\newcommand{\comp}{\mathsf{c}}

\newcommand{\tx}{\tilde{x}}

\newcommand{\cP}{\mathcal{P}}
\newcommand{\cB}{\mathcal{B}}

\newcommand{\R}{\mathbb{R}}

\newcommand{\N}{\mathbb{N}}
\newcommand{\E}{\mathbb{E}}
\newcommand{\prob}{\mathbb{P}}

\newcommand{\ds}{\displaystyle}

\newcommand{\grad}{\nabla}

\DeclareMathOperator*{\argmin}{\arg\min}
\DeclareMathOperator{\Per}{Per}
\DeclareMathOperator{\ProbPer}{ProbPer}

\DeclareUnicodeCharacter{0301}{\'{e}}
\newcommand{\email}[1]{\href{mailto:#1}{#1}}

\newtheorem{theorem}{Theorem}[section]
\newtheorem{proposition}[theorem]{Proposition}

\newtheorem{lemma}[theorem]{Lemma}
\newtheorem{corollary}[theorem]{Corollary}
\newtheorem*{theorem*}{Theorem}
\newtheorem*{proposition*}{Proposition}

\theoremstyle{definition}

\newtheorem{definition}[theorem]{Definition}

\newtheorem{remark}[theorem]{Remark}
\newtheorem{assumption}[theorem]{Assumption}

\mathtoolsset{showonlyrefs}

\usepackage{import}
\usepackage{xifthen}
\usepackage{pdfpages}
\usepackage{transparent}

\begin{document}

\title{Uniform Convergence of Adversarially Robust Classifiers}
\author{Rachel Morris\thanks{North Carolina State University, \email{rachel.morris@mail.concordia.ca}} \and Ryan Murray\thanks{North Carolina State University, \email{rwmurray@ncsu.edu}}}

\maketitle

\begin{abstract}
In recent years there has been significant interest in the effect of different types of adversarial perturbations in data classification problems. Many of these models incorporate the adversarial power, which is an important parameter with an associated trade-off between accuracy and robustness. This work considers a general framework for adversarially-perturbed classification problems, in a large data or population-level limit. In such a regime, we demonstrate that as adversarial strength goes to zero that optimal classifiers converge to the Bayes classifier in the Hausdorff distance. This significantly strengthens previous results, which generally focus on $L^1$-type convergence. The main argument relies upon direct geometric comparisons and is inspired by techniques from geometric measure theory. 

\end{abstract}


\section{Introduction}

In recent years, neural networks have achieved remarkable success in a wide range of classification and learning tasks. However, it is now well-known that these networks do not learn in the same ways as humans and will fail in specific settings. In particular, a wide range of recent work has shown that they fail to be robust to specially designed adversarial attacks  \cite{engstrom2019spatial}, \cite{eykholt2018realAttack}, \cite{Goodfellow2014ExplainingAH},   \cite{qin2019imperceptible}, \cite{szegedy2013}. 

One general approach for mitigating this problem is to include an adversary in the training process. A simple mathematical formulation of this method for 0-1 loss \cite{madry2018}, in the large-data or population limit, is to consider the optimization problem
\[
\min_A J_\e(A), \qquad J_\e(A) :=  \mathbb{E} \left[\max_{\tilde x \in B(x,\e)} |\mathds{1}_A(\tilde x) - y|\right],\]
where the $y$ variables represent observed classification labels and $x$ variables represent features (we give a more precise description of our setting in the next section). This can be seen as a robust optimization problem, where an adversary is allowed to modify the inputs to our classifier up to some distance $\e$. When $\e =0$ this corresponds to the standard Bayes risk. 

Recent work has significantly expanded our mathematical understanding of this problem. Our work directly builds upon \textcite{bungert2023geom}, which  rewrites the previous functional as
\[
J_\e(A) =  \mathbb{E}[|\mathds{1}_A(x) - y|] + \e \Per_{\varepsilon}(A),
\]
where $\Per_{\varepsilon}$ is a special data-adapted perimeter, whose definition is given in \eqref{eps perim}. This is related to a growing body of recent work, for example showing that $\Per_{\varepsilon}$ converges to the (weighted) classical perimeter \cite{bungert2024gammaconv}, and demonstrating links between the adversarially robust training problem and mean curvature flow \cite{bungert2024mean}, \cite{ngt2022NecCond}. This literature seeks to provide a more complete description of the effect of $\e$ on adversarially robust classifiers in a geometric sense. This relates to the study of nonlocal perimeter minimization and flows   \cite{cesaroni2018minimizersnonlocal}, \cite{CesaroniNovaga2017isoperi}, \cite{chambolle2015nonlocalflows}, where the \textit{unweighted} $\e$-perimeter is considered. As training these robust classifiers is generally a challenging task, one overarching goal of this type of work is to provide a more precise understanding of the effect of $\e$, practical means for approximating that effect, and the impact on classifier complexity: each of these has the potential to improve more efficient solvers for these problems.

Various modifications of the robust classification energy $J_\e$ have been proposed. For example, some authors relax either the criteria for an adversarial attack or the loss function to interpolate between the accurate yet brittle Bayes classifier and the robust yet costly minimizers of the adversarial training problem \cite{bungert2023begins}, \cite{heredia2023random}, \cite{raman2023proper}, \cite{robey22a}. Still others employ optimal transport techniques to study distributionally robust optimization where instead of perturbing data points, the adversary perturbs the underlying data distribution \cite{frankNW2024minmax}, \cite{trillos2023existence},  \cite{trillosJacobsKimMOT1},  \cite{pydiJogOT}, \cite{pydiJog2024}.  

The main goal of this paper is to study the convergence of solutions to the adversarially robust classification problem towards the original Bayes classification task for data-perturbing models. We build a framework that allows us to consider a wide range of adversarial settings at the same time. In doing so, we obtain Hausdorff convergence results, which are generally much stronger than the $L^1$-type results previously obtained \cite{bungert2023begins}. These results parallel many of the basic results in the study of variational problems involving perimeters, wherein one first proves stability in $L^\infty$ spaces, and then subsequently proves stronger regularity results for minimizers. In a similar way, we see our results as a building block towards stronger regularity results for the adversarially robust classification problem, which have received significant attention in the literature. We begin by concretely describing the setup of our problem and then giving an informal statement of our results along with some discussion.

\subsection{Setup}\label{sec:setup}

Let the Euclidean space $\R^d$ equipped with the metric $\dist(\cdot,\cdot)$ represent the space of features for a data point, and let $\cB(\R^d)$ be the set of all Borel measurable subsets of $\R^d$. We will let $\mathcal L^d$ be the $d$-dimensional Lebesgue measure. We are considering a supervised binary classification setting, in which training pairs $(x,y)$ are distributed according to a probability measure $\mu$ over $\R^d\times \{0,1\}$. Here $y$ represents the class associated with a given data point, and the fact that $y \in \{0,1\}$ corresponds to the binary classification setting. Let $\rho$ denote the $\R^d$ marginal of $\mu$, namely $\rho(A) = \mu(A\times \{0,1\})$. We decompose $\rho \in \cP(\R^d)$ into $\rho = w_0\rho_0 + w_1\rho_1$ where $w_i = \mu(\R^d \times \{i\})$, and the conditional probability measure $\rho_i \in \cP(\R^d)$ for a set $A\in\cB(\R^d)$ is \[\rho_i(A) = \frac{\mu(A\times \{i\})}{w_i}\]for $i = 0,1$. All of these measures are assumed to be Radon measures. 

In binary classification, we associate a set $A\in\cB(\R^d)$ with a classifier, meaning that $x\in \R^d$ is assigned label 1 when $x\in A$ and $x$ is assigned the label 0 when $x\in A^\comp$. Unless otherwise stated, we will assume all classifiers $A\in \cB(\R^d).$ The Bayes classification problem for the 0-1 loss function is given by
\begin{equation}
\inf_{A\in \cB(\R^d)}\E_{(x,y)\sim \mu}[|\mathds{1}_A(x) - y|].
\label{eqn:BC}
\end{equation} 
In this work, we will only consider the 0-1 loss function
, which allows us to restrict our attention to indicator functions for minimizers of \eqref{eqn:BC}. 
We refer to minimizes of the Bayes risk as \textit{Bayes classifiers}. 
\begin{remark}[Uniqueness of Bayes Classifiers]
If we assume that $\rho$ has a density everywhere on $\R^d$ and  identify the measures $\rho_i$ with the density at $x\in \R^d$ given by $\rho_i(x)$, then we can describe the uniqueness, or lack thereof, of Bayes classifiers in terms of those densities. Specifically, Bayes classifiers are unique up to the set $\{w_1\rho_1 = w_0\rho_0\}$, which may be a set of positive measure depending on $\mu$. We define maximal and minimal Bayes classifiers (in the sense of set inclusion) by 
\begin{equation}A_0^{\max} \coloneqq \{x\in \R^d: w_1\rho_1(x) \ge w_0\rho_0(x)\}, \qquad A_0^{\min} \coloneqq \{x\in \R^d: w_1\rho_1(x) > w_0\rho_0(x)\}. \label{minmax BC}\end{equation} 

When $w_0\rho_0-w_1\rho_1 \in C^1$ and $|w_0\grad \rho_0 - w_1\grad \rho_1| > \alpha > 0$ on the set $\{w_0\rho_0 = w_1\rho_1\}$, the Bayes classifier is unique up to sets of $\rho$ measure zero. In the case where $\rho$ is supported everywhere, Bayes classifiers are unique up to sets of $\mathcal L^d$ measure zero. Whenever we refer to the Bayes classifier as unique, we mean unique in this measure-theoretic sense. Later on in Assumption~\ref{nondegen assumption}, we will refer to such uniqueness as the ``non-degeneracy'' of the Bayes classifier and represent the unique classifier by $A_0$. 
\label{rem:BC uniqueness}
\end{remark}

Throughout this paper, we will consider and seek to unify two optimization problems from the literature that aim to train robust classifiers. First, we consider the \textit{adversarial training problem}, which trains classifiers to mitigate the effect of worst-case perturbations \cite{madry2018}. The adversarial training problem is 
\begin{equation}
    \inf_{A\in \cB(\R^d)}\E_{(x,y)\sim \mu}\Bigg[\sup_{\tx\in B_\dist(x,\varepsilon)}|\mathds{1}_A(\tx) - y|\Bigg],
\end{equation}
where $B_\dist(x,\varepsilon)$ is the \textit{open} metric ball of radius $\varepsilon> 0$. The existence of solutions in this setting was previously established \cite{bungert2023geom}. The parameter $\varepsilon$ is called the \textit{adversarial budget}, and it represents the strength of the adversary. 
By using the open ball, we are following the conventions set in the previous work on convergence of optimal adversarial classifiers \cite{bungert2024gammaconv}. Other works have utilized the closed ball due to consistency with the standard classification problem when $\eps = 0$, but that comes at the price of added measurability concerns: see Remark~\ref{rem:atp related work} for more details. 

An equivalent form of this variational problem  (see \cite{bungert2023geom}), wherein the problem is rewritten using a nonlocal perimeter, is
\begin{equation}\inf_{A\in \cB(\R^d)} \E_{(x,y)\sim\mu}[|\mathds{1}_A(x) - y|] + \varepsilon\Per_{\varepsilon}(A)     \label{eqn:ATP}\end{equation} with the \textit{$\varepsilon$-perimeter} defined by \[\Per_{\varepsilon}(A) \ceq \frac{w_0}{\varepsilon}\int_{\R^d} \left[\sup _{\tx\in B_\dist(x,\varepsilon)}\mathds{1}_A(\tx) - \mathds{1}_A(x)\right] \, d\rho_0(x) + \frac{w_1}{\varepsilon}\int_{\R^d} \left[\mathds{1}_A(x)-\inf _{\tx\in B_\dist(x,\varepsilon)}\mathds{1}_A(\tx) \right]\, d\rho_1(x).\] 
This normalization with $\varepsilon$ in the denominator is chosen so that we recover the (weighted) classical perimeter as $\varepsilon\to 0^+$. In this sense, we consider the nonlocal $\varepsilon$-perimeter a data-adapted approximation of the classical perimeter. From the variational problem given by \eqref{eqn:ATP}, we define the \textit{adversarial classification risk} for a classifier $A\in \cB(\R^d)$ as \[J_\varepsilon(A) \ceq \E_{(x,y)\sim \mu}[|\mathds{1}_A(x)-y|] + \e\Per_{\varepsilon}(A).\]

When considering the $\varepsilon$-perimeter, the region affected by adversarial perturbations must be within distance $\varepsilon$ of the decision boundary of the classifier. As such, it will be helpful to be able to discuss sets that either include or exclude the $\varepsilon$-perimeter region. From mathematical morphology \cite{serra83morph}, for a set $A\in\R^d$ and $\varepsilon>0$ we define the
\begin{itemize} 
\item \textit{$\varepsilon$-dilation} of $A$ as $A^\varepsilon\ceq \{x\in \R^d: \dist(x,A) <\varepsilon\}$, 
\item \textit{$\varepsilon$-erosion} of $A$ as $A^{-\varepsilon}\ceq \{x\in \R^d: \dist(x,A^\comp) \ge \varepsilon\}$.
\end{itemize}
Using this notation, one can equivalently express the $\varepsilon$-perimeter as \begin{equation}\text{Per}_\varepsilon(A) \ds = \frac{w_0}{\varepsilon}\rho_0(A^\varepsilon\setminus A)+ \frac{w_1}{\varepsilon}\rho_1(A\setminus A^{-\varepsilon}).\label{eps perim}\end{equation} 
Inspired by the notation in geometric measure theory, we also define the \textit{relative $\varepsilon$-perimeter} for a classifier $A\in \cB(\R^d)$ with respect to a set $E\in\cB(\R^d)$ by
\begin{equation}\Per_{\varepsilon}(A;E) \ceq \frac{w_0}{\varepsilon}\rho_0((A^\varepsilon\setminus A) \cap E)+ \frac{w_1}{\varepsilon}\rho_1((A\setminus A^{-\varepsilon})\cap E).\label{rel eps perim}\end{equation}


\begin{remark}[Previous work for the adversarial training problem \eqref{eqn:ATP}] 

The worst-case adversarial training model was initially proposed for general loss functions by \textcite{madry2018}. When the loss function is specified to be the 0-1 loss function, previous work has established the existence and considered the equivalence of minimizers to \eqref{eqn:ATP} for the open and closed ball models \cite{Awasthi2021OnTE}, \cite{bungert2023geom},  \cite{trillos2023existence}, \cite{pydiJog2024}. Although the open and closed ball models are similar, there are some subtle differences that must be considered. While measurability of $\sup_{\tx \in B_\dist(x,\e)} \mathds 1_A(\tx)$ for a Borel set $A$ in the open ball model is trivial, the same cannot be said for the closed ball model; to address these measurability concerns in the closed ball model, one must employ the universal $\sigma$-algebra instead of the Borel $\sigma$-algebra. We emphasize that we choose to study the open ball model as this simplifies the analysis and measurability concerns associated with the closed ball model, and the open ball model was used for prior convergence results \cite{bungert2024gammaconv}. 

Some papers consider a surrogate adversarial risk which is more computationally tractable \cite{Awasthi2021calibration}, \cite{Bao2020calibrated},     \cite{frankNW2024minmax}, \cite{Meunier2022consistency}; others explore necessary conditions and geometric properties of minimizers \cite{bungert2024mean}, \cite{bungert2024gammaconv}, \cite{ngt2022NecCond}. Of particular note to the present work is the study of the limit of minimizers of $J_\e$. Theorem 2.5 states \cite{bungert2024gammaconv} ,
\begin{theorem*}[Conditional convergence of adversarial training]
    Under the conditions of Theorems 2.1 and 2.3 from \cite{bungert2024gammaconv} and assuming the source condition, any sequence of solutions to \[\inf_{A\in \mathcal B(\Omega)}\E_{(x,y)\sim\mu}\left[\sup_{\tilde x \in B(x,\e)\cap \Omega} |\mathds{1}_A(\tilde x) - y|\right]\] possesses a subsequence converging to a minimizer of \[\min\{ \Per(A;\rho):A\in \argmin_{B\in \mathcal B(\Omega)} \E_{(x,y)\sim \mu}[|\mathds{1}_B(x)-y|]\}.\]
    \label{thm:gamma conv}
\end{theorem*}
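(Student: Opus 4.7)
My plan is to use the standard selection-principle template for $\Gamma$-convergence plus compactness, in which the rescaled $\varepsilon$-perimeter term $\varepsilon\Per_\varepsilon$ plays the role of a vanishing regularization that distinguishes between competing Bayes classifiers. Let $\{A_\varepsilon\}$ denote a sequence of minimizers of the adversarial problem. The argument has three main steps: (i) derive a uniform perimeter bound via a competitor argument, (ii) extract an $L^1$ convergent subsequence via compactness, and (iii) identify the limit using $\Gamma$-$\liminf$/$\limsup$ inequalities and the source condition.

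For step (i), pick $A^\star \in \argmin\{\Per(A;\rho) : A \in \argmin_B \E[|\mathds{1}_B(x) - y|]\}$, which exists by the hypotheses. Testing $A_\varepsilon$ against $A^\star$ gives
\[
\E[|\mathds{1}_{A_\varepsilon} - y|] + \varepsilon\Per_\varepsilon(A_\varepsilon) \le \E[|\mathds{1}_{A^\star} - y|] + \varepsilon \Per_\varepsilon(A^\star).
\]
Since $A^\star$ is a Bayes classifier, the difference of the Bayes-risk terms is nonnegative, so $\Per_\varepsilon(A_\varepsilon) \le \Per_\varepsilon(A^\star)$. Invoking the $\Gamma$-$\limsup$ half of the referenced Theorems 2.1 and 2.3 (which gives $\limsup_\varepsilon \Per_\varepsilon(A^\star) \le \Per(A^\star;\rho) < \infty$, and in fact convergence under the source condition), I obtain uniform control $\sup_\varepsilon \Per_\varepsilon(A_\varepsilon) < \infty$. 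Simultaneously the inequality forces $\E[|\mathds{1}_{A_\varepsilon} - y|] \to \min_B \E[|\mathds{1}_B - y|]$ as $\varepsilon\to 0^+$.

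For step (ii), the uniform $\varepsilon$-perimeter bound together with the compactness result from Theorem 2.1 of \cite{bungert2024gammaconv} yields a subsequence $\varepsilon_n\to 0$ and a set $A^\infty$ with $\mathds{1}_{A_{\varepsilon_n}} \to \mathds{1}_{A^\infty}$ in $L^1(\Omega)$. Continuity of the Bayes risk under $L^1$ convergence of indicators (immediate since $\rho$ is a finite Radon measure) combined with the convergence $\E[|\mathds{1}_{A_{\varepsilon_n}} - y|] \to \min \E[|\mathds{1}_B - y|]$ from step (i) shows that $A^\infty$ itself lies in $\argmin_B \E[|\mathds{1}_B(x) - y|]$. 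For step (iii), I apply the $\Gamma$-$\liminf$ inequality for $\Per_\varepsilon$ against $\Per(\,\cdot\,;\rho)$ and chain it with step (i):
\[
\Per(A^\infty;\rho) \le \liminf_{n\to\infty}\Per_{\varepsilon_n}(A_{\varepsilon_n}) \le \limsup_{n\to\infty}\Per_{\varepsilon_n}(A^\star) = \Per(A^\star;\rho),
\]
where the equality uses the source condition to upgrade the $\Gamma$-$\limsup$ to true convergence on the distinguished competitor $A^\star$. Since $A^\infty$ is a Bayes classifier and realizes the minimum of $\Per(\,\cdot\,;\rho)$ over this set, the proof is complete.

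The main obstacle, in my view, is the $\Gamma$-$\liminf$ inequality for $\Per_\varepsilon$ with respect to the data measure $\rho$, since $\rho$ need not be absolutely continuous and may concentrate on thin sets, so classical BV lower-semicontinuity does not apply directly; one instead uses the dilation/erosion representation \eqref{eps perim} and a careful covering/blow-up analysis. A secondary delicacy is verifying that the source condition is what precisely bridges the $\Gamma$-$\limsup$ gap on $A^\star$: without it, one only gets inequality on the right-hand side of the display, which is insufficient to conclude that $A^\infty$ is itself a perimeter minimizer among Bayes classifiers rather than just an arbitrary $L^1$ cluster point.
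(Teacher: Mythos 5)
This statement is not proved in the paper at all: it is Theorem 2.5 of \cite{bungert2024gammaconv}, quoted verbatim inside a remark surveying prior work, so there is no in-paper proof to compare against. Your proposal is the standard $\Gamma$-convergence selection principle (competitor bound, compactness, $\liminf$/$\limsup$ sandwich), which is exactly the template used in the cited reference, and the sketch is sound as far as it goes: the competitor inequality correctly yields both $\Per_\varepsilon(A_\varepsilon)\le\Per_\varepsilon(A^\star)$ and convergence of the Bayes risk, and the final chain of inequalities correctly identifies the limit. Two small remarks: you only need $\limsup_n\Per_{\varepsilon_n}(A^\star)\le\Per(A^\star;\rho)$, not full convergence on $A^\star$, so the ``equality'' in your last display can be weakened to ``$\le$'' without loss; and your proposal, appropriately, outsources all of the genuinely hard analysis (the $\liminf$ inequality for $\Per_\varepsilon$ against the weighted perimeter, the compactness statement, and the fact that the source condition controls $\Per_\varepsilon(A^\star)$ on the fixed set $A^\star$ rather than on a recovery sequence) to the hypotheses ``Theorems 2.1 and 2.3 and the source condition,'' which is consistent with how the theorem is stated but means the proposal is a reduction rather than a self-contained proof.
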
 
The convergence is proven in the $L^1(\Omega)$ topology for some open, bounded Lipschitz domain $\Omega\subset \R^d$. Here, $\Per(\cdot;\rho)$ is a weighted version of the classical perimeter. The source condition mentioned provides minor regularity assumptions on the Bayes classifier. Note that in the referenced theorem, there are additional assumptions on the underlying data distribution $\rho$. In our work, we strengthen this convergence result by proving Hausdorff convergence of minimizers of \eqref{eqn:ATP} to the Bayes classifier with similar assumptions on $\rho$. 
\label{rem:atp related work}
\end{remark}


The second optimization problem which serves as an important model case interpolates between the accuracy on clean data of the Bayes classifier and the robustness of the adversarial training problem minimizers. The \textit{probabilistic adversarial training problem} for $p\in [0,1)$ and probability measures $\mathfrak p_x\in \mathcal P(\R^d)$ for each $x\in \R^d$ is 
\begin{equation} \inf_{A\in\cB(\R^d)} \E_{(x,y)\sim\mu}[|\mathds{1}_A(x)-y|]+\text{ProbPer}_p(A), \label{PATP} \end{equation} with the \textit{probabilistic perimeter} defined by
\begin{equation} 
\text{ProbPer}_p(A) \ceq w_0\rho_0(\Lambda_p^0(A)) + w_1\rho_1(\Lambda_p^1(A)), \label{prob perim} 
\end{equation} and the set functions $\Lambda_p^i$ for $i = 0,1$ defined by 
\begin{align}\Lambda_p^0(A) &\ceq \{x\in A^\comp:  \, \prob(x'\in A: x'\sim \mathfrak p_x)>p\},\\ 
\Lambda_p^1(A) &\ceq \{x\in A: \,\prob(x'\in A^\comp: x'\sim \mathfrak p_x)>p\}.\label{prob perim sets}\end{align} Here, $\prob(x'\in A: x'\sim \mathfrak p_x)$ is the probability that a point $x'$ sampled from the probability distribution $\mathfrak p_x$ belongs to the set $A$. We notice that \eqref{prob perim} takes the same form as \eqref{eps perim} where we replace the metric boundary fattening by a probabilistic fattening. We define the \textit{probabilistic adversarial classification risk} for a classifier $A\in\cB(\R^d)$ as \[J_p(A)\ceq \E_{(x,y)\sim\mu}[|\mathds{1}_A(x)-y|] + \ProbPer_{p}(A).\]
 The \textit{relative probabilistic perimeter} for a classifier $A\in \cB(\R^d)$ with respect to a set $E\in\cB(\R^d)$ is given by \begin{equation}\text{ProbPer}_p(A;E) \ceq w_0\rho_0(\Lambda_p^0(A)\cap E) + w_1\rho_1(\Lambda_p^1(A)\cap E).\label{rel prob perim}\end{equation} 

To make the connection with the $\varepsilon$-perimeter more concrete, we will restrict our attention to certain families of probability measures that scale appropriately with $\varepsilon$ for the remainder of this work. 

\begin{assumption}
Let $\xi: \R^d \to [0,\infty)$ such that $\xi\in L^1(\R^d)$, $\int_{\R^d}\xi(z) \, dz = 1$, $\xi(z) = 0$ if $|z| > 1$, and $\xi(z) > c$ for some constant $c>0$ and for $|z| \leq 1$. For $x,x' \in \R^d$, we assume that \[\mathfrak p_{x,\varepsilon}(x') = \varepsilon^{-d}\xi\left(\frac{x'-x}{\varepsilon}\right).\]
\label{p assumption}
\end{assumption} 

We will now write ProbPer$_{\varepsilon,p}$ and refer to it as the probabilistic $\varepsilon$-perimeter to emphasize the dependence on the adversarial budget.  Unlike with the Per$_\varepsilon$, we do not normalize ProbPer$_{\varepsilon,p}$ with respect to $\varepsilon$. We also write $J_{\e,p}$ instead of $J_p$ and $\Lambda_{\e,p}^i$ instead of $\Lambda_{p}^i$ for $i=0,1$. Under Assumption~\ref{p assumption}, $\Lambda_{\e,p}^0(A)$ and $\Lambda_{\e,p}^1(A)$ are subsets of the $\varepsilon$-perimeter regions $A^\varepsilon\setminus A$ and $A\setminus A^{-\varepsilon}$, respectively. Specifically, this means that $J_{\e,p}(A)\le J_{\e}(A)$ for all $A\in \cB(\R^d)$ when the underlying data distribution $\mu$ is the same. We note that probabilistic $\e$-perimeter that most closely coincides with the $\e$-perimeter when $p = 0$ and $\mathfrak p_{x,\varepsilon} = \text{Unif}(B_\dist(x,\e))$ for each $x\in \R^d$. 

\begin{remark}[Previous work for the probabilistic adversarial training problem \eqref{PATP}]

This form of the problem was proposed by \textcite{bungert2023begins} as a revision of  probabilistically robust learning   \cite{robey22a}. Although ProbPer$_p$ is not a perimeter in the sense that it had not been shown to be submodular and it does not admit a coarea formula, we follow the convention from \cite{bungert2023begins} and refer to ProbPer$_p$ as the probabilistic perimeter. Importantly, existence of minimizers has not been proved for either the original or modified probabilistic adversarial training problem. There have also been no results pertaining to the convergence of minimizers, provided they exist, to the Bayes classifier for either version. 

However, \textcite{bungert2023begins} propose and prove the existence of minimizers for a related probabilistically robust $\Psi$ risk
\begin{equation*}J_\Psi(A) \ceq \E_{(x,y)\sim \mu} [|\mathds{1}_A(x) - y|] + \ProbPer_{\Psi}(A) \end{equation*} for suitable functions $\Psi:[0,1]\to [0,1]$ where the $\Psi$-perimeter takes the form \begin{equation}\ProbPer_{\Psi}(A)\ceq \int_{A^c} \Psi(\prob(x'\in A:x'\sim \mathfrak p_{x})) d\rho_0(x) + \int_{A} \Psi(\prob(x'\in A^\comp:x'\sim \mathfrak p_{x})) d\rho_1(x). \label{eqn:psi per}\end{equation} However, the convergence results proved in this paper do \textit{not} currently extend to the $\Psi$-perimeter case. The details will be further discussed in Remark~\ref{rem:psi per}.  
\label{rem: prev patp}
\end{remark}

If we juxtapose the variational problem for the adversarial training problem \eqref{eqn:ATP} and the probabilistic adversarial training problem \eqref{PATP}, both risks are of the form
\[ J(A) = \text{ Bayes risk } + \text{ data-adapted perimeter, }\] where the data-adapted perimeters can be expressed as \[\text{data-adapted perimeter} = w_0\rho_0(\text{subset of } A^\comp) + w_1\rho_1(\text{subset of } A).\] We seek to develop a unifying framework for various adversarial models including, but not limited to, \eqref{eqn:ATP} and \eqref{PATP}. These types of attacks are designed to flexibly capture a range of adversarial behaviors, not just the idealized ones given in the original adversarial training problem. Under the proper assumptions, which will be discussed in  Sections~\ref{sec:framework} and \ref{sec:det conv}, we can extend the convergence result to a broad class of adversarial attacks. We begin by giving some concrete definitions. 

\begin{definition}
    For a classifier $A\in \cB(\R^d)$, we define the Lebesgue measurable function $\phi:\R^d \to \{0,1\}$ by
\[\phi(x;A) \coloneqq \begin{cases}
     1, & \text{if the adversary can perturb a data point } x \text{ from } A \text{ to } A^\comp \text{ or vice versa},\\
     0, & \text{otherwise}.
\end{cases}\]  We refer to $\phi$ as the \textit{deterministic attack function} with respect to the classifier $A$. 

\end{definition}

Deterministic refers to the fact that the classification risk is completely determined at any point $x\in\R^d$ by the choice of classifier and the associated attack function. We emphasize that this attack function does not consider the true label $y$ associated with $x$. 

In order to generalize the classification risk, it will be essential to isolate the sets where classification loss occurs. We can define the following set operators based on the values of $\phi$.
\begin{definition} 

    Let $A\in \cB(\R^d)$. For a deterministic attack function $\phi$, we define the set operators $\Lambda_{\phi}^i:\cB(\R^d)\to \cB(\R^d)$ and $ \tilde\Lambda^i_\phi:\cB(\R^d)\to \cB(\R^d)$ 
    for $i = 0,1$ by
    \[\Lambda^0_{\phi}(A) \ceq \{x\in A^\comp: \phi(x; A) = 1\},
     \qquad \Lambda^1_\phi(A) \ceq \{x\in A: \phi(x; A) = 1\},  \] 
    \[ \tilde\Lambda^0_\phi(A) \ceq \{x\in A^\comp: \phi(x; A) = 0\}, \qquad \tilde\Lambda^1_\phi(A) \ceq \{x\in A: \phi(x; A) = 0\}.  \]
    We refer to these four sets collectively as \textit{$\Lambda$-sets}. For convenience, we also define $\Lambda_\phi(A) = \Lambda_\phi^0(A)\cup \Lambda_\phi^1(A)$ and $\tilde{\Lambda}_\phi(A) = \tilde\Lambda_\phi^0(A)\cup \tilde\Lambda_\phi^1(A)$. Note the $0$ and $1$ superscripts indicate the label assigned by the classifier $A$ and \textit{not} the value of the deterministic attack function (i.e. $0$ corresponds to points in $A^\comp$ and $1$ corresponds to points in $A$).
    \label{def:lambda sets}
\end{definition}

The set $\Lambda_\phi(A)$ contains points that meet the attack criteria for the deterministic attack function $\phi$ whereas the set $\tilde{\Lambda}_\phi(A)$ contains points that do not meet the attack criteria. The $\Lambda$-sets are mutually disjoint with $A = \Lambda^1_\phi(A) \cup \tilde\Lambda^1_\phi(A)$ and $A^\comp = \Lambda^0_{\phi}(A) \cup \tilde\Lambda^0_\phi(A)$. 

We can express the classification risk for a set $A\in \cB(\R^d)$ by the loss on the attacked sets, given by $\Lambda_\phi(A)$, and by the loss inherent to the choice of classifier. More formally, we define the generalized classification risk as follows.

\begin{definition}
    The \textit{generalized classification risk} for a deterministic attack function $\phi$ and classifier $A\in \cB(\R^d)$ is given by 
    \begin{equation}J_\phi(A) \ceq w_0\rho_0(\Lambda_\phi^0(A) \cup A) + w_1\rho_1(\Lambda_\phi^1(A)\cup A^\comp). \label{Jphi}\end{equation}
\end{definition}

As in \cite{TRADESpaper}, we seek to separate the total classification risk $J_\phi$ into the standard Bayes risk (natural error) and the risk attributed to the adversary's attack.

\begin{definition}
    The \textit{adversarial deficit} for a classifier $A\in \cB(\R^d)$ and a deterministic attack function $\phi$ is defined to be \[D_\phi(A) \ceq J_\phi(A) - \E_{(x,y)\sim \mu}[|\mathds{1}_A(x) - y|],\] where $\E_{(x,y)\sim \mu}[|\mathds{1}_A(x) - y|]$ is the standard Bayes risk.
\end{definition}

As one can express the standard Bayes risk as \[\E_{(x,y)\sim \mu}[|\mathds{1}_A(x) - y|] = w_0\rho_0(A) + w_1\rho_1(A^\comp),\] we can derive a more useful equation for the adversarial deficit that mirrors the formulas for the data-adapted perimeters \eqref{eps perim} and \eqref{prob perim}, namely, 
\begin{equation}
    D_\phi(A) = w_0\rho_0(\Lambda^0_\phi(A))+ w_1\rho_1(\Lambda^1_\phi(A)). \label{eqn:rel adv def}
\end{equation}
Unlike the data-adapted perimeters we described above, at this stage $\Lambda_\phi(A)$ is \textit{not} necessarily in some neighborhood of the decision boundary. We define the relative adversarial deficit for a classifier $A\in\cB(\R^d)$ with respect to a set $E\in\cB(\R^d)$ to be \[ D_{\phi}(A;E) \ceq  w_0\rho_0(\Lambda^0_\phi(A)\cap E)+w_1\rho_1(\Lambda^1_\phi(A)\cap E). \]

With the appropriate definitions in place, we now present the generalized adversarial training problem for the deterministic attack function $\phi$. 

\begin{definition}
    For a deterministic attack function $\phi$, the \textit{generalized adversarial training problem} is given by \begin{equation}\inf_{A\in \cB(\R^d)} \E_{(x,y)\sim \mu}[|\mathds{1}_A(x) - y|] + D_\phi(A). \label{DATP}\end{equation}
\end{definition}

In the previous equation, the adversarial deficit, $D_\phi$, takes the place of the data-adapted perimeter terms from \eqref{eqn:ATP} and \eqref{PATP}. 

\begin{remark}
    By construction, the adversarial training problem \eqref{eqn:ATP} and the probabilistic adversarial training problem \eqref{PATP} are two examples that fall under this generalized attack function framework. For \eqref{eqn:ATP}, the \textit{$\e$-deterministic attack function} with respect to a classifier $A\in\cB(\R^d)$ for $\e > 0$ is
    \[\phi_\e(x;A)\ceq \begin{cases}
    1, & \text{if } \dist(x,\partial A) <\varepsilon,\\
    0, & \text{otherwise}.
\end{cases}\] For $\phi_\e$, we will let $\Lambda_\e^0(A) \ceq A^\e\setminus A$, $\Lambda_\e^1(A) \ceq A\setminus A^{-\e}$, $\tilde\Lambda^0_\e(A) \ceq A^\comp \setminus A^{\e}$, and  $\tilde\Lambda^1_\e(A) \ceq A^{-\e}$ denote the $\Lambda$-sets for convenience.

On the other hand for \eqref{PATP}, the \textit{$(\e,p)$-deterministic attack function} with respect to a classifier $A\in\cB(\R^d)$ for $\e > 0$ and $p\in[0,1)$ is \[ \phi_{\varepsilon, p}(x;A) = \begin{cases}
    1, & \text{if }\prob(\mathds{1}_A(x') \neq \mathds{1}_A(x)): x'\sim\mathfrak p_{x,\varepsilon}) > p,\\
    0, & \text{otherwise}.
\end{cases}\]
\label{rem: spec attack fns}
\end{remark}

\subsection{Informal Main Results and Discussion}\label{inf results}

We will focus the main results and discussion on the generalized adversarial training problem \eqref{DATP} and comment on the application to the adversarial training problem \eqref{eqn:ATP} and the probabilistic adversarial training problem \eqref{PATP} when appropriate. By Remark~\ref{rem: spec attack fns}, all statements pertaining to \eqref{DATP} automatically apply to \eqref{eqn:ATP} and \eqref{PATP}. However because \eqref{eqn:ATP} is sensitive to measure zero changes,  results for \eqref{eqn:ATP} are stronger than what can be stated in the generalized or probabilistic cases. On the other hand, the results for \eqref{PATP} are identical to those for \eqref{DATP} up to notation. 

The first crucial result for \eqref{DATP} provides an estimate on the relative adversarial deficit. 

\begin{proposition*} [(Informal) Energy Exchange Inequality for \eqref{DATP}]
 Under mild assumptions on $\phi$ (see Assumption~\ref{phi consistency}), for a classifier $A\in \cB(\R^d)$ and a set $E\in \cB(\R^d)$ such that $w_0\rho_0 - w_1\rho_1 > \delta > 0$ on $E$, if $J_\phi(A\setminus E) - J_\phi(A) \ge 0$, then \[D_{\phi}(A;E) \le D_{\phi}(E^\comp;A) - \delta\mathcal L^d(A\cap E) + w_0\rho_0(\widehat U_{11})+w_1\rho_1(\widehat U_{1})\label{eei inequal}\] where 
 $\widehat U_{1} \subset \tilde\Lambda_\phi^1(A) \cap \tilde\Lambda_\phi^0(E)$ and $\widehat U_{11} \subset {\Lambda_\phi^0}(A) \cap {\Lambda_\phi^1}(E)$. \label{prop: eei informal}\end{proposition*}

 \begin{figure}[h!]
    \centering
    \scalebox{.75}{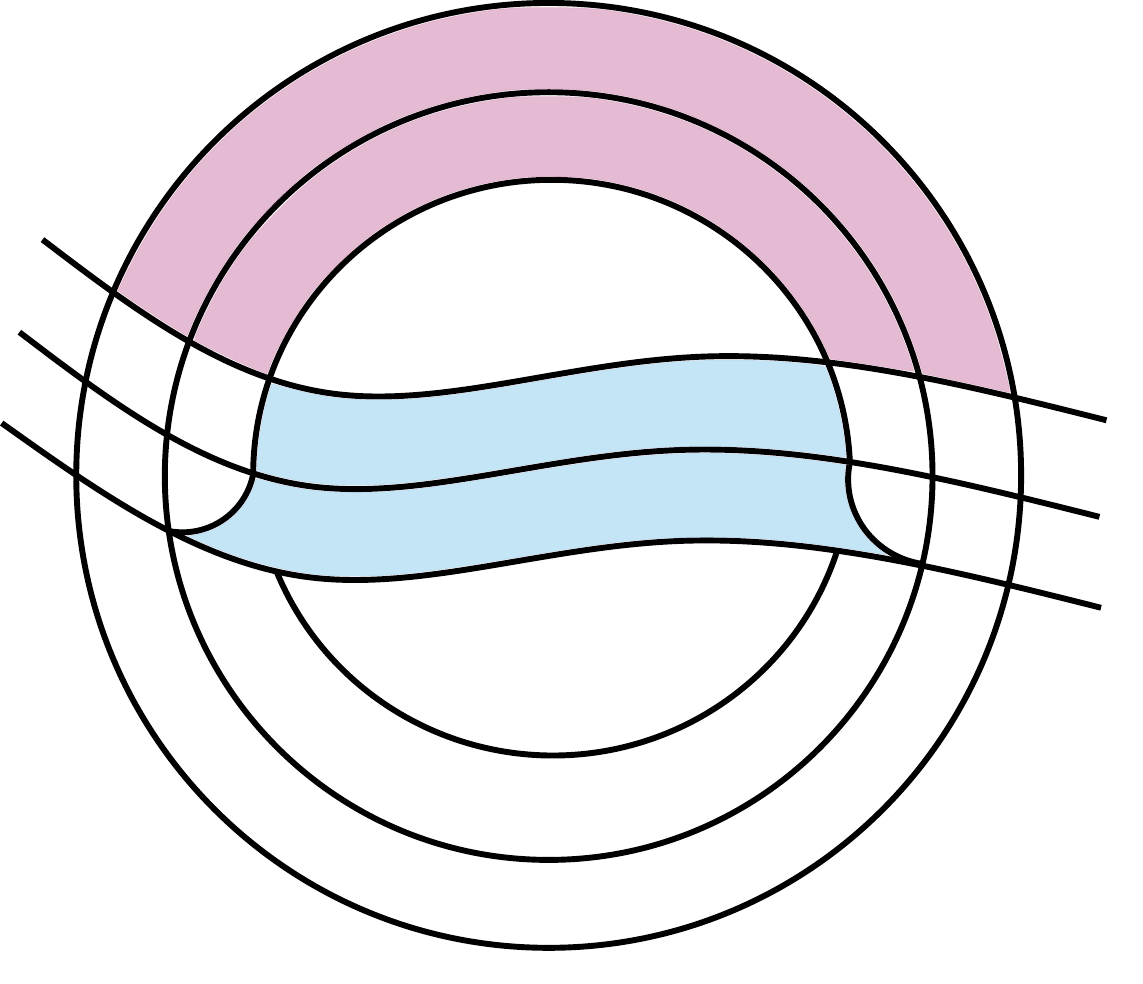}
    \caption{This diagram illustrates the sets present in the energy exchange inequality for the adversarial training problem \eqref{eqn:ATP} when $E = B_\dist(R)$. The sets comprising $\e\Per_{\varepsilon}(A;B_\dist(R))$ are shaded blue and purple whereas the sets comprising $\e\Per_{\varepsilon}(B_\dist(R)^\comp;A)$ are shaded pink and purple.}
    \label{fig:eperEEI}
\end{figure}

The energy exchange inequality asserts that, if it favorable according to the densities
to be labeled 0 on $E$ but adversarial training labels it 1, then the ``perimeter'' (more generally, the adversarial deficit) of the
original set $A$ must be quantifiably better in the sense of \eqref{eei inequal}. In spirit, the energy exchange inequality is connected to relative isoperimetric comparisons as it seeks to relate the relative adversarial deficits (or for \eqref{eqn:ATP} the relative $\e$-perimeters) of two sets to the volume of their intersection. However, the energy exchange inequality has additional error terms that must be accounted for. In the case of the stronger $\e$-perimeter, $\widehat U_{1} = \emptyset$ so the energy exchange inequality simplifies and can be expressed as follows.

\begin{proposition*} [(Informal) Energy Exchange Inequality for \eqref{eqn:ATP}]
For a classifier $A\in \cB(\R^d)$ and a set $E\in \cB(\R^d)$ such that $w_0\rho_0 - w_1\rho_1 > \delta > 0$ on $E$, if $J_\e(A\setminus E) - J_\e(A) \ge 0$, then \[\e\Per_\e(A;E) \le \e\Per_{\e}(E^\comp;A) - \delta\mathcal L^d(A\cap E) + w_0\rho_0(\widehat U_{11})\] where $\widehat U_{11} \subset  (A^\varepsilon\setminus A) \cap (E\setminus E^{-\varepsilon})$ (see Figure~\ref{fig:eperEEI}). \end{proposition*}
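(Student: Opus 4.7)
The plan is to use the given hypothesis $J_\e(A\setminus E)\ge J_\e(A)$ as a competitor inequality. Setting $B\ceq A\setminus E$, I would first compute the Bayes-risk difference directly: only the set $A\cap E$ changes label (from $1$ to $0$), so
\[\E_{(x,y)\sim\mu}[|\mathds 1_B(x)-y|]-\E_{(x,y)\sim\mu}[|\mathds 1_A(x)-y|]=-(w_0\rho_0-w_1\rho_1)(A\cap E)\le -\delta\mathcal L^d(A\cap E),\]
using the density assumption on $E$. Plugging this into $J_\e(B)\ge J_\e(A)$ and splitting $\e\Per_\e(A)=\e\Per_\e(A;E)+\e\Per_\e(A;E^\comp)$ yields
\[\e\Per_\e(A;E)+\delta\mathcal L^d(A\cap E)\le \e\Per_\e(B)-\e\Per_\e(A;E^\comp).\]
The remainder of the argument is to upper-bound $\e\Per_\e(B)$ by $\e\Per_\e(A;E^\comp)+\e\Per_\e(E^\comp;A)+w_0\rho_0(\widehat U_{11})$, after which the $\e\Per_\e(A;E^\comp)$ terms cancel to give the claim.

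For this upper bound, I would decompose the outer strip $B^\e\setminus B$ according to which of the three regions $A^\comp\cap E^\comp$, $A\cap E$, or $A^\comp\cap E$ the point lies in (the fourth region $A\cap E^\comp$ equals $B$ and contributes nothing). The uniform observation is that since $B\subset A\cap E^\comp$, every point $x\in B^\e\setminus B$ satisfies both $\dist(x,A)<\e$ and $\dist(x,E^\comp)<\e$. This forces points of type $A^\comp\cap E^\comp$ into $(A^\e\setminus A)\cap E^\comp$, i.e.\ the $w_0$-part of $\e\Per_\e(A;E^\comp)$; points of type $A\cap E$ into $(E\setminus E^{-\e})\cap A$, i.e.\ the $w_0$-part of $\e\Per_\e(E^\comp;A)$; and points of type $A^\comp\cap E$ into $(A^\e\setminus A)\cap(E\setminus E^{-\e})$. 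I would define $\widehat U_{11}$ to be exactly this last set $(B^\e\setminus B)\cap A^\comp\cap E$, which by construction lies in the required fattening of the $\partial A\cap\partial E$ region.

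The inner strip $B\setminus B^{-\e}$ is handled similarly using the identity $B^\comp=A^\comp\cup E$: each point $x\in B\setminus B^{-\e}$ lies within distance $\e$ either of $A^\comp$ or of $E$, so (combined with $x\in A\cap E^\comp$) it falls into $(A\setminus A^{-\e})\cap E^\comp$ or into $(E^\e\setminus E)\cap A$, i.e.\ the $w_1$-parts of $\e\Per_\e(A;E^\comp)$ and $\e\Per_\e(E^\comp;A)$ respectively. Crucially, no $w_1$ error set appears here, which is what makes the $\e$-perimeter version of the inequality tighter than the general one. Combining the outer- and inner-strip bounds yields the claimed upper bound for $\e\Per_\e(B)$, and substituting into the displayed inequality above completes the proof.

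The main technical obstacle is the careful bookkeeping in the outer-strip decomposition, especially verifying that every ``mixed'' point in $A^\comp\cap E$ close to $B$ is automatically near $\partial E$, so that $\widehat U_{11}$ sits in the thin strip $(A^\e\setminus A)\cap(E\setminus E^{-\e})$ rather than leaking into the bulk of $E^{-\e}$. This geometric constraint is what prevents the error term from scaling with the full volume of $A\cap E$ and is essentially what gives the inequality its sharp form.
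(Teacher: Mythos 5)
Your proof is correct. Every step checks out: the Bayes-risk difference is exactly $-(w_0\rho_0-w_1\rho_1)(A\cap E)\le-\delta\mathcal L^d(A\cap E)$; the additivity $\e\Per_\e(A)=\e\Per_\e(A;E)+\e\Per_\e(A;E^\comp)$ is immediate from \eqref{rel eps perim}; and the containments in your strip decompositions are all valid (in particular, $B=A\cap E^\comp$ forces every $x\in B^\e\setminus B$ to satisfy both $\dist(x,A)<\e$ and $\dist(x,E^\comp)<\e$, and $B^\comp=A^\comp\cup E$ gives the two-case split of the inner strip). Your $\widehat U_{11}=(B^\e\setminus B)\cap A^\comp\cap E=\{x\in A^\comp\cap E:\dist(x,A\setminus E)<\e\}$ coincides exactly with the set appearing in the paper's Corollary~\ref{eps energy inequality}.

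However, your route is genuinely different from the paper's. The paper proves a general energy exchange inequality (Proposition~\ref{generalInequality}) for any attack function satisfying the complement property and $\Lambda$-monotonicity, by partitioning $\R^d$ into the thirteen sets $U_1,\dots,U_{13}$ of Table~\ref{table:lambda}, expanding both risks over that partition, and cancelling terms; the $\e$-perimeter statement is then obtained as a corollary by verifying the assumptions and showing $\widehat U_1=\emptyset$ (a point in $A^{-\e}\cap(E^\e)^\comp$ satisfies $\dist(x,A^\comp\cup E)\ge\e$, hence is not attacked in $A\setminus E$). You instead work directly with the metric structure: you isolate the Bayes-risk change as a single closed-form term and bound $\e\Per_\e(A\setminus E)$ by elementary containments of the dilation and erosion strips of $A\setminus E$ inside those of $A$ and $E^\comp$. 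What you lose is generality — your argument does not transfer to the probabilistic or generalized attack functions, where the paper's abstract $\Lambda$-set bookkeeping is doing real work (and where the extra error term $w_1\rho_1(\widehat U_1)$ genuinely survives). What you gain is a self-contained, more transparent proof of the $\e$-perimeter case in which the absence of the $w_1$ error term appears structurally (the inner strip of $A\setminus E$ is exhausted by the two relative perimeters) rather than as an after-the-fact emptiness check.
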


As for the relative probabilistic perimeter $\ProbPer_{\e,p}$, the energy exchange inequality is the same as that for \eqref{DATP} up to notation.

\begin{proposition*} [(Informal) Energy Exchange Inequality for \eqref{PATP}]
 For a classifier $A\in \cB(\R^d)$ and a set $E\in \cB(\R^d)$ such that $w_0\rho_0 - w_1\rho_1 > \delta > 0$ on $E$, if $J_{\e,p}(A\setminus E) - J_{\e,p}(A) \ge 0$, then \[\ProbPer_{\e,p}(A;E) \le \ProbPer_{\e,p}(E^\comp;A) - \delta\mathcal L^d(A\cap E) + w_0\rho_0(\widehat U_{11})+w_1\rho_1(\widehat U_{1})\] where $\widehat U_{1} \subset (A\setminus A^{-\e})\cap (E^\e \setminus E)$ and $\widehat U_{11} \subset  (A^\varepsilon\setminus A) \cap (E\setminus E^{-\varepsilon})$. \end{proposition*}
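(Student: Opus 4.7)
The plan is to run exactly the same energy-balance argument as for the adversarial training problem \eqref{eqn:ATP}, but with the probabilistic attack criterion in place of the metric one, and to isolate precisely the region where the probabilistic and metric pictures diverge (giving the extra error term $w_1\rho_1(\widehat U_1)$).

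First, I would convert the energy hypothesis into a statement about the perimeter alone. Writing the standard Bayes risk as $w_0\rho_0(A) + w_1\rho_1(A^\comp)$ and using that for $x\in A\cap E$ removal flips the label from $1$ to $0$, the Bayes risk changes by $-(w_0\rho_0 - w_1\rho_1)(A\cap E) \le -\delta\,\mathcal L^d(A\cap E)$. Combining with $J_{\e,p}(A\setminus E) - J_{\e,p}(A)\ge 0$ yields
\[
\ProbPer_{\e,p}(A\setminus E) - \ProbPer_{\e,p}(A) \;\ge\; \delta\,\mathcal L^d(A\cap E).
\]
It then suffices to produce the complementary upper bound
\[
\ProbPer_{\e,p}(A\setminus E) - \ProbPer_{\e,p}(A) \;\le\; \ProbPer_{\e,p}(E^\comp;A) - \ProbPer_{\e,p}(A;E) + w_0\rho_0(\widehat U_{11}) + w_1\rho_1(\widehat U_1),
\]
after which subtracting the two inequalities gives the claim.

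To prove this upper bound, I would partition $\R^d$ into the four regions $A\cap E$, $A\setminus E$, $A^\comp\cap E$, $A^\comp\cap E^\comp$ and compare, pointwise on each region, the indicator that $x$ belongs to the $\Lambda$-sets of $A\setminus E$ with the indicators appearing in $\ProbPer_{\e,p}(A)$, $\ProbPer_{\e,p}(A;E)$, and $\ProbPer_{\e,p}(E^\comp;A)$. On $A\cap E$ the key inclusion $A\setminus E\subseteq E^\comp$ implies $\prob(x'\in A\setminus E)\le \prob(x'\in E^\comp)$, which makes the $w_0$-contribution from $\Lambda^0_{\e,p}(A\setminus E)$ absorbed by $\ProbPer_{\e,p}(E^\comp;A)$ with no error. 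On $A^\comp\cap E^\comp$ the attack condition only becomes harder (the target set shrinks from $A$ to $A\setminus E$), so the change is nonpositive. On $A^\comp\cap E$ (Region III) any point that newly satisfies $\prob(x'\in A\setminus E)>p$ must have $B(x,\e)$ hit both $A$ and $E^\comp$, placing it in $(A^\e\setminus A)\cap(E\setminus E^{-\e})$; this is exactly $\widehat U_{11}$.

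The main obstacle is Region II ($x\in A\setminus E$), which is where the probabilistic case is strictly more delicate than the metric case: here the attack criterion changes from $\prob(x'\in A^\comp)>p$ to $\prob(x'\in A^\comp\cup E)>p$, and the latter can exceed $p$ even when \emph{neither} $\prob(x'\in A^\comp)>p$ nor $\prob(x'\in E)>p$ holds — so the increase need not be charged to $\ProbPer_{\e,p}(E^\comp;A)$. The remedy is to define
\[
\widehat U_1 \;\ceq\; \{\,x\in A\setminus E : \prob(x'\in A^\comp\cup E)>p,\ \prob(x'\in A^\comp)\le p,\ \prob(x'\in E)\le p\,\}.
\]
Using that $\prob(x'\in A^\comp)=0$ whenever $x\in A^{-\e}$ and $\prob(x'\in E)=0$ whenever $x\notin E^\e$, I would check that $\widehat U_1\subseteq (A\setminus A^{-\e})\cap (E^\e\setminus E)$, as required. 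Summing the four regional estimates (with errors $w_0\rho_0(\widehat U_{11})$ and $w_1\rho_1(\widehat U_1)$) produces the desired upper bound and completes the proof; the metric case \eqref{eqn:ATP} is the specialization where Region II carries no error because the support condition in Assumption~\ref{p assumption} forces $\widehat U_1=\emptyset$.
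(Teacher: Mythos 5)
Your proof is correct, and your error sets $\widehat U_1$ and $\widehat U_{11}$ coincide exactly with the paper's. The route differs in packaging rather than in substance: the paper disposes of this statement as a two-line corollary (Corollary~\ref{lem: prob energy inequality}) of the general Proposition~\ref{generalInequality}, whose proof decomposes $\R^d$ into the thirteen sets $U_1,\dots,U_{13}$ of Table~\ref{table:lambda} (further refined by the value of $\phi(\cdot\,;A\setminus E)$) and tracks every term of $J_\phi(A\setminus E)-J_\phi(A)$ simultaneously; the only probability-specific work in the paper is checking the complement property and $\Lambda$-monotonicity in Remark~\ref{rem:verify assumption}. You instead peel off the Bayes risk up front, which isolates the $-\delta\,\mathcal L^d(A\cap E)$ term cleanly, and then compare the perimeters over the coarser four-region partition $A\cap E$, $A\setminus E$, $A^\comp\cap E$, $A^\comp\cap E^\comp$ using pointwise monotonicity of $\prob(x'\in\cdot)$. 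This buys a self-contained and more transparent argument — in particular your diagnosis that Region II is precisely where the probabilistic case departs from the metric one, and that the support condition in Assumption~\ref{p assumption} forces $\widehat U_1=\emptyset$ there for $\phi_\e$, matches the paper's Corollary~\ref{eps energy inequality} — at the cost of not yielding the statement for general $\phi$ satisfying Assumption~\ref{phi consistency}. Two details to make explicit in a full write-up: in Region I the $w_1$-terms $-w_1\rho_1(\Lambda^1_{\e,p}(A)\cap A\cap E)$ and $+w_1\rho_1(\Lambda^1_{\e,p}(A)\cap E)$ cancel exactly, and likewise the $w_0$-terms in Region III, so the \emph{entire} set $\{x\in A^\comp\cap E:\prob(x'\in A\setminus E)>p\}$ (not merely the newly attacked points) must be charged to the error — harmless, since that set is exactly $\widehat U_{11}$; and the containment $\widehat U_1\subset(A\setminus A^{-\e})\cap(E^\e\setminus E)$ requires combining $\prob(x'\in A^\comp\cup E)\le\prob(x'\in A^\comp)+\prob(x'\in E)$ with the vanishing of one summand, rather than either vanishing statement alone.
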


The energy exchange inequality allows us to argue that classifiers which are minimizers of the generalized adversarial training problem \eqref{DATP}, if they exist, can be made disjoint from sets where it is energetically preferable to be labeled 0 when the adversarial budget $\e$ is small enough. As we will see, in the generalized setting we can only guarantee the uniqueness of minimizers of \eqref{PATP} and \eqref{DATP} up to sets of measure zero; however, we can show that the intersection of such sets with an energetic preference for the label zero with minimizers must have $\mathcal L^d$ measure zero. For the adversarial training problem \eqref{eqn:ATP}, we can improve the result to show that any minimizer must be disjoint from these sets when $\e$ is small enough. This result builds towards proving uniform convergence of minimizers of \eqref{DATP} to the Bayes classifier, which is the next main result. In order to prove the convergence rate, we must include a non-degeneracy assumption to ensure $\dist_H(A_0^{\max},A_0^{\min}) = 0$ and the Bayes classifier is unique in the sense of Remark~\ref{rem:BC uniqueness}.

\begin{theorem*} [Informal] With mild assumptions on $\phi$, let $K$ be compact and let $\{A_{\e,\phi}\}_{\e>0}$ be any sequence of minimizers to the generalized adversarial training problem \eqref{DATP}. Assuming that $w_0 \rho_0-w_1\rho_1$ is non-degenerate, then 
\[\dist_H((A_{\varepsilon,\phi}\cup N_1\setminus N_2) \cap K, A_0\cap K) \to 0\]
as $\e\to0^+$, where $N_1,N_2$ are sets of $\mathcal L^d$ measure zero, $\dist_H$ is the Hausdorff distance, and $A_0$ is the Bayes classifier. 
\end{theorem*}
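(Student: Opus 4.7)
The plan is to upgrade the $L^1$-type control provided by the Energy Exchange Inequality into pointwise Hausdorff convergence by localizing with respect to the sign of $w_0\rho_0 - w_1\rho_1$, and absorbing the residual null intersections into the correcting sets $N_1, N_2$. Non-degeneracy is the crucial input that makes this localization possible.

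I would fix $\delta > 0$ and introduce the two ``bad'' regions $S_0 = \{x \in K : \dist(x, A_0) \geq \delta\}$ and $S_1 = \{x \in K : \dist(x, A_0^\comp) \geq \delta\}$. Since non-degeneracy implies (up to null sets) $A_0 = \{w_1\rho_1 \geq w_0\rho_0\}$ and forces $|w_0\rho_0 - w_1\rho_1|$ to be bounded below on compact sets disjoint from $\partial A_0$, there is $c = c(\delta) > 0$ with $w_0\rho_0 - w_1\rho_1 \geq c$ on a neighborhood of the compact set $S_0$ and $w_1\rho_1 - w_0\rho_0 \geq c$ on a neighborhood of $S_1$. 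I would then cover each $S_i$ by finitely many balls $E$ lying in the corresponding sign-region, choosing radii so that $\rho_i(\partial E) = 0$.

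For each ball $E$ in the cover of $S_0$, minimality of $A_{\e,\phi}$ gives $J_\phi(A_{\e,\phi}\setminus E) \geq J_\phi(A_{\e,\phi})$, so the Energy Exchange Inequality produces
\[
c\,\mathcal L^d(A_{\e,\phi}\cap E) \;\leq\; D_\phi(E^\comp; A_{\e,\phi}) + w_0\rho_0(\widehat U_{11}) + w_1\rho_1(\widehat U_1).
\]
The error sets $\widehat U_1, \widehat U_{11}$ sit inside an $\e$-tube around $\partial E$, whose $\rho_i$-mass vanishes as $\e \to 0$ because $\rho_i(\partial E) = 0$ and $\rho$ is Radon. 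The ``mild assumptions on $\phi$'' — which for $\phi_\e$ and $\phi_{\e,p}$ amount to the statement that the attack region of a classifier lies in an $\e$-neighborhood of its topological boundary — similarly force $D_\phi(E^\comp; A_{\e,\phi})$ into such a tube, so it also vanishes. Summing over the finite cover yields $\mathcal L^d(A_{\e,\phi}\cap S_0) \to 0$, and the symmetric argument with the labels $0$ and $1$ swapped in the EEI gives $\mathcal L^d(A_{\e,\phi}^\comp \cap S_1) \to 0$.

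Setting $N_2 = A_{\e,\phi}\cap S_0$ and $N_1 = S_1\setminus A_{\e,\phi}$ (both null for $\e$ sufficiently small) produces a modified classifier $(A_{\e,\phi}\cup N_1\setminus N_2)\cap K$ that is disjoint from $S_0$ (hence contained in $(A_0)^\delta\cap K$) and contains $S_1$; since every $x \in A_0\cap K$ lies within $\delta$ of $S_1$ by translating toward the interior of $A_0$, the Hausdorff distance is at most $\delta$ for $\e$ small, and diagonalizing over $\delta \to 0$ yields the claim. The main obstacle is bounding $D_\phi(E^\comp; A_{\e,\phi})$ uniformly, which demands the ``mild assumptions on $\phi$'' to confine the adversary's attack to a small neighborhood of $\partial E$; a secondary subtlety is that the null modifications $N_1, N_2$ are genuinely necessary in the general \eqref{DATP} setting because minimizers are only unique up to $\mathcal L^d$-null sets, whereas the stronger pointwise disjointness improvement available for \eqref{eqn:ATP} permits $N_1 = N_2 = \emptyset$ in the classical adversarial training case.
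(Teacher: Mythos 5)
There is a genuine gap at the heart of the proposal: a single application of the Energy Exchange Inequality to a ball $E$ in your cover of $S_0$ only yields
\[
c\,\mathcal L^d(A_{\e,\phi}\cap E) \;\le\; D_\phi(E^\comp;A_{\e,\phi}) + w_0\rho_0(\widehat U_{11}) + w_1\rho_1(\widehat U_{1}) \;\le\; C\e,
\]
since $D_\phi(E^\comp;A_{\e,\phi})\le \e\Per_\e(E^\comp;A_{\e,\phi})\le \alpha R^{d-1}\e$ and the error sets live in $\e$-tubes of mass $O(\e)$. This shows $\mathcal L^d(A_{\e,\phi}\cap S_0)\to 0$, which is an $L^1$-type statement already implicit in the $\Gamma$-convergence literature. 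But the theorem requires $N_2 = A_{\e,\phi}\cap S_0$ to have $\mathcal L^d$ measure \emph{exactly zero} for each fixed small $\e$; "tends to zero as $\e\to 0^+$" does not give you a null set to subtract, so your parenthetical "(both null for $\e$ sufficiently small)" is unjustified and the Hausdorff conclusion does not follow. This is precisely the hard step the paper's Lemma~\ref{general slices} is designed to handle: it bootstraps the crude $O(\e/\delta)$ volume bound through a $d$-fold iteration over nested annuli of width $2\e$ (choosing good radii $s_i$ by a pigeonhole/contradiction argument) to obtain $\mathcal L^d(A\cap B_\dist(R/2^d))\le C\e^{d+1}/(R\delta^{d+1})$, then invokes the quantitative lower-bound direction of Assumption~\ref{phi assumptions} (if $\mathcal L^d(A^\comp\cap B_\dist(z,\e))>\beta\e^d$ then $z$ is attacked) to show every point of $A$ in the inner ball lies in $\Lambda_\phi^1(A)$, and finally uses the exact energy identity of Corollary~\ref{cor:exact diff} together with minimality to force $\mathcal L^d(A\cap B_\dist(R/2^{d+2}))=0$. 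That lower-bound assumption on $\phi$ plays no role in your argument, which is a sign the mechanism that produces an exactly-null intersection is missing.

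The surrounding architecture of your proposal is otherwise aligned with the paper's: localize where $w_0\rho_0-w_1\rho_1$ has a definite sign using non-degeneracy and continuity, cover the bad region by finitely many balls, apply the EEI on each, run the symmetric argument for $S_1$ via the complement property, and convert the two-sided corralling into Hausdorff convergence by sending $\delta\to 0$. To repair the proof you would need to replace the single EEI application per ball with the iterative slicing argument (or some other mechanism that upgrades $O(\e)$ volume control to measure zero), after which your choice of $N_1,N_2$ and the final diagonalization go through as in Lemma~\ref{general l-inf conv} and Theorem~\ref{general hausdorff conv}.
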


However, the theorem actually proved is more general and does not require a unique Bayes classifier. Under these relaxed assumptions, we prove a corralling result for the sequence $\{A_{\varepsilon,\phi}\}_{\e > 0}$ with respect to the Hausdorff distance from the maximal Bayes classifier, $A_0^{\max}$, and the minimal Bayes classifier, $A_0^{\min}$. In essence, the corralling result states that the boundary of $\lim_{\e\to 0^+} A_{\varepsilon,\phi}\cup N_1\setminus N_2$ must lie between the boundaries of $A_0^{\max}$ and $A^{\min}_0$. When we specify this result to the adversarial training problem \eqref{eqn:ATP}, we no longer have to remove a $\mathcal L^d$ measure zero set and instead prove the following.

\begin{theorem*} [Informal] Let $K$ be compact and let $\{A_{\e}\}_{\e>0}$ be any sequence of minimizers to the  adversarial training problem \eqref{eqn:ATP}. Assuming that $w_0 \rho_0-w_1\rho_1$ is non-degenerate, then 
\[\dist_H(A_{\varepsilon} \cap K, A_0\cap K) \to 0\]
as $\e\to0^+$, where $\dist_H$ is the Hausdorff distance and $A_0$ is the Bayes classifier. 
\end{theorem*}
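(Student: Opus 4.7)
My plan is to combine the generalized Hausdorff convergence theorem (already available from the preceding results) with an ATP-specific regularity property that removes the need for the measure-zero correction sets $N_1, N_2$. Under the non-degeneracy assumption, the implicit function theorem applied to $\{w_0\rho_0 = w_1\rho_1\}$ yields a smooth $(d-1)$-dimensional hypersurface, and $A_0^{\max}, A_0^{\min}$ agree up to this set. They thus have the same topological closure and share a single Hausdorff limit, so we may unambiguously denote both by $A_0$.

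I would prove each direction of the Hausdorff inclusion by contradiction using the Energy Exchange Inequality for the ATP. For the outer direction, suppose along a subsequence $\e_n \to 0^+$ there are $x_n \in A_{\e_n} \cap K$ with $\dist(x_n, A_0 \cap K) \geq \delta_0 > 0$. Extract $x_n \to x^\star$ with $\dist(x^\star, A_0) > 0$, and use non-degeneracy and continuity to produce $r,\delta > 0$ such that $w_0\rho_0 - w_1\rho_1 > \delta$ on $E := B_\dist(x^\star, r)$ and $E$ is at positive distance from $A_0$. Since $A_{\e_n}$ is a minimizer, $J_{\e_n}(A_{\e_n} \setminus E) \geq J_{\e_n}(A_{\e_n})$, so the ATP version of the EEI gives
\[
\e_n \Per_{\e_n}(A_{\e_n}; E) + \delta \, \mathcal L^d(A_{\e_n} \cap E) \leq \e_n \Per_{\e_n}(E^\comp; A_{\e_n}) + w_0 \rho_0(\widehat U_{11}).
\]
Smoothness of $\partial E$ and boundedness of $\rho$ make both right-hand terms $O(\e_n)$, yielding $\mathcal L^d(A_{\e_n} \cap E) \to 0$. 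A symmetric argument applied to $A_{\e_n}\cup E$ on a set where $w_1\rho_1 - w_0\rho_0 > \delta$ handles the inner direction at the level of Lebesgue measure.

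At this stage the argument only reproduces the generalized result, which controls $A_\e$ up to measure-zero corrections $N_1, N_2$. To eliminate these for the ATP, I would establish a uniform density lower bound for ATP minimizers: there is $c>0$ with $\mathcal L^d(A_\e \cap B_\dist(x, c\e)) \geq c \e^d$ for every $x \in A_\e$, and the symmetric bound for $A_\e^\comp$. This reflects the ATP-specific fact that $\Per_\e$ depends on the set-theoretic representative: a point $x \in A_\e$ with vanishing local density could be excised (along with its measure-zero Lebesgue contribution) to strictly reduce $\Per_\e$ without altering the Bayes risk, contradicting minimality. Combined with $\mathcal L^d(A_{\e_n} \cap E) \to 0$, this density bound forces $x_n \notin A_{\e_n}$ for $n$ large, completing the outer contradiction; the inner direction is analogous.

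\textbf{Main obstacle.} The principal difficulty is the density lower bound. The EEI step is $L^1$ in nature and cannot distinguish representatives of the same equivalence class; the genuine strengthening to set-theoretic Hausdorff convergence relies on exploiting the sensitivity of $\Per_\e$ to measure-zero modifications. Formalizing the "no-dust" estimate -- managing both the dilation and erosion contributions at scales comparable to $\e$, and doing so uniformly in $\e$ -- is the main technical content beyond what the generalized framework provides.
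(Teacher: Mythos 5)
There is a genuine gap, and it sits exactly where you flag your ``main obstacle,'' but the problem is not only the unproved density bound --- it is a rate mismatch that makes your concluding step fail even if the density bound were granted. A single application of the energy exchange inequality on a fixed ball $E$ gives $\mathcal L^d(A_{\e_n}\cap E)\le C\e_n/\delta$, i.e.\ decay of order $\e_n$. Your proposed lower bound at a point $x_n\in A_{\e_n}$ is of order $\e_n^d$. For $d\ge 2$ we have $\e_n^d\ll \e_n$, so the two estimates are perfectly compatible and no contradiction forces $x_n\notin A_{\e_n}$. The paper's proof of Lemma~\ref{BR slices} exists precisely to fix this: it iterates the energy exchange inequality $d$ times over nested annuli of width $2\e$ inside $B(x,R)$, bootstrapping the volume bound from $O(\e)$ to $O(\e^{d+1})$ on a shrunk ball $B(x,R/2^d)$; only then does a lower bound of order $\e^d$ yield a contradiction. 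Without that iteration (or some substitute producing an $o(\e^d)$ upper bound), your outer-direction argument does not close.

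Separately, you have made the ``set-theoretic'' half of the argument harder than it needs to be. The paper never proves a density lower bound for $A_\e$ itself. It uses the $\e$-\emph{dilation}: if $z\in A_\e\cap B(x,R/2^{d+1})$ then $B(z,\e)\subset A_\e^\e$, so $\mathcal L^d(A_\e^\e\cap B(x,R/2^d))\ge\omega_\dist\e^d$ holds for \emph{any} set containing $z$, with no appeal to minimality and no ``no-dust'' estimate. Correspondingly, the iteration in Lemma~\ref{BR slices} is run on $\mathcal L^d(A^\e\cap B_\dist(s_i))$ rather than on $\mathcal L^d(A\cap B_\dist(s_i))$ alone, which requires also controlling $\mathcal L^d((A^\e\setminus A)\cap B_\dist(s_i))$ slice by slice --- a step absent from your sketch. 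This is the mechanism by which the ATP result becomes a genuine pointwise (not a.e.) statement, and it is why the paper proves the ATP case directly in Section~\ref{sec:conv atp} rather than, as you propose, deriving it from the generalized theorem and then excising null sets. Your high-level architecture (corral $A_\e$ between dilations/erosions of $A_0^{\max}$ and $A_0^{\min}$, then use non-degeneracy to identify the two) does match the paper's, but the core quantitative lemma is missing.
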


For the probabilistic adversarial training problem, the uniform convergence result states, 

\begin{theorem*} [Informal] Let $K$ be compact and let $\{A_{\e,p}\}_{\e>0}$ be any sequence of minimizers to the probabilistic adversarial training problem \eqref{PATP} for some fixed $p\in [0,1)$. Assuming that $w_0 \rho_0-w_1\rho_1$ is non-degenerate, then 
\[\dist_H((A_{\varepsilon,p}\cup N_1 \setminus N_2) \cap K, A_0\cap K) \to 0\]
as $\e\to0^+$, where $N_1,N_2$ are sets of $\mathcal L^d$ measure zero, $\dist_H$ is the Hausdorff distance, and $A_0$ is the Bayes classifier. 
\end{theorem*}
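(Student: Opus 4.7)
The plan is to obtain this statement as a specialization of the analogous theorem for the generalized problem \eqref{DATP} applied to the $(\e,p)$-deterministic attack function $\phi_{\e,p}$. The first step is verifying that $\phi_{\e,p}$ fits the framework. The essential property is localization near the decision boundary: since Assumption~\ref{p assumption} forces $\mathfrak p_{x,\e}$ to be supported in $B_\dist(x,\e)$, one has $\Lambda^0_{\e,p}(A) \subset A^\e \setminus A$ and $\Lambda^1_{\e,p}(A) \subset A \setminus A^{-\e}$ for every classifier $A$. This localization is what powers the energy exchange inequality for \eqref{PATP} and justifies identifying minimizers $A_{\e,p}$ of \eqref{PATP} with minimizers of \eqref{DATP} when $\phi = \phi_{\e,p}$, so the generalized corralling theorem applies.

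Concretely, for fixed $\delta > 0$ I would set $E^+_\delta = \{w_0\rho_0 - w_1\rho_1 > \delta\}$ and $E^-_\delta = \{w_1\rho_1 - w_0\rho_0 > \delta\}$. Testing $A_{\e,p} \setminus E^+_\delta$ against $A_{\e,p}$ gives $J_{\e,p}(A_{\e,p} \setminus E^+_\delta) - J_{\e,p}(A_{\e,p}) \geq 0$ by minimality, so the energy exchange inequality for \eqref{PATP} yields
\[\delta \, \mathcal L^d(A_{\e,p} \cap E^+_\delta) \leq \ProbPer_{\e,p}((E^+_\delta)^\comp; A_{\e,p}) + w_0\rho_0(\widehat U_{11}) + w_1\rho_1(\widehat U_{1}).\]
All three right-hand terms are controlled by probabilistic $\e$-perimeters concentrated in $\e$-neighborhoods of $\partial E^+_\delta$ and vanish as $\e \to 0^+$. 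A symmetric argument with $E^-_\delta$ forces $A_0^{\min} \setminus E^-_\delta$ to be essentially contained in $A_{\e,p}$. Together this corrals $A_{\e,p}$ between $A_0^{\min}$ and $A_0^{\max}$ up to sets of arbitrarily small Lebesgue measure.

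The final step upgrades this measure-theoretic trapping to Hausdorff convergence on the compact set $K$. Non-degeneracy of $w_0\rho_0 - w_1\rho_1$ forces the transition layer $\{|w_0\rho_0 - w_1\rho_1| \leq \delta\}$ to shrink to $\partial A_0$ at a quantitative rate in $\delta$. Combining this with a covering argument and choosing $\delta = \delta(\e) \to 0$ slowly enough, any point of $(A_{\e,p} \cup N_1 \setminus N_2) \cap K$ lies within a uniform distance of $A_0$ that vanishes with $\e$, and conversely, producing the desired Hausdorff bound.

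I expect the main obstacle to be the bookkeeping of the measure-zero sets $N_1$ and $N_2$. Unlike in the proof for \eqref{eqn:ATP}, where $\phi_\e$ depends only on the topological boundary of $A$ so null-set modifications are harmless, the probabilistic attack function $\phi_{\e,p}$ depends on $A$ measure-theoretically, and a null-set modification can change $\Lambda^i_{\e,p}(A)$ and hence the risk. Tracking exactly which null sets may be safely added or removed while preserving the energy exchange inequality is what forces the appearance of $N_1$ and $N_2$ in the statement, and is where the argument will need to be most delicate.
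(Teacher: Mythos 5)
You have correctly identified the high-level route (specialize the generalized framework to $\phi_{\e,p}$), but there is a genuine gap at the heart of the argument: your energy comparison against the full super-level set $E^+_\delta=\{w_0\rho_0-w_1\rho_1>\delta\}$ only yields $\delta\,\mathcal L^d(A_{\e,p}\cap E^+_\delta)\le o(1)$ as $\e\to 0^+$, i.e.\ an $L^1$-type volume bound. That kind of bound cannot be upgraded to Hausdorff convergence by a covering argument and a slow choice of $\delta(\e)$: nothing in it prevents $A_{\e,p}$ from containing a blob of tiny but positive measure deep inside $E^+_\delta$, far from $\partial A_0$, which would keep $\dist_H$ bounded away from zero. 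The paper's mechanism for excluding exactly this is the local iterative slicing argument of Lemma~\ref{general slices}: one works on a norm ball $B_\dist(x,2R)$ inside the wrong region, bootstraps the order-$\e$ volume bound through $d$ nested annular slicings to an order-$\e^{d+1}$ bound, then invokes the volume-threshold attack criterion of Assumption~\ref{phi assumptions} (verified for $\phi_{\e,p}$ in Proposition~\ref{verify assump prob} via the lower bound $\xi\ge c$, giving $\beta=p/c$) to show every point of $A_{\e,p}$ in the inner ball is attacked, and finally an exact energy computation (Corollary~\ref{cor:exact diff}) to force $\mathcal L^d(A_{\e,p}\cap B_\dist(x,R/2^{d+2}))=0$ exactly. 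This iteration, which you omit entirely, is the step that converts measure bounds into the set inclusions needed for Hausdorff control, and it is also where the null sets $N_1,N_2$ are actually constructed (as finite unions $\bigcup_i A_{\e,p}\cap B_\dist(x_i,R/2^{d+2})$ over a cover, in Lemma~\ref{general l-inf conv}) rather than merely being a bookkeeping nuisance.

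A secondary issue: you assert that $\ProbPer_{\e,p}((E^+_\delta)^\comp;A_{\e,p})$ and the error terms $w_0\rho_0(\widehat U_{11})$, $w_1\rho_1(\widehat U_{1})$ vanish because they live in an $\e$-neighborhood of $\partial E^+_\delta$. For a general super-level set this neighborhood can have non-vanishing measure (its boundary need not be rectifiable or even have finite perimeter under the stated hypotheses), so the claim is unjustified as written; the paper sidesteps this by only ever comparing against norm balls, for which Lemma~\ref{lem: eps per ball est} gives the clean bound $\alpha R^{d-1}\e$, and by absorbing $\widehat U_{11}$ into the left-hand deficit and localizing $\widehat U_{1}$ in $A\cap(B_\dist(R+\e)\setminus B_\dist(R))$ via Lemma~\ref{lem:phi isolation}. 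Your closing observation about why null-set modifications matter for $\phi_{\e,p}$ but not for $\phi_\e$ is accurate and matches the paper's discussion.
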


As with \eqref{DATP}, if we relax the assumption that the Bayes classifier is unique, we can instead prove an analogous corralling results with respect to $A_0^{\max}$ and $A_0^{\min}$ for \eqref{eqn:ATP} and \eqref{PATP}. 

With the non-degeneracy condition in place, we can also consider the rate of convergence and show that it is at most $O(\varepsilon^{\frac{1}{d+2}})$ for all three adversarial training problems. However, we do not expect this result to be optimal and would expect that the convergence rate to be $O(\e)$, which we discuss further in Remark~\ref{rem: conv rate}. 


\section{Energy Exchange Inequality}\label{sec:framework}

In this section, we will prove a quantitative result for the adversarial deficit, which can then be applied to the $\e$-perimeter and the probabilistic $\e$-perimeter. In order to do so, we will require the deterministic attack function $\phi$ and the corresponding $\Lambda$-sets to have the following structural properties.

\begin{assumption}
Recall Definition~\ref{def:lambda sets}. Let $A,E \in \mathcal B(\R^d)$. We will make the following two assumptions to ensure consistency with respect to complements and set difference:
\begin{enumerate}
\item Complement Property (CP): $\phi(x;A) = \phi(x;A^\comp)$, or in terms of $\Lambda$-sets,  $\Lambda_\phi^0(A) = \Lambda_\phi^1(A^\comp)$ and $\tilde\Lambda_\phi^0(A) = \tilde\Lambda_\phi^1(A^\comp)$. 
\item $\Lambda$-Monotonicity ($\Lambda$M): 
\begin{enumerate}
\item[(i)] If $x\in \tilde\Lambda_\phi^0(A)$, then $x\in \tilde\Lambda_\phi^0(A\setminus E)$.
\item[(ii)] If $x\in \tilde\Lambda_\phi^1(E)$, then $x\in \tilde\Lambda_\phi^0(A\setminus E)$.
\item[(iii)] If $x\in \Lambda_\phi^0(E)\cap A$, then $x\in \Lambda_\phi^1(A\setminus E)$.
\item[(iv)] If $x\in \Lambda_\phi^1(A)\cap E^\comp$, then $x\in \Lambda_\phi^1(A\setminus E)$. 
\end{enumerate}
\end{enumerate}
\label{phi consistency}
\end{assumption}

In the following series of remarks, we seek to better understand these two properties generally and as they apply to the adversarial and probabilistic adversarial settings. 

\begin{remark}[On Monotonicity]
    We note that the deterministic attack functions $\phi$ that satisfy Assumption~\ref{phi consistency} are \textit{not} monotonic with respect to set inclusion unless $\phi$ is the trivial attack function (i.e. $\phi \equiv 0$ or $\phi \equiv 1$). To illustrate this, suppose $\phi$ is monotonic. By the monotonicity of $\phi$ with respect to set inclusion coupled with the complement property,
    \[
    \phi(x;A) \stackrel{\rm (CP)}{=} \phi(x;A^\comp)\le \phi(x;(A\setminus E)^\comp)\stackrel{\rm (CP)}{=} \phi(x;A\setminus E)\le \phi(x;A).
    \]
    This implies $\phi(x;A) \equiv \phi(x;A\setminus E)$ and, if we let $E = A$, that $\phi(x;A)\equiv \phi(x;\emptyset)$. Hence, the attack is independent of $A$, which can only be satisfied by a trivial attack function. 

    Although $\phi$ itself is not monotonic, if you have a function $\psi$ which is monotonic in terms of set inclusion then setting $\phi$ via its level set yields an attack function which satisfies $\Lambda$-monotonicity. In particular, both the distance function and the probability function are monotonic. 
\end{remark}

\begin{remark}
We will verify that the adversarial training problem \eqref{eqn:ATP} and the probabilistic adversarial training problem \eqref{PATP} satisfy Assumption~\ref{phi consistency}. Recall from Remark~\ref{rem: spec attack fns}, the attack for \eqref{eqn:ATP} is denoted $\phi_\e$ and the attack for \eqref{PATP} is denoted $\phi_{\e,p}$  for some $\e > 0$ and $p\in [0,1)$.

We will first show that $\phi_\e$ satisfies Assumption~\ref{phi consistency}. For the complement property, recognize that since $\partial A = \partial (A^\comp)$, $\Lambda_\e^0(A) = \Lambda_\e^1(A^\comp)$ and $\tilde\Lambda^0_\e(A) = \tilde\Lambda^1_\e(A^\comp)$ by definition. As for $\Lambda$-monotonicity, we can verify these four statements directly.
\begin{enumerate}
    \item[(i)] If $x\in\tilde\Lambda^0_\e(A)$, then $d(x,A\setminus E)\ge d(x,A)\ge \e$ so $x\in \tilde\Lambda^0_\e(A\setminus E)$.
    \item[(ii)] If $x\in \tilde\Lambda^1_\e(E)$, then $d(x,A\setminus E)\ge d(x,E^\comp)\ge \e$ so $x\in \tilde\Lambda^0_\e(A\setminus E)$.
    \item[(iii)] If $x\in \Lambda^0_\e(E) \cap A$, then $d(x,(A\setminus E)^\comp)\le d(x,E)<\e$ so $x\in \Lambda^1_\e(A\setminus E)$.
    \item[(iv)] If $x\in \Lambda^1_\e(A)\cap E^\comp$, then $d(x,(A\setminus E)^\comp)\le d(x,A^\comp) < \e$ so $x\in \Lambda^1_\e(A\setminus E)$.
\end{enumerate}

   Now, we consider $\phi_{\e,p}$. By definition,\[ \Lambda_{\e,p}^1(A^\comp)= \{x\in A^\comp : \prob(x'\in (A^\comp)^\comp: x'\sim\mathfrak p_{x,\e}) > p\} = \Lambda_{\e,p}^0(A).\] Similarly, one can show $\tilde\Lambda^0_{\e,p}(A) = \tilde\Lambda^1_{\e,p}(A^\comp)$. Hence the complement property holds for $\phi_{\e,p}$. Now we consider $\Lambda$-monotonicity. To simplify notation, we let $\prob(x;A) \ceq \prob(x'\in A : x'\sim \mathfrak p_{x,\e})$. Examining each of the $\Lambda$-monotonicity properties, we find the monotonicity with respect to set inclusion of the probability function 
    \begin{itemize} 
       \item[(i)] If $x\in \tilde\Lambda^0_{\e,p}(A)$, then $\prob(x; A\setminus E) \le \prob(x;A)\le p$ so $x\in \tilde\Lambda^0_{\e,p}(A\setminus E)$.
       \item[(ii)] If $x\in \tilde\Lambda^1_{\e,p}(E)$, then $\prob(x;A\setminus E) \le \prob(x; E^\comp) \le p$ so $x\in \tilde\Lambda^0_{\e,p}(A\setminus E)$.
        \item[(iii)] If $x\in \Lambda_{\e,p}^0(E)\cap A$,  then $\prob(x;(A\setminus E)^\comp) \ge \prob(x; E) >p$ so $x\in \Lambda^1_{\e,p}(A\setminus E)$.
        \item[(iv)] If $x\in \Lambda_{\e,p}^1(A)\cap E^\comp$, then $\prob(x; (A\setminus E)^\comp) \ge \prob(x;A^\comp) >p$ so $x\in \Lambda^1_{\e,p}(A\setminus E)$.
    \end{itemize} 
    Thus, $\phi_{\e,p}$ satisfies $\Lambda$-monotonicity and Assumption~\ref{phi consistency}.

    \label{rem:verify assumption}
\end{remark}

\begin{table}[!ht]
\centering
\begin{tabular}{|c|c|}
\hline 
If $x\in U_i$ & Then for $\Lambda(A \backslash E)$ we have  \\
\hline 
$U_1\ceq\tilde\Lambda^1_\phi(A) \cap \tilde\Lambda^0_\phi(E)$ & N/A  \\
$U_2\ceq\tilde\Lambda^1_\phi(A) \cap {\Lambda^0_{\phi}}(E)$ & $x\in \Lambda^1_\phi(A\setminus E)$ \\ 
$U_3\ceq\tilde\Lambda^1_\phi(A) \cap {\Lambda^1_{\phi}}(E)$ & N/A  \\
$U_4\ceq\tilde\Lambda^1_\phi(A) \cap \tilde\Lambda^1_\phi(E)$ &  $x\in \tilde\Lambda_\phi^0(A\setminus E)$  \\ 
$U_5\ceq\Lambda^1_{\phi}(A) \cap \tilde\Lambda^1_\phi(E)$ &  $x\in \tilde\Lambda_\phi^0(A\setminus E)$ \\ 
$U_6\ceq \Lambda^1_{\phi}(A) \cap \Lambda^1_{\phi}(E)$ & N/A  \\
$U_7\ceq \Lambda^1_{\phi}(A) \cap \Lambda^0_{\phi}(E)$ & $x\in \Lambda_\phi^1(A\setminus E)$  \\ 
$U_8\ceq\Lambda^1_{\phi}(A) \cap \tilde\Lambda^0_\phi(E)$ & $x\in \Lambda_\phi^1(A\setminus E)$\\ 
$U_9\ceq{\Lambda^0_{\phi}}(A) \cap {\Lambda^0_{\phi}}(E)$ & N/A \\
$U_{10}\ceq{\Lambda^0_{\phi}}(A) \cap \tilde\Lambda^0_\phi(E)$ & N/A \\
$U_{11}\ceq\Lambda^0_{\phi}(A) \cap {\Lambda^1_{\phi}}(E)$ & N/A  \\
$U_{12}\ceq\Lambda^0_{\phi}(A) \cap \tilde\Lambda^1_\phi(E)$ & $x\in \tilde\Lambda_\phi^0(A\setminus E)$ \\
$U_{13}\ceq \tilde\Lambda^0_\phi(A)$ & $x\in \tilde\Lambda_\phi^0(A\setminus E)$ \\
\hline
\end{tabular}
\caption{This table defines the 13 $U_i$ sets and exhibits all possible conclusions about the $\Lambda$-sets for $A\setminus E$ based on the $\Lambda$-sets for $A$ and $E$ from $\Lambda$-monotonicity. This set decomposition, along with the further refinement in \eqref{eqn:U hat and tilde}, will be key in proving the energy exchange inequality.}
\label{table:lambda}
\end{table}

\begin{remark}[$\Lambda$-set Decompositions]
\label{rem:lambda decomp}
    
Under Assumption~\ref{phi consistency}, we may decompose $\R^d$ in terms of the $\Lambda$-sets for $A,E\in \cB(\R^d)$ according to $\Lambda$-monotonicity. In doing so, we define the sets $U_1,\dots, U_{13}$, which partition $\R^d$ (see Table~\ref{table:lambda} and Figure~\ref{fig:ePerregions}). 

For the sets $U_i$ where no conclusion can be made about $\phi(x;A\setminus E)$, we will further decompose them into two subsets based on the $\phi$ values, i.e. \begin{equation}\widetilde U_i = \{x\in U_i : \phi(x;A\setminus E) = 0\}, \ \ \widehat U_i = \{x\in U_i: \phi(x;A\setminus E) = 1\},\label{eqn:U hat and tilde}\end{equation} for $i = 1,3,6,9,10,$ and $11$. 

The auxiliary symbols are meant to help the reader group the terms. Notice that the $\widetilde U_i$ sets contain points that cannot be perturbed by the adversary into the other class for the classifier $A\setminus E$ in accordance with all $\tilde \Lambda$ sets also containing points that are unable to be attacked by the adversary. On the other hand, the $\widehat U_i$ sets contain only points that can be perturbed into the opposite class. 

With this decomposition, we can express the $\Lambda$-sets for $A\setminus E$ using the $U$ sets as follows:
\begin{align*}
\Lambda_{\phi}^0(A\setminus E) &= \widehat U_3\cup \widehat U_6\cup \widehat U_9\cup \widehat U_{10} \cup \widehat U_{11}, \\
\Lambda_{\phi}^1(A\setminus E) &= \widehat U_{1}\cup U_2\cup U_7\cup U_8,\\
\tilde\Lambda^0_\phi(A\setminus E) &= \widetilde U_{3}\cup U_4\cup U_5\cup \widetilde U_{6}\cup \widetilde U_{9}\cup \widetilde U_{10}\cup \widetilde U_{11}\cup U_{12}\cup U_{13}, \\
\tilde\Lambda^1_\phi(A\setminus E) &= \widetilde U_{1}.
\end{align*} 

Depending on extra structure imposed by the choice of $\phi$, sometimes we can conclude certain sets are empty. For example when $\phi = \phi_\e$ (see Remark~\ref{rem: spec attack fns}), we have $\widehat U_{1} = \emptyset, \widetilde U_{3} = \emptyset,$  and $\widetilde U_{10} =\emptyset$. In the case where such sets are unambiguous in terms of the values of $\phi(x;A\setminus E)$, we drop the hat or tilde notation. However, $U_6,U_9$ and $U_{11}$ still requires a finer decomposition. Note that generally $\widetilde U_6, \widetilde U_9 =\emptyset$, but when boundaries of $A$ and $E$ intersect at more than discrete points then these sets can be non-empty. When $\widetilde U_6, \widetilde U_9 = \emptyset$ (such as in Figure~\ref{fig:ePerregions}), we also drop the tilde notation and let $U_6 = \widehat U_6$ and $U_9 = \widehat U_9$. The claims made here are verified in Appendix~\ref{app:phi eps claims}
\label{rem:U sets}
 \end{remark}

\begin{figure}[!ht]
    \centering
    \scalebox{.75}{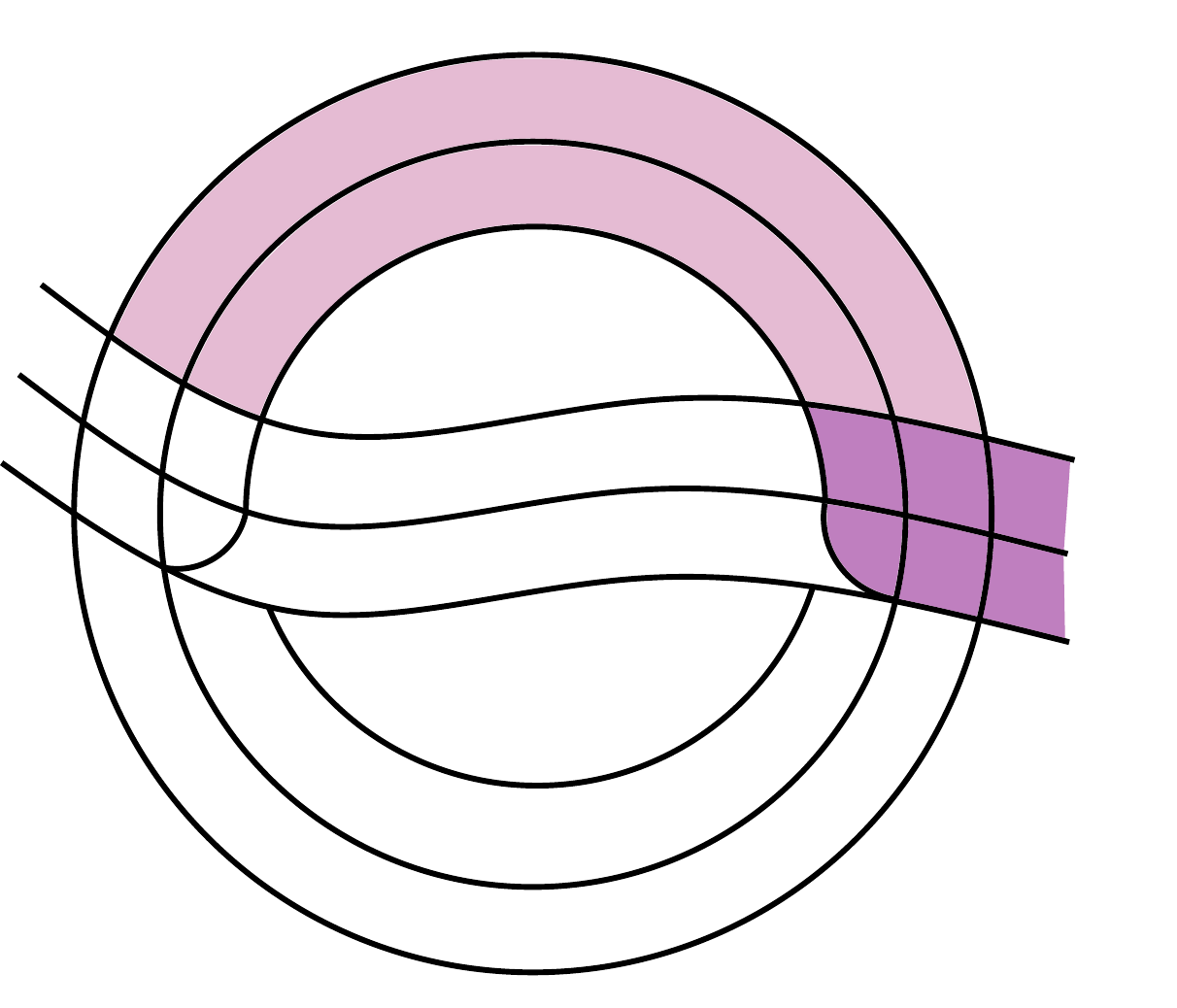}
    \caption{This diagram depicts the $U_i$ regions for the attack function $\phi_\eps$ associated with adversarial training problem \eqref{eqn:ATP}. The $\varepsilon$-perimeter regions of $A$ are shaded blue and purple whereas $\varepsilon$-perimeter regions of $A\setminus B_\dist(R)$ are shaded pink and purple. Note that some sets, such as $\widehat U_{1}$, are null sets for the $\e$-perimeter, and so do not appear in this figure.}
    \label{fig:ePerregions}
\end{figure}

Having stated our assumptions on $\phi$, we now turn to proving the first main result. In the following proposition, we examine the difference in energy between classifiers $A$ and $A\setminus E$ for $A,E\in \cB(\R^d)$ when $E$ belongs to a region where the label $0$ is energetically preferable according to the Bayes risk. We refer to the resulting inequality as the \textit{energy exchange inequality} because it quantifies the effect of removing the set $E$ from a classifier $A$ by examining the difference in risks. 

\begin{proposition}[Energy Exchange Inequality]
    Let $\phi$ be a deterministic attack function that satisfies Assumption~\ref{phi consistency}, let $A,E\in\cB(\R^d)$, and assume that $w_0\rho_0-w_1\rho_1 > \delta>0$ on $E$. If $J_\phi(A\setminus E) - J_\phi(A) \ge 0$, then
    \begin{equation} D_\phi(A;E) \le D_{\phi}(E^\comp;A) -\delta \mathcal L^d(A\cap E) + w_0\rho_0(\widehat U_{11}) + w_1\rho_1(\widehat U_{1}), \label{deficit inequality} \end{equation} where $U_{1}^1$ and $\widehat U_{11}$ are defined in Table~\ref{table:lambda}, namely $\widehat U_{1} = \{x\in \tilde\Lambda^1_\phi(A) \cap \tilde\Lambda^0_\phi(E): \phi(x;A\setminus E) = 1\}$ and $\widehat U_{11}= \{x\in {\Lambda_\phi^0}(A)\cap {\Lambda_\phi^1}(E):  \phi(x;A\setminus E) = 1\}$. 
    \label{generalInequality}
\end{proposition}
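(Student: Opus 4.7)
The strategy is to expand the difference $J_\phi(A\setminus E) - J_\phi(A)$ using the thirteen-set partition $\{U_i\}$ of Remark \ref{rem:U sets}, invoke the hypothesis $J_\phi(A\setminus E) \ge J_\phi(A)$ to turn this into a signed inequality among $\rho$-masses of the $U_i$, and then discard suitable non-negative terms so that what remains is exactly \eqref{deficit inequality}.

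First I would split $J_\phi = (\text{Bayes risk}) + D_\phi$. The Bayes-risk difference telescopes:
\[
\bigl[w_0\rho_0(A\setminus E) + w_1\rho_1((A\setminus E)^{\comp})\bigr] - \bigl[w_0\rho_0(A) + w_1\rho_1(A^{\comp})\bigr] = -w_0\rho_0(A\cap E) + w_1\rho_1(A\cap E) \le -\delta\mathcal{L}^d(A\cap E),
\]
using the density gap $w_0\rho_0 - w_1\rho_1 > \delta$ on $E$. For the adversarial part, I would apply the identities from Remark \ref{rem:U sets}: $\Lambda_\phi^0(A) = U_9\cup U_{10}\cup U_{11}\cup U_{12}$, $\Lambda_\phi^1(A) = U_5\cup U_6\cup U_7\cup U_8$, together with the hat-decorated analogues for $A\setminus E$ and the refinements $U_i = \widehat U_i\sqcup\widetilde U_i$ for $i\in\{1,3,6,9,10,11\}$. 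Cancellations on $U_7, U_8$ and simplifications on $U_9,U_{10},U_{11}$ reduce $D_\phi(A\setminus E) - D_\phi(A)$ to a signed sum, and combining with the Bayes piece and the hypothesis yields
\[
\delta\mathcal{L}^d(A\cap E) + w_0\bigl[\rho_0(\widetilde U_9)+\rho_0(\widetilde U_{10})+\rho_0(\widetilde U_{11})+\rho_0(U_{12})\bigr] + w_1\bigl[\rho_1(U_5)+\rho_1(U_6)\bigr] \le w_0\bigl[\rho_0(\widehat U_3)+\rho_0(\widehat U_6)\bigr] + w_1\bigl[\rho_1(\widehat U_1)+\rho_1(U_2)\bigr].
\]

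To match the advertised form I would identify $D_\phi(A;E) = w_0\rho_0(U_{11}\cup U_{12}) + w_1\rho_1(U_5\cup U_6)$ (using $U_{11},U_{12},U_5,U_6\subset E$) and, via the complement property $\Lambda_\phi^0(E^{\comp})=\Lambda_\phi^1(E)$, $D_\phi(E^{\comp};A) = w_0\rho_0(U_3\cup U_6) + w_1\rho_1(U_2\cup U_7)$. Writing $U_{11}=\widehat U_{11}\sqcup\widetilde U_{11}$ on the left, dropping the non-negative terms involving $\widetilde U_9,\widetilde U_{10}$, and using $\widehat U_3\subset U_3,\;\widehat U_6\subset U_6$ and $w_1\rho_1(U_7)\ge0$ on the right, the displayed inequality rearranges exactly into \eqref{deficit inequality}. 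The main obstacle is purely combinatorial: verifying via $\Lambda$-monotonicity which $U_i$'s feed unambiguously into the $\Lambda$-sets of $A\setminus E$ versus which require a hat/tilde refinement, and confirming that the sets $\widehat U_1$ and $\widehat U_{11}$—precisely the cases where Assumption \ref{phi consistency} does not pin down the sign of the adversarial contribution—are the unavoidable error terms that survive.
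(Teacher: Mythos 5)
Your proposal is correct and follows essentially the same route as the paper: both expand $J_\phi(A\setminus E)-J_\phi(A)$ over the thirteen-set partition of Table~\ref{table:lambda} with the hat/tilde refinement, identify $D_\phi(A;E)=w_0\rho_0(U_{11}\cup U_{12})+w_1\rho_1(U_5\cup U_6)$ and $D_\phi(E^\comp;A)=w_0\rho_0(U_3\cup U_6)+w_1\rho_1(U_2\cup U_7)$, and discard the same non-negative terms to isolate the error sets $\widehat U_1$ and $\widehat U_{11}$. The only difference is organizational (you separate the Bayes-risk telescoping from the deficit bookkeeping, whereas the paper expands $J_\phi$ wholesale), and your intermediate signed inequality is consistent with the paper's exact-difference formula in Corollary~\ref{cor:exact diff}.
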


\begin{proof}
By \eqref{Jphi}, we have 
\begin{align*} J_\phi(A) &= w_0\rho_0(A\cup\Lambda_\phi^0(A)) + w_1\rho_1(A^\comp\cup \Lambda_\phi^1(A)),\\
J_\phi(A\setminus E) &=  w_0\rho_0((A\setminus E)\cup \Lambda_\phi^0(A\setminus E)) + w_1\rho_1((A\setminus E)^\comp\cup \Lambda_\phi^1(A\setminus E)).\end{align*}

Based on Remark~\ref{rem:U sets} with further details shown in Appendix~\ref{app:Lambda sets}, we can express $A\cap E$ and the sets comprising $J_\phi(A\setminus E)$ as
\begin{align}
A\cap E &= U_3\cup U_4\cup U_5 \cup U_6,\label{eqn:intersection}\\
\Lambda_{\phi}^0(A\setminus E) &= \widehat U_3\cup \widehat U_6\cup \widehat U_9\cup \widehat U_{10} \cup \widehat U_{11}, \\
\Lambda_{\phi}^1(A\setminus E) &= \widehat U_{1}\cup U_2\cup U_7\cup U_8,\\
A\setminus E &= U_1\cup U_2\cup U_7\cup U_8, \\
(A\setminus E)^\comp &= U_3\cup U_4\cup U_5\cup U_6\cup U_9\cup U_{10}\cup U_{11}\cup U_{12}\cup U_{13}.
\end{align}

We can write the adversarial deficit terms as
\begin{align}
    D_\phi(A;E)&= w_0\rho_0(U_{11}\cup U_{12}) + w_1\rho_1(U_5\cup U_6), \label{Dphi1} \\ 
    D_\phi(E^\comp;A)&= w_0\rho_0(U_{3}\cup U_{6}) + w_1\rho_1(U_2\cup U_7).
    \label{Dphi2}
\end{align}

Then we estimate, 
\begin{align*}
J_\phi(A\setminus E) -J_\phi(A) &=  w_0\rho_0(U_1\cup U_2\cup \widehat U_3 \cup \widehat U_6\cup U_7\cup U_8\cup \widehat U_9 \cup \widehat U_{10}\cup \widehat U_{11}) \\ 
&+ w_1\rho_1(\widehat U_{1}\cup U_2 \cup U_3 \cup U_4 \cup U_5\cup U_6\cup U_7\cup U_8\cup U_9\cup U_{10}\cup U_{11}\cup U_{12}\cup U_{13})\\
&-w_0\rho_0(U_1\cup U_2\cup U_3\cup U_4 \cup U_5\cup U_6\cup  U_7 \cup U_8\cup U_9 \cup U_{10}\cup U_{11}\cup U_{12}) \\ 
&- w_1\rho_1(U_5\cup U_6\cup U_7\cup U_8\cup U_9\cup U_{10}\cup U_{11}\cup U_{12}\cup U_{13})\\
&\le w_0\rho_0(\cancel{U_1}\cup \cancel{U_2}\cup \uuline{U_3} \cup \uuline{U_6}\cup \cancel{U_7}\cup \cancel{U_8}\cup {\cancel{\widehat U_9}} \cup \cancel{\widehat U_{10}}\cup \widehat U_{11}) \\ 
&+ w_1\rho_1(\widehat U_{1}\cup \uuline{U_2} \cup \uwave{U_3}\cup \uwave{U_4} \cup \uwave{U_5}\cup \uwave{U_6}\cup \uuline{U_7}\cup \cancel{U_8}\cup \cancel{U_9}\cup \cancel{U_{10}}\cup \cancel{U_{11}}\cup \cancel{U_{12}}\cup \cancel{U_{13}})\\
&-w_0\rho_0(\cancel{U_1}\cup \cancel{U_2}\cup \uwave{U_3}\cup \uwave{U_4} \cup \uwave{U_5}\cup \uwave{U_6}\cup  \cancel{U_7} \cup \cancel{U_8} \cup (\widetilde U_{9} \cup \cancel{\widehat U_9})  \cup (\widetilde U_{10} \cup \cancel{\widehat U_{10}}) \cup \uline{U_{11}}\cup \uline{U_{12}}) \\ 
&- w_1\rho_1( \uline{U_5}\cup \uline{U_6}\cup U_7\cup \cancel{U_8}\cup \cancel{U_9}\cup \cancel{U_{10}}\cup \cancel{U_{11}}\cup \cancel{U_{12}}\cup \cancel{U_{13}})\\
&\le \uuline{D_{\phi}(E^\comp;A)} - \uline{D_{\phi}(A;E)} - (w_0\rho_0 - w_1\rho_1)\uwave{(A\cap E)} + w_0\rho_0(\widehat U_{11}) + w_1\rho_1(\widehat U_{1}).
\end{align*}
In the last line, the inequality results from neglecting all remaining terms with a negative sign. As $J_\phi(A\setminus E) - J_\phi(A) \ge 0$ and $w_0\rho_0 - w_1\rho_1 > \delta > 0$ on $E$, we estimate
\begin{align*}
D_{\phi}(A;E) &\le D_{\phi}(E^\comp;A) - (w_0\rho_0 - w_1\rho_1){(A\cap E)} + w_0\rho_0(\widehat U_{11}) + w_1\rho_1(\widehat U_{1})\\
&< D_{\phi}(E^\comp;A) - \delta\mathcal L^d(A\cap E) + w_0\rho_0(\widehat U_{11}) + w_1\rho_1(\widehat U_{1}).
\end{align*}
\end{proof}

Observe that if $A\in\cB(\R^d)$ is a minimizer of $J_\phi$ for some deterministic attack function $\phi$, then $J_\phi(A\setminus E) - J_\phi(A) \ge 0$ for any $E\in \cB(\R^d)$ and Proposition~\ref{generalInequality} applies. This will be the setting for our results, although we state the result in its most general form here.

In later energy arguments, it will be helpful to express the difference in classification risks exactly instead of combining terms to form $D_\phi(A;E), D_\phi(E^\comp;A)$, and $\mathcal L^d(A\cap E)$. In Corollary~\ref{cor:exact diff}, we consider the same computation for $J_\phi(A\setminus E)-J_\phi(A)$ but now aim to simplify the difference as much as possible.


\begin{corollary}
Let $A\in\cB(\R^d)$ be a classifier for the generalized adversarial training problem and let $E \in \cB(\R^d)$.
Then using the same notation as in Proposition~\ref{generalInequality} and under the same assumptions,  \begin{align*}J_\phi(A\setminus E) - J_\phi(A) &= w_1\rho_1(\widehat U_{1}\cup U_2\cup \widehat U_3) - (w_0\rho_0-w_1\rho_1)(\widetilde U_{3}\cup U_4) \\
&- w_0\rho_0(U_5 \cup \widetilde U_{6} \cup \widetilde U_{9}\cup \widetilde U_{10} \cup \widetilde U_{11}\cup U_{12}).\end{align*}
    \label{cor:exact diff}
\end{corollary}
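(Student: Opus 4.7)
The plan is to revisit the expansion of $J_\phi(A\setminus E) - J_\phi(A)$ derived at the top of the proof of Proposition~\ref{generalInequality}, but to carry out the cancellations exactly rather than upper-bounding. That first displayed equality in the proof of Proposition~\ref{generalInequality} is a genuine equality: it uses only the partition of $\R^d$ into the sets $U_1,\dots,U_{13}$ from Table~\ref{table:lambda}, together with the identities for $(A\setminus E) \cup \Lambda_\phi^0(A\setminus E)$ and $(A\setminus E)^\comp \cup \Lambda_\phi^1(A\setminus E)$ in Remark~\ref{rem:U sets}, and it does not invoke either the density hypothesis or the assumption $J_\phi(A\setminus E)\ge J_\phi(A)$. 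So I would reuse that expansion verbatim as the starting point.

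The only inequalities introduced in the proof of Proposition~\ref{generalInequality} came from two sources: first, replacing $\widehat U_i$ on the positive $w_0\rho_0$ side by the larger set $U_i$ (for $i=3,6,9,10$) in order to cancel against $-U_i$ on the negative side, and second, discarding the surviving negative residuals at the end. For the corollary I would do neither. Since the $U_i$ are pairwise disjoint and $U_i = \widehat U_i \cup \widetilde U_i$ with $\widehat U_i \cap \widetilde U_i = \emptyset$ for $i \in \{1,3,6,9,10,11\}$, every cancellation can be performed exactly and tracked additively. On the $w_0\rho_0$ side, $U_1, U_2, U_7, U_8$ cancel completely, each pair $\widehat U_i - U_i$ collapses to $-\widetilde U_i$ for $i \in \{3,6,9,10,11\}$, and $-U_4, -U_5, -U_{12}$ survive untouched; on the $w_1\rho_1$ side, all of $U_5,\dots,U_{13}$ cancel and $\widehat U_1, U_2, U_3, U_4$ remain with a plus sign.

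To bring the result into the form stated in the corollary, I would split the surviving $w_1\rho_1(U_3)$ contribution as $w_1\rho_1(\widehat U_3) + w_1\rho_1(\widetilde U_3)$ using the disjoint decomposition $U_3 = \widehat U_3 \cup \widetilde U_3$, and then pair the $w_1\rho_1(\widetilde U_3 \cup U_4)$ piece with the $-w_0\rho_0(\widetilde U_3 \cup U_4)$ piece to produce the single $-(w_0\rho_0 - w_1\rho_1)(\widetilde U_3 \cup U_4)$ summand. What remains on the $w_1\rho_1$ side is $w_1\rho_1(\widehat U_1 \cup U_2 \cup \widehat U_3)$ and on the $w_0\rho_0$ side is $w_0\rho_0(U_5 \cup \widetilde U_6 \cup \widetilde U_9 \cup \widetilde U_{10} \cup \widetilde U_{11} \cup U_{12})$, matching the statement. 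This is pure bookkeeping, and I do not anticipate any real obstacle: all the structural input (the partition $\{U_i\}$, the $\widehat/\widetilde$ refinement, and the identities for the $\Lambda$-sets of $A\setminus E$) is inherited from Remark~\ref{rem:U sets}, and the corollary simply refuses to discard or coarsen terms as was done to produce the inequality of Proposition~\ref{generalInequality}.
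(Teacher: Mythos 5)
Your proposal is correct and is essentially identical to the paper's own proof: both start from the exact $U_i$-expansion of $J_\phi(A\setminus E)-J_\phi(A)$, perform the cancellations exactly using $U_i=\widehat U_i\cup\widetilde U_i$, and then split $U_3$ and regroup $\widetilde U_3\cup U_4$ to form the $-(w_0\rho_0-w_1\rho_1)$ term. Your observation that the identity needs neither the sign hypothesis nor the density bound is accurate, though harmless given that the corollary inherits those assumptions anyway.
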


\begin{proof}
Let all sets $U_i, \widehat U_i, \widetilde U_i$ be as defined in Table~\ref{table:lambda} and \eqref{eqn:U hat and tilde}. We compute the exact difference in energies as follows:
\begin{align*} J_\phi(A\setminus E) -J_\phi(A) &= w_0\rho_0(\cancel{U_1}\cup \cancel{U_2}\cup \cancel{\widehat U_3} \cup \cancel{\widehat U_6}\cup \cancel{U_7}\cup \cancel{U_8}\cup \cancel{\widehat U_9} \cup \cancel{\widehat U_{10}}\cup \cancel{\widehat U_{11}}) \\ 
&+ w_1\rho_1(\widehat U_{1}\cup U_2 \cup U_3 \cup U_4 \cup \cancel{U_5}\cup \cancel{U_6}\cup \cancel{U_7}\cup \cancel{U_8}\cup \cancel{U_9}\cup \cancel{U_{10}}\cup \cancel{U_{11}}\cup \cancel{U_{12}}\cup \cancel{U_{13}})\\
&-w_0\rho_0(\cancel{U_1}\cup \cancel{U_2}\cup (\widetilde U_{3} \cup \cancel{\widehat U_3}) \cup U_4 \cup U_5\cup (\widetilde U_{6} \cup \cancel{\widehat U_6}) \cup  \cancel{U_7} \cup \cancel{U_8}\cup (\widetilde U_{9} \cup \cancel{\widehat U_9}) \ldots \\
&\ \ \ \ \ \ \ \ldots \cup (\widetilde U_{10} \cup \cancel{\widehat U_{10}}) \cup (\widetilde U_{11} \cup \cancel{\widehat U_{11}}) \cup U_{12}) \\ 
&- w_1\rho_1(\cancel{U_5}\cup \cancel{U_6}\cup \cancel{U_7}\cup \cancel{U_8}\cup \cancel{U_9}\cup \cancel{U_{10}}\cup \cancel{U_{11}}\cup \cancel{U_{12}}\cup \cancel{U_{13}})\\
&= w_1\rho_1(\widehat U_{1} \cup U_2 \cup \widetilde U_{3} \cup \widehat U_3 \cup U_4)\\
&- w_0\rho_0(\widetilde U_{3} \cup U_4\cup U_5 \cup \widetilde U_{6} \cup \widetilde U_{9}\cup \widetilde U_{10} \cup \widetilde U_{11}\cup U_{12})\\
&=w_1\rho_1(\widehat U_{1}\cup U_2\cup \widehat U_3) - (w_0\rho_0-w_1\rho_1)(\widetilde U_{3}\cup U_4) \\
&- w_0\rho_0(U_5 \cup \widetilde U_{6} \cup \widetilde U_{9}\cup \widetilde U_{10} \cup \widetilde U_{11}\cup U_{12}). 
\end{align*}
\end{proof}

In the following pair of corollaries, we will apply Proposition~\ref{generalInequality} to the adversarial training problem \eqref{eqn:ATP} and the probabilistic adversarial training problem \eqref{PATP}. 

\begin{corollary}
    Let $\varepsilon >0$ and $\phi = \phi_\varepsilon$. Let $A,E\in\cB(\R^d)$ such that $w_0\rho_0-w_1\rho_1 > \delta>0$ on $E$ and $J_{\varepsilon}(A\setminus E) - J_{\varepsilon}(A) \ge 0$. Then  \begin{equation} \varepsilon \Per_{\varepsilon}(A;E) \le \varepsilon \Per_{\varepsilon}(E^\comp;A) - \delta \mathcal L^d(A\cap E) + w_0\rho_0(\widehat U_{11}), \label{eps inequal} \end{equation}
    where $\widehat U_{11}= \{x\in A^\mathsf{c}\cap E: \dist(x, A\setminus E)<\varepsilon\}$.
    \label{eps energy inequality}
\end{corollary}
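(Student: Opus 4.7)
The plan is to apply Proposition~\ref{generalInequality} directly with $\phi = \phi_\varepsilon$, and then translate the outcome into the $\varepsilon$-perimeter notation. The hypotheses of the proposition are available because Remark~\ref{rem:verify assumption} establishes that $\phi_\varepsilon$ satisfies Assumption~\ref{phi consistency}, and the hypothesis $J_\varepsilon(A\setminus E) - J_\varepsilon(A) \ge 0$ is identical to the hypothesis $J_\phi(A\setminus E) - J_\phi(A) \ge 0$ of the proposition when $\phi = \phi_\varepsilon$.

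Next, I would identify the adversarial-deficit terms with relative $\varepsilon$-perimeters. From the definitions $\Lambda^0_\varepsilon(A) = A^\varepsilon \setminus A$ and $\Lambda^1_\varepsilon(A) = A \setminus A^{-\varepsilon}$ (Remark~\ref{rem: spec attack fns}), combining \eqref{eqn: rel adv def} for the relative adversarial deficit with \eqref{rel eps perim} gives
\[
D_{\phi_\varepsilon}(A;E) = w_0 \rho_0((A^\varepsilon\setminus A) \cap E) + w_1 \rho_1((A\setminus A^{-\varepsilon}) \cap E) = \varepsilon \Per_\varepsilon(A;E),
\]
and analogously $D_{\phi_\varepsilon}(E^\comp;A) = \varepsilon \Per_\varepsilon(E^\comp;A)$. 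This converts the left-hand side of \eqref{deficit inequality} into $\varepsilon \Per_\varepsilon(A;E)$ and the first right-hand term into $\varepsilon \Per_\varepsilon(E^\comp;A)$.

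The key simplification is that the error term $w_1 \rho_1(\widehat U_1)$ vanishes for the attack function $\phi_\varepsilon$, as asserted in Remark~\ref{rem:U sets}. I would briefly justify this: if $x \in U_1 = \overline{\Lambda^1_{\phi_\varepsilon}}(A) \cap \overline{\Lambda^0_{\phi_\varepsilon}}(E) = A^{-\varepsilon} \cap (E^\comp \setminus E^\varepsilon)$, then $\dist(x,A^\comp) \ge \varepsilon$ and $\dist(x,E) \ge \varepsilon$; since $\partial(A\setminus E) \subset \partial A \cup \partial E$, this forces $\dist(x,\partial(A\setminus E)) \ge \varepsilon$, so $\phi_\varepsilon(x; A\setminus E) = 0$ and $\widehat U_1 = \emptyset$.

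Finally, I would reconcile the two descriptions of $\widehat U_{11}$. By Table~\ref{table:lambda} and \eqref{eqn:U hat and tilde},
\[
\widehat U_{11} = \{x \in (A^\varepsilon \setminus A) \cap (E \setminus E^{-\varepsilon}) : \phi_\varepsilon(x; A\setminus E) = 1\}.
\]
For $x$ in this set, $x \in A^\comp \cap E$ and $\phi_\varepsilon(x; A\setminus E) = 1$ is equivalent to $\dist(x, \partial(A\setminus E)) < \varepsilon$, which, since $x \in A^\comp \subset (A\setminus E)^\comp$, reduces to $\dist(x, A\setminus E) < \varepsilon$. Conversely, if $x \in A^\comp \cap E$ and $\dist(x,A\setminus E) < \varepsilon$, then $\dist(x,A) \le \dist(x,A\setminus E) < \varepsilon$ gives $x \in A^\varepsilon\setminus A$, and $\dist(x,E^\comp) \le \dist(x,A\setminus E) < \varepsilon$ gives $x \in E\setminus E^{-\varepsilon}$, so both sets coincide. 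Substituting these three identifications into \eqref{deficit inequality} yields \eqref{eps inequal}. There is no real obstacle here; the only subtle point is the verification that the $U_1$-type error term vanishes, and that the two characterizations of $\widehat U_{11}$ agree, both of which are elementary consequences of the metric structure of $\phi_\varepsilon$.
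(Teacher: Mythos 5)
Your proposal is correct and follows essentially the same route as the paper: invoke Proposition~\ref{generalInequality} (using Remark~\ref{rem:verify assumption} for Assumption~\ref{phi consistency}), identify the adversarial deficits with relative $\varepsilon$-perimeters, and kill the error term by showing $\widehat U_{1}=\emptyset$ because points of $A^{-\varepsilon}\cap(E^\comp\setminus E^\varepsilon)$ are at distance at least $\varepsilon$ from $A^\comp\cup E=(A\setminus E)^\comp$. Your additional check that the Table~\ref{table:lambda} description of $\widehat U_{11}$ agrees with the metric one in the corollary statement is a detail the paper delegates to Remark~\ref{rem:verify assumption}, but it is the same computation.
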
 

\begin{proof} 
To prove the corollary, we only need to check that $\phi_\e$ satisfies Assumption~\ref{phi consistency} (which is done in Remark~\ref{rem:verify assumption}) and to verify that $\widehat U_{1}$ is empty. To that end, if $x \in \widehat U_{1}$ then
\[
x \in A \cap E^c \text{ such that } d(x,A^c) > \eps \text{ and } d(x,E) > \eps,
\]
which in turn implies that $d(x,A^\comp \cup E) > \eps$. Hence for such $x$, $\phi_\eps(x;A\setminus E) = 0$ and accordingly $\widehat U_{1} = \emptyset$.

\end{proof}

\begin{corollary}
    Let $\varepsilon>0$, $p\in[0,1)$, $\{\mathfrak p_{x,\varepsilon}\}_{x\in \R^d}$ be a family of probability measures, and $\phi = \phi_{\varepsilon,p}$. Let $A,E\in \cB(\R^d)$ such that $w_0\rho_0-w_1\rho_1 > \delta>0$ on $E$ and $J_{\varepsilon,p}(A\setminus E) - J_{\varepsilon,p}(A) \ge 0$. Then, \begin{equation} \ProbPer_{\varepsilon,p}(A;E) \le \ProbPer_{\e,p}(E^\comp;A) - \delta \mathcal L^d(A\cap E) + w_0\rho_0(\widehat U_{11}) + w_1\rho_1(\widehat U_{1}), \label{prob inequal}\end{equation}
    where $\widehat U_{11}= \{x\in A^\mathsf{c}\cap E:  \prob(x'\in A\setminus E:x'\sim\mathfrak p_{x,\e}) > p\}$ and $\widehat U_{1} = \{x\in \tilde\Lambda^1_{\e,p}(A) \cap \tilde\Lambda^0_{\e,p}(E): \prob(x'\in (A\setminus E)^\comp:x'\sim\mathfrak p_{x,\e})>p\}$. 
    
    \label{lem: prob energy inequality}
\end{corollary}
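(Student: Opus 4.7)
The plan is to show that this corollary follows almost immediately by specializing Proposition~\ref{generalInequality} to the attack function $\phi_{\e,p}$. The bulk of the work has already been done: Remark~\ref{rem:verify assumption} verifies that $\phi_{\e,p}$ satisfies Assumption~\ref{phi consistency} (both the complement property and $\Lambda$-monotonicity), so the hypotheses of Proposition~\ref{generalInequality} are in place once we invoke the standing assumption $w_0\rho_0-w_1\rho_1>\delta>0$ on $E$ and the hypothesis $J_{\e,p}(A\setminus E)-J_{\e,p}(A)\ge 0$.

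With that in hand, the first step is simply to identify the adversarial deficit $D_{\phi_{\e,p}}$ with the probabilistic $\e$-perimeter. Directly from \eqref{eqn: rel adv def} and the definition of $\Lambda^i_{\e,p}$, one has
\begin{equation*}
D_{\phi_{\e,p}}(A;E) = w_0\rho_0(\Lambda^0_{\e,p}(A)\cap E) + w_1\rho_1(\Lambda^1_{\e,p}(A)\cap E) = \ProbPer_{\e,p}(A;E),
\end{equation*}
and similarly $D_{\phi_{\e,p}}(E^\comp;A) = \ProbPer_{\e,p}(E^\comp;A)$, so the left-hand side and first right-hand side terms of \eqref{deficit inequality} become those of \eqref{prob inequal}.

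The second step is to rewrite the error sets $\widehat U_{11}$ and $\widehat U_{1}$ from Table~\ref{table:lambda} in terms of $\phi_{\e,p}$. For $\widehat U_{11}$, the conditions $x \in \Lambda_{\phi}^0(A)\cap \Lambda_{\phi}^1(E)$ place $x$ in $A^\comp \cap E$ (since $\Lambda^0_{\e,p}(A)\subset A^\comp$ and $\Lambda^1_{\e,p}(E)\subset E$), and the additional condition $\phi_{\e,p}(x;A\setminus E)=1$ translates directly to $\prob(x'\in A\setminus E : x'\sim \mathfrak p_{x,\e})>p$. For $\widehat U_{1}$, the defining membership is verbatim the stated description. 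No further simplification is available, since (unlike in Corollary~\ref{eps energy inequality} for $\phi_\e$) the set $\widehat U_{1}$ need not be empty: a point in $\overline{\Lambda^1_{\e,p}}(A) \cap \overline{\Lambda^0_{\e,p}}(E)$ has small probability of landing in $A^\comp$ and small probability of landing in $E$, but these probabilities do not bound $\prob(x'\in (A\setminus E)^\comp)$ from above because $A^\comp$ and $E$ can overlap nontrivially in terms of the measure $\mathfrak p_{x,\e}$.

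There is no genuine obstacle here; the only point that merits explicit remark in the writeup is precisely the observation above, namely why we cannot conclude $\widehat U_{1}=\emptyset$ in the probabilistic setting while we could in the metric setting. Substituting the identifications into \eqref{deficit inequality} then yields \eqref{prob inequal} and completes the proof.
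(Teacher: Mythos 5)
Your proposal is correct and follows the same route as the paper: the paper's proof simply cites Remark~\ref{rem:verify assumption} for Assumption~\ref{phi consistency} and then applies Proposition~\ref{generalInequality}, exactly as you do. Your extra observations (identifying $D_{\phi_{\e,p}}$ with $\ProbPer_{\e,p}$ and explaining why $\widehat U_{1}$ cannot be discarded here, unlike in Corollary~\ref{eps energy inequality}) are accurate elaborations of details the paper leaves implicit.
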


\begin{proof}
We verified that $\phi_{\e,p}$ satisfies Assumption~\ref{phi consistency} in Remark~\ref{rem:verify assumption}. Thus we can apply Proposition~\ref{generalInequality} to conclude that the energy exchange inequality holds for the probabilistic adversarial training problem \eqref{PATP}. 
    
\end{proof}


\section{Uniform Convergence for the Adversarial Training Problem}\label{sec:conv atp}

Before tackling convergence for the generalized adversarial problem \eqref{DATP}, we first consider the convergence for the adversarial training problem \eqref{eqn:ATP} to understand the results in a more concrete setting. The results for \eqref{eqn:ATP} are also stronger than those for \eqref{DATP} and allow for more straightforward proofs that provide the basis for our approach in the subsequent section. We will return to \eqref{DATP} in Section~\ref{sec:det conv} equipped with better intuition and understanding.

In this section, we establish uniform convergence in the Hausdorff metric of minimizers of the adversarial training problem \eqref{eqn:ATP} to Bayes classifiers on compact sets as the parameter $\varepsilon \to 0^+$. As previously stated in Remark~\ref{rem:atp related work}, current convergence results are in the (weaker) $L^1$ topology. We begin by stating a modest assumption we make upon the underlying metric space.

\begin{assumption}
 For the remainder of the paper, we assume that the metric $\dist$ is induced by a norm. Then, $\mathcal L^d(B_\dist(r)) \ceq \omega_\dist r^d$ for the constant $\omega_\dist = \mathcal L^d(B_\dist(1))$. Naturally, $\omega_\dist$ will also depend on the dimension $d$, but we suppress this in the notation. Additionally, we will identify the conditional measures in \eqref{eps perim} with their densities, meaning that we can express $d\rho_i = \rho_i(x) \, dx$. 
\end{assumption}


For these norm balls, it will be useful to estimate their $\e$-perimeter. When $\e\le R$ and  $\rho_0,\rho_1$ are bounded from above, this amounts to estimating the volume between two norm balls that are distance $2\e$ apart. 

\begin{lemma}
    Let $0 < \e \le R$ for some fixed $R>0$. Suppose $\rho_0,\rho_1 \le M$ on $\R^d$. Then, there exists a constant $\alpha >0$ independent of $R, \e$, and $x$ such that
    \[\e\Per_{\varepsilon}(B_\dist(x,R)) \le \alpha R^{d-1}\e. \]
    \label{lem: eps per ball est}
\end{lemma}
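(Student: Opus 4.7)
The plan is to unpack the definition of $\e\Per_\e$ for a ball, use the boundedness of the densities to pass to Lebesgue measure, and then exploit the fact that the metric is induced by a norm to reduce everything to a volume-of-annulus computation.

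First, I would use the formula \eqref{eps perim} to write
\[\e\Per_\e(B_\dist(x,R)) = w_0\rho_0\bigl(B_\dist(x,R)^\e\setminus B_\dist(x,R)\bigr) + w_1\rho_1\bigl(B_\dist(x,R)\setminus B_\dist(x,R)^{-\e}\bigr).\]
Applying the bound $\rho_0,\rho_1 \le M$ replaces the $\rho_i$-measures of these two ``annular'' regions with $M$ times their Lebesgue measures, at the cost of a factor of $M(w_0+w_1) = M$.

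The key structural observation is that since $\dist$ is induced by a norm, the $\e$-dilation and $\e$-erosion of a ball are again balls with adjusted radii. More precisely, $B_\dist(x,R)^\e$ and $B_\dist(x, R+\e)$ differ by at most the boundary sphere (a Lebesgue-null set), and similarly $B_\dist(x,R)^{-\e}$ agrees with $B_\dist(x,R-\e)$ up to a null set when $\e < R$ (and is null when $\e = R$). Using the scaling $\mathcal L^d(B_\dist(x,r)) = \omega_\dist r^d$, this yields
\[\mathcal L^d\bigl(B_\dist(x,R)^\e\setminus B_\dist(x,R)\bigr) = \omega_\dist\bigl[(R+\e)^d - R^d\bigr], \quad \mathcal L^d\bigl(B_\dist(x,R)\setminus B_\dist(x,R)^{-\e}\bigr) = \omega_\dist\bigl[R^d-(R-\e)^d\bigr].\]

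Finally, I would control both increments by a single quantity of the form $C_d R^{d-1}\e$. Using either the mean value theorem applied to $t \mapsto t^d$ or a direct binomial expansion, one has $(R+\e)^d - R^d \le d(R+\e)^{d-1}\e$ and $R^d - (R-\e)^d \le dR^{d-1}\e$. The hypothesis $\e \le R$ gives $(R+\e)^{d-1} \le (2R)^{d-1} = 2^{d-1}R^{d-1}$, so both terms are at most $d\,2^{d-1}R^{d-1}\e$. Collecting constants yields the claim with, for example, $\alpha = 2^d\, d\,\omega_\dist M$.

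There is no real obstacle here: the computation is essentially elementary once one recognizes the norm-ball-under-dilation identity. The only point requiring a line of care is the equality between metric dilations/erosions and the concentric balls of shifted radius, but since $\Per_\e$ only sees Lebesgue-absolutely-continuous measures this holds up to null sets, which is enough.
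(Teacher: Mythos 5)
Your proof is correct and follows essentially the same route as the paper: bound the densities by $M$, identify the dilation/erosion of a norm ball with concentric balls of shifted radius, and control $(R+\e)^d-(R-\e)^d$ by $CR^{d-1}\e$ using $\e\le R$. The only cosmetic difference is that you estimate the two annuli separately while the paper merges them into the single annulus $B_\dist(x,R+\e)\setminus B_\dist(x,R-\e)$; the constants agree.
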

\begin{proof}
Recall that \eqref{eps perim} for $A = B_\dist(x,R)$ gives \[\e\Per_{\varepsilon}(B_\dist(x,R)) = w_0\rho_0(B_\dist(x,R+\e)\setminus B_\dist(x,R))+ w_1\rho_1(B_\dist(x,R)\setminus B_\dist(x,R-\e)).\] As $\rho_0,\rho_1$ are bounded from above by $M$, \[\e\Per_{\varepsilon}(B_\dist(x,R)) \le M\mathcal L^d(B_\dist(x,R+\e) \setminus B_\dist(x,R-\e)) = M(\mathcal L^d(B_\dist(x,R+\e)) - \mathcal L^d(B_\dist(x,R-\e))).\] 

By the scaling properties of the norm ball, $\mathcal L^d(B_\dist(x,r)) = \omega_\dist(r)^d$ for all $r\ge 0$. By convexity, we estimate
\[
(R+\e)^d -(R-\e)^d \le d(R+\e)^{d-1}2\e.
\]
As $\e\le R$, we conclude \[\e\Per_{\varepsilon}(B_\dist(x,R)) \le M\omega_\dist d(2R)^{d-1}2\e\le \alpha R^{d-1}\e.\] 

\end{proof}

Throughout the paper, we will require an upper bound on the $\e$-perimeter of the \textit{complement} of $B_\dist(x,R)$. By the complement property from Assumption~\ref{phi consistency} (verified to hold for the $\e$-perimeter in Remark~\ref{rem:verify assumption}),  the bound given by Lemma~\ref{lem: eps per ball est} still holds for $\e\Per_{\varepsilon}(B_\dist(x,R)^\comp)$ since the same upper bound is true for $\rho_0$ and $\rho_1$, namely, 
\begin{equation} \e\Per_{\varepsilon}(B_\dist(x,R)^\comp) \le \alpha R^{d-1}\e.     
    \label{ineq:ball eper}
    \end{equation}

With our normed setting clear, we begin the process of proving uniform Hausdorff convergence for minimizers of the adversarial training problem \eqref{eqn:ATP}. The first step involves proving a technical lemma about the interaction between minimizers and sets $B_\dist(x,R) \subset \{ w_0\rho_0 - w_1 \rho_1 > \delta > 0\}$. Importantly, this means $B_\dist(x,R) \cap A_0=\emptyset$ for a Bayes classifier $A_0$, which can help us relate minimizers of the adversarial training problem to Bayes classifiers. By applying a slicing argument, we will show that minimizers are disjoint from $B_{\dist}(x,R/2^{d+1})$.  

\begin{lemma}
    Let $A\in\cB(\R^d)$ be a minimizer of the adversarial training problem \eqref{eqn:ATP} for $\varepsilon > 0$. Suppose there exists $x\in \R^d$ and $R>0$ such that $w_0\rho_0 - w_1\rho_1 > \delta > 0$ on $B_{\dist}(x,2R)$ with $\rho_0,\rho_1 \le M$  on $\R^d$. Then, there exists a $C > 0$ independent of $R,\delta,\e$, and $x$ such that if $\varepsilon \le \min\left\{R/2^{d+2}, CR\delta^{d+1}\right\}$, then $A \cap B_{\dist}(x,R/2^{d+1}) = \emptyset$.
    
\label{BR slices}
\end{lemma}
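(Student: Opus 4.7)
The plan is to combine the energy exchange inequality (Corollary~\ref{eps energy inequality}) with a dyadic slicing argument across the ball $B_\dist(x, R)$ to upgrade a volume estimate on $A \cap B_\dist(x, r)$ into the pointwise conclusion $A \cap B_\dist(x, R/2^{d+1}) = \emptyset$. The reason the non-degeneracy is assumed on the larger ball $B_\dist(x, 2R)$ rather than on $B_\dist(x, R)$ itself is precisely to leave room for the $\e$-dilations appearing in $\Per_\e$, so that the EEI can be applied uniformly across all the inner dyadic slices.

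First, for every $r$ with $\e \le r \le R$, I would apply Corollary~\ref{eps energy inequality} with $E = B_\dist(x, r)$. Since $E \subset B_\dist(x, 2R)$, the density gap $w_0\rho_0 - w_1\rho_1 > \delta$ holds on $E$, and since $A$ is a minimizer of $J_\e$, the hypothesis $J_\e(A \setminus E) - J_\e(A) \ge 0$ is automatic. The inequality then yields
\begin{equation*}
\delta \mathcal L^d(A \cap B_\dist(x, r)) \le \e \Per_\e(B_\dist(x, r)^\comp; A) + w_0 \rho_0(\widehat U_{11}).
\end{equation*}
Lemma~\ref{lem: eps per ball est} together with \eqref{ineq:ball diff} controls the first term by $\alpha r^{d-1}\e$. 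Since $\widehat U_{11} \subset (A^\e \setminus A) \cap (B_\dist(x, r) \setminus B_\dist(x, r-\e))$ is contained in an $\e$-thin annular shell near $\partial B_\dist(x, r)$ and $\rho_0 \le M$, the second term is also of order $r^{d-1}\e$. Combining these gives the uniform volume upper bound $\mathcal L^d(A \cap B_\dist(x, r)) \le C_1 r^{d-1}\e/\delta$ for all $r \in [\e, R]$.

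Second, I would promote this integrated estimate to a pointwise emptiness assertion through a dyadic slicing over the radii $r_k = R/2^k$, $k = 0, 1, \dots, d+1$. Arguing by contradiction, suppose $A \cap B_\dist(x, R/2^{d+1}) \neq \emptyset$. At each dyadic scale, an application of the EEI together with an isoperimetric or coarea-style selection of a good intermediate slice in the shell $B_\dist(x, r_k) \setminus B_\dist(x, r_{k+1})$ produces an inductive lower bound on $\mathcal L^d(A \cap B_\dist(x, r_k))$ that gains one factor of $\delta$ per level. The auxiliary assumption $\e \le R/2^{d+2}$ guarantees that each shell has thickness exceeding $\e$, so the nonlocal $\e$-perimeter couples effectively to the interior of its slice at every step. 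After $d+1$ iterations the accumulated lower bound is of the form $\mathcal L^d(A \cap B_\dist(x, R/2^{d+1})) \gtrsim \delta^d (R/2^{d+1})^d$, and pairing this against the upper bound of the first step forces $\e \gtrsim R \delta^{d+1}$, contradicting $\e \le CR\delta^{d+1}$ once $C$ is chosen small enough.

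The hard part will be executing the second step cleanly. The first-step upper bound by itself only yields emptiness in a ball of radius $\sim \e/\delta$, missing exactly $d$ powers of $\delta$ relative to the stated threshold $\e \le CR\delta^{d+1}$. Recovering that additional $\delta^d$ requires exploiting the minimizing property at every dyadic scale rather than just globally, iterating an isoperimetric-type comparison $d+1$ times so that at each step one layer of the $\e$-perimeter is traded against a volume weighted by $\delta$. The accounting of the cross terms coming from $\widehat U_{11}$ at each scale, and verifying that they stay lower-order than the accumulated $\delta$-gain, is where the delicate work sits.
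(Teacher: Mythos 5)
Your first step is correct and matches the paper's Step 1: for a minimizer, the energy exchange inequality together with Lemma~\ref{lem: eps per ball est} gives $\mathcal L^d(A\cap B_\dist(x,r)) \le C r^{d-1}\e/\delta$ uniformly over the relevant radii. The second step, however, contains a genuine gap that inverts the logic of the argument. You propose to run the dyadic iteration as a growing \emph{lower} bound, arriving at $\mathcal L^d(A\cap B_\dist(x,R/2^{d+1})) \gtrsim \delta^d (R/2^{d+1})^d$ from the contradiction hypothesis. But the hypothesis $A\cap B_\dist(x,R/2^{d+1})\neq\emptyset$ gives no volume lower bound on $A$ whatsoever --- a single point has Lebesgue measure zero --- and no isoperimetric or coarea comparison can manufacture a lower bound of order $R^d\delta^d$ out of mere nonemptiness. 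The only quantitative consequence of nonemptiness is the one the paper exploits: a point $z\in A\cap B_\dist(x,R/2^{d+1})$ forces $A^\e\supset B_\dist(z,\e)$, hence $\mathcal L^d(A^\e\cap B_\dist(x,R/2^d))\ge\omega_\dist\e^d$. This fixed, $\e^d$-sized lower bound on the \emph{dilation} is the quantity the iteration must eventually contradict; without it there is nothing to pair the upper bound against.

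Consequently the iteration has to run in the opposite direction from what you describe: one sharpens the \emph{upper} bound, gaining a power of $\e$ (not of $\delta$) at each level. At scale $i$ one pigeonholes over the roughly $R/(2^{i+1}\e)$ disjoint annuli of width $2\e$ in $B_\dist(R/2^{i-1})\setminus B_\dist(R/2^{i})$ to select a good radius $s_i$ at which both $\mathcal L^d(A\cap(B_\dist(s_i)\setminus B_\dist(s_i-\e)))$ and the corresponding quantity for $A^\e\setminus A$ are at most the previous bound times $\e/R$; feeding this thin-annulus control back into the energy exchange inequality (it dominates $\e\Per_\e(B_\dist(s_i)^\comp;A)$ and the $\widehat U_{11}$ error term) yields an estimate of order $\e^{i+1}$. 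After $d$ steps one obtains $\mathcal L^d(A^\e\cap B_\dist(s_d))\le C_{d+1}\e^{d+1}/(R\delta^{d+1})$, which falls strictly below $\omega_\dist\e^d$ exactly when $\e\le CR\delta^{d+1}$. Note also that the iteration must track the dilation $A^\e$, not just $A$, since only the dilation carries the $\omega_\dist\e^d$ lower bound; bookkeeping $\mathcal L^d(A\cap B_\dist(x,r))$ alone, as in your proposal, cannot close the argument.
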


\begin{proof}
Fix $\e>0$. Choose a coordinate system such that $x =0$ and write $B_\dist(0,R) = B_\dist(R)$. For the sake of contradiction, suppose there exists $z\in A\cap B_\dist(R/2^{d+1})$. Then, \begin{equation}
    \mathcal L^d(A^\varepsilon \cap B_\dist(R/2^d)) \ge \mathcal L^d(B_\dist(\varepsilon)) = \omega_\dist \varepsilon^d.
    \label{slices cond}\end{equation}

Corollary~\ref{eps energy inequality} shows for $r \le R$,
\[\e \Per_\e(A; B_\dist(r)) \le \e \Per_\e(B_\dist(r)^\comp; A) - \delta \mathcal L^d(A \cap B_\dist(r)) + w_0\rho_0(\widehat U_{11})\] with $\widehat U_{11} \subset \Lambda_\e(B_\dist(r)) \cap A^\e$ and $w_0\rho_0(\widehat U_{11}) \le \e \Per_\e( B_\dist(r)^\comp;A^\e)$.

In particular, using the fact that $w_0\rho_0 > \delta$ in $B_\dist(R)$, we obtain
\begin{align*}
\mathcal L^d&((A^\e \setminus A) \cap B_\dist(R)) \le \frac{w_0}{\delta}\rho_0((A^\e \setminus A) \cap B_\dist(R))\\
&\le \frac{\e}{\delta}\Per_\e(A;B_\dist(R))\le \frac{\e}{\delta} \Per_\e(B_\dist(R)^\comp;A)-\mathcal L^d(A\cap B_\dist(R))+ \frac{w_0}{\delta}\rho_0(\widehat U_{11}). 
\end{align*}
Rearranging and applying the bound $w_0\rho_0(\widehat U_{11}) \le \e \Per_\e(B_\dist(R)^\comp;A^\e)$, we estimate
\begin{align}
\mathcal L^d(A^\e \cap B_\dist(R))&\le \frac{\e}{\delta}\Per_\e(B_\dist(R)^\comp;A)+ \frac{w_0}{\delta}\rho_0(\widehat U_{11})\\
&\le \frac{2\e}{\delta}\Per_\e(B_\dist(R)^\comp;A^\e)\\
&\le2\alpha \frac{R^{d-1}}{\delta}\e  \label{est for R}
\end{align}
with the last inequality due to \eqref{ineq:ball eper}. Note that \eqref{est for R} also holds replacing $R$ with $r \le R.$

Using that $\rho_0,\rho_1$ are bounded from above by $M$, we estimate
\begin{align*}
\sum_{k=0}^{\lfloor\frac{R}{4\e}\rfloor-1} &\mathcal L^d(A^\e \cap B_\dist(R/2 + 2k\e)) \le\sum_{k=0}^{\lfloor\frac{R}{4\e}\rfloor-1} \frac{2\e}{\delta}\Per_\e(B_\dist(R/2 + 2k\e)^\comp; A^\e)\\
&\le \frac{2M}{\delta} \sum_{k=0}^{\lfloor\frac{R}{4\e}\rfloor-1} \mathcal L^d(A^\e \cap(B_\dist(R/2+(2k+1)\e)\setminus B_\dist(R/2+(2k-1)\e))\\
&\le \frac{2M}{\delta} \mathcal L^d(A^\e \cap B_\dist(R)) \le 4\alpha M\frac{R^{d-1}}{\delta^2}\e
\end{align*}
thanks to \eqref{est for R}.

In particular,
\[ \left\lfloor\frac{R}{4\e}\right\rfloor \min_k \mathcal L^d(A^\e \cap B_\dist(R/2+2k\e))\le \frac{2M}{\delta} \mathcal L^d(A^\e\cap B_\dist(R))\le 4\alpha M\frac{R^{d-1}}{\delta^2}\e.\]

If $\e \le R/8$ so that $\lfloor\frac{R}{4\e}\rfloor\ge \frac{R}{4\e}-1\ge \frac{R}{8\e}$, then by letting $s_1 =R/2+2k\e$ achieve $\min_k \mathcal L^d(A^\e \cap B_\dist(R/2+2k\e))$, we then obtain 
\[\mathcal L^d (A^\e \cap B_\dist(s_1))\le 32\alpha M\frac{R^{d-2}}{\delta^2}\e^2. \label{est for s1}\]

Then, repeating the same construction at the scale $R/2^i$, $i \ge 2$, we find
\[\mathcal L^d(A^\e \cap B_\dist(s_i))\le \frac{8\e}{R/2^{i-1}} \frac{2M}{\delta}\mathcal L^d(A^\e \cap B_\dist(s_{i-1}))\le \frac{2^{i+3}M}{R\delta\e} \mathcal L^d(A^\e \cap B_\dist(s_{i-1}))\]
as long as $\e \le \frac{R}{2^{i+2}}$ (that is, $i \le \log_2 \left(\frac{R}{4\e}\right)$).

For $i = d$, it follows
\begin{align*}
\mathcal L^d(A^\e \cap B_\dist(s_d)) &\le 2^{\sum_{i=2}^d i}\left(\frac{8M\e}{R\delta}\right)^{d-1}\mathcal L^d(A^\e \cap B_\dist(s_1))\\
&\le 2^{\frac{d(d+1)}{2}+3d-4}\left(\frac{M\e}{R\delta}\right)^{d-1}32 \alpha M\frac{R^{d-2}}{\delta^2}\e^2.
\end{align*}

Hence,
\[\mathcal L^d(A^\e \cap B_\dist(s_d)) \le 2^{\frac{d(d+1)}{2}+3d+1}\alpha M^d\frac{\e^{d+1}}{R\delta^{d+1}}.\]

Letting $C_{d+1}\ceq 2^{\frac{d(d+1)}{2}+3d+1}\alpha M^d$, we conclude if $\e < \min\{R/2^{d+2}, \omega_dC^{-1}_{d+1}R\delta^{d+1}\},$ then \[\mathcal L^d(A^\e \cap B_\dist(R/2^d))\le \mathcal L^d(A^\e \cap B_\dist(s_d))\le \frac{C_{d+1}}{R\delta^{d+1}}\e^{d+1}< \omega_d\e^d\] which implies that $A\cap B_\dist(R/2^{d+1}) = \emptyset$ by \eqref{slices cond}.

\end{proof}

\begin{remark}
    In Lemma~\ref{BR slices}, we can slightly relax the assumption that $A$ is a minimizer as follows: Recall that we assume $w_0\rho_0 - w_1\rho_1 > \delta > 0$ on $B_\dist(x,2R)$. If we have that $J_\varepsilon(A\setminus B_\dist(x,r)) - J_\varepsilon(A) \ge 0$ for all $r$ such that $R/2^{d+2} \le r \le R$, then the energy exchange inequality \eqref{eps inequal} still holds and the same proof for Lemma~\ref{BR slices} shows that $A\cap B_\dist(x,R/2^{d+1}) = \emptyset$. 
    \label{rem:relaxed assumptions}
\end{remark} 

We now aim to directly relate minimizers of the adversarial training problem \eqref{eqn:ATP} to Bayes classifiers. Recall that the maximal and minimal Bayes classifiers \eqref{minmax BC} are given by \[A_0^{\max} = \{x\in \R^d : w_0\rho_0(x) \leq w_1\rho_1(x)\},\qquad
A_0^{\min} = \{x\in \R^d : w_0\rho_0(x) < w_1\rho_1(x)\}.\] We will \textit{not} be assuming that $A_0^{\max}$ and $A_0^{\min}$ coincide up to a set of  $\rho$ measure zero unless explicitly stated. 

We will now show that on a compact set, we can ``corral'' the minimizer of the adversarial training problem \eqref{eqn:ATP} by any distance $\eta >0$, in the sense that it must lie between the $\eta$-dilation of $A_0^{\max}$ and the $\eta$-erosion of $A_0^{\min}$ when $\varepsilon$ is small enough. 

\begin{lemma}
 Let $A_0^{\max}$ be the maximal Bayes classifier. Suppose that $\rho_0,\rho_1$ are continuous and bounded from above on $\R^d$, and let $\eta > 0$. Then for any compact set $K\subset \R^d$, there exists an $\varepsilon_0>0$ such that for all $0<\varepsilon< \varepsilon_0$, \[\Big[A_\varepsilon \cap K \Big] \subset \Big[(A_0^{\max})^\eta \cap K\Big]\] where $A_\varepsilon\subset\R^d$ is an arbitrary minimizer of the adversarial training problem. 
\label{l-inf conv}
\end{lemma}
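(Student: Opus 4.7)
I will argue by contradiction, combining compactness on $K$ with the slicing estimate of Lemma~\ref{BR slices}. Suppose the conclusion fails: there exist $\eta>0$, a sequence $\varepsilon_n\to 0^+$, minimizers $A_{\varepsilon_n}$ of \eqref{eqn:ATP}, and points $x_n\in A_{\varepsilon_n}\cap K$ such that $\dist(x_n,A_0^{\max})\ge\eta$. Since $K$ is compact, after extracting a subsequence I may assume $x_n\to x^*\in K$, and the closed condition $\dist(\cdot,A_0^{\max})\ge\eta$ passes to the limit.

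The next step is to build, around $x^*$, a ball on which the label-$0$ class is energetically favored. Since $\overline{B_\dist(x^*,\eta/2)}\subset(A_0^{\max})^\comp=\{w_0\rho_0>w_1\rho_1\}$ (using $\dist(x^*,A_0^{\max})\ge\eta$), continuity of $\rho_0$ and $\rho_1$ together with compactness of the closed ball yields a $\delta>0$ with $w_0\rho_0-w_1\rho_1\ge\delta$ on $\overline{B_\dist(x^*,\eta/2)}$. Set $R:=\eta/8$. For all $n$ large enough, $|x_n-x^*|<\eta/8$, hence $B_\dist(x_n,2R)=B_\dist(x_n,\eta/4)\subset B_\dist(x^*,3\eta/8)\subset B_\dist(x^*,\eta/2)$, and therefore $w_0\rho_0-w_1\rho_1>\delta$ on $B_\dist(x_n,2R)$ uniformly in $n$ (once $n$ is large).

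I now invoke Lemma~\ref{BR slices} with the ball centered at $x_n$: once $\varepsilon_n\le\min\{R/2^{d+2},\,CR\delta^{d+1}\}$, which holds for all sufficiently large $n$ because $\varepsilon_n\to 0$ while $R$ and $\delta$ are fixed, the lemma forces $A_{\varepsilon_n}\cap B_\dist(x_n,R/2^{d+1})=\emptyset$. But $x_n\in A_{\varepsilon_n}$ and $x_n$ is the center of that ball, a contradiction. The choice of $\varepsilon_0$ then may be extracted by reversing the quantifiers: using the compactness argument uniformly in the choice of $x\in K\setminus(A_0^{\max})^\eta$ gives a single threshold $\varepsilon_0$ depending only on $K$, $\eta$, and the densities. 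The main (mild) obstacle is producing the two constants $R$ and $\delta$ uniformly, so that the slicing lemma may be applied with parameters independent of $n$; this is resolved precisely by extracting the limit $x^*$ and appealing to continuity there.
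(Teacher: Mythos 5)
Your argument is correct and follows essentially the same route as the paper: the entire content is an application of Lemma~\ref{BR slices}, with continuity of $w_0\rho_0-w_1\rho_1$ plus compactness supplying a positive lower bound $\delta$ on a region bounded away from $A_0^{\max}$. The only difference is packaging — you argue by contradiction and extract a convergent subsequence of bad points to localize $\delta$ near a limit point, whereas the paper obtains $\delta$ (and hence $\varepsilon_0$) uniformly over $(A_0^{\eta})^{\comp}\cap K$ directly; your closing remark about "reversing the quantifiers" is unnecessary, since the contradiction already yields the existence of $\varepsilon_0$.
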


\begin{proof}
For convenience, we abuse notation and let $A_0 = A_0^{\max}$. Assume that $\left(A_0^\eta\right)^\comp \cap K \neq \emptyset$ as otherwise the result is trivial. The conditions are also trivially satisfied if $w_0\rho_0-w_1\rho_1$ never changes sign. This is because for all $\varepsilon> 0$ either $A_0 = A_\varepsilon = \emptyset$ if $w_0\rho_0 - w_1\rho_1 > 0$ on $\R^d$ or $A_0 = A_\varepsilon = \R^d$ otherwise.

Fix $\eta > 0$. Let $R = \frac{\eta}{3}$. Observe that $\overline{A_0^R}\cap \overline{K^{2R}}$ is compact and $A_0\subset \overline{A_0^R}$. Then, by the continuity of $w_0\rho_0-w_1\rho_1$ on $\overline{A_0^R}\cap \overline{K^{2R}}$, there exists a $\delta > 0$ such that \[ \Big[E_{\delta} \cap \overline{K^{2R}}\Big] \subset \Big[\overline{A_0^R}\cap \overline{K^{2R}}\Big]\] where $E_{\delta} = \{x\in \R^d: w_0\rho_0(x) - w_1\rho_1(x) \le \delta\}$. This implies $\ds\left[E_{\delta}^\comp \cap \overline{K^{2R}}\right] \supset \left[\left(\overline{A_0^R}\right)^\comp \cap \overline{K^{2R}}\right]$,  so $w_0\rho_0 - w_1\rho_1 > \delta > 0$ on $\left(\overline{A_0^R}\right)^\comp \cap \overline{K^{2R}}$. In particular as $(A_0^\eta)^\comp \cap K \subset \left[\left(\overline{A_0^R}\right)^\comp \cap \overline{K^{2R}}\right]$, the difference in densities $w_0\rho_0 - w_1\rho_1 > \delta $ on  $(A_0^\eta)^\comp \cap K$.

Take $x\in (A_0^{\eta})^\mathsf{c} \cap K$. Observe that $B_\dist(x,2R)$ satisfies the conditions of Lemma~\ref{BR slices} for $\delta$ as determined previously. Take $\varepsilon_0 = \min\left\{R/2^{d+2}, CR\delta^{d+1}\right\}$ for $C$ is independent of $R,\delta, \e,$ and $x$. Let $\varepsilon \le \varepsilon_0$ and let $A_\varepsilon$ be a minimizer of the adversarial training problem \eqref{eqn:ATP}. Then, $A_\varepsilon \cap B(x,R/2^{d+1}) = \emptyset$ for all $x\in (A_0^{\eta})^\mathsf{c} \cap K$ which implies that $A_\varepsilon \cap (A_0^{\eta})^\mathsf{c} \cap K = \emptyset$. Thus, we conclude \[\Big[A_\varepsilon \cap K\Big] \subset\Big[A_0^\eta\cap K\Big].\]

\end{proof}

\begin{remark}
    The only place where we use the compactness assumption in Lemma~\ref{l-inf conv} is to determine $\delta$ from $\eta$ by the continuity of $w_0\rho_0-w_1\rho_1$ a compact set. 
    \label{rem:compactness}
\end{remark}

The proof established that minimizers $A_\e$ of the adversarial training problem \eqref{eqn:ATP} can be corralled by the maximal Bayes classifier. We can also corral $A_\e$ by the minimal Bayes classifier as follows: Consider interchanging the densities so data points $x$ are distributed according to $\widetilde \rho_0 = \rho_1$ and  $\widetilde \rho_1 = \rho_0$. We can apply Lemma~\ref{l-inf conv} to the minimizer $\widetilde A_\varepsilon = (A_\eps)^\comp$ of the interchanged problem. We can conclude that for all compact sets $K\subset \R^d$ and $\eta > 0$ there exists an $\varepsilon_0 >0$, such that \[ \Big[(A_0^{\min})^{-\eta}\cap K\Big]\subset \Big[A_\varepsilon \cap K\Big] \] for all $\varepsilon\le\varepsilon_0$. This means that we have a two-sided, or ``corralling,'' bound on our minimizer for $\e$ small enough, namely  \[ \Big[(A_0^{\min})^{-\eta}\cap K\Big]  \subset \Big[A_\varepsilon \cap K\Big] \subset \Big[(A_0^{\max})^{\eta} \cap K\Big]. \]

The corralling argument will allow us to examine the Hausdorff distance between Bayes classifiers and minimizers of the adversarial training problem \eqref{eqn:ATP} as the adversarial budget decreases to zero. To begin, we recall the  definition of the Hausdorff distance.  

\begin{definition}
    The Hausdorff distance between two sets $A,E\subset \R^d$ is given by \[\dist_H(A,E) \coloneqq \max\left\{\sup_{x\in A}\dist(x, E), \sup_{x\in E}\dist(x, A)\right\}\] for a metric $\dist$ on $\R^d$. 
    Furthermore, $\dist_H$ is a pseudometric on $\cB(\R^d)$. 
\end{definition}

\begin{remark}
   If $\dist_H(A_0^{\max}, A_0^{\min}) = 0$, 
   then for any $\eta > 0$ and compact set $K\subset \R^d$, there exists an $\varepsilon_0 > 0$ such that 
    \[\Big[(A_0)^{-\eta}\cap K\Big]\subset \Big[A_\varepsilon \cap K\Big] \subset\Big[(A_0)^{\eta}\cap K\Big]\] for all $\varepsilon \le \varepsilon_0$ and for $A_0$ the unique Bayes classifier.
    \label{rem: containment unique BC}
\end{remark}

We now have the tools to show the uniform convergence of minimizers $A_\e$ of the adversarial training problem \eqref{eqn:ATP} to the Bayes classifier $A_0$. To begin, we prove the more general version of the result when the Bayes classifier is not unique up to a set of $\rho$ measure zero. In this case, we can only show that $\lim_{\e\to 0^+} A_\e$ must be corralled by the maximal and minimal Bayes classifiers.

\begin{theorem}
   Suppose $\rho_0, \rho_1$ are continuous and bounded from above on $\R^d$. Let $\{A_\varepsilon\}_{\varepsilon>0}$ be a sequence of minimizers of the adversarial training problem \eqref{eqn:ATP} for $\varepsilon\to 0^+$. Then, for any compact set $K\subset \R^d$, \[\lim_{\e\to0^+}\dist_H((A_{\varepsilon}\cup A_0^{\max})\cap K,  A_0^{\max}\cap K) = 0\
   \text{ and } \ \lim_{\e\to0^+} \dist_H((A_{\varepsilon}\cap A_0^{\min})\cap K,  A_0^{\min}\cap K)= 0.\]
    \label{hausdorff conv}
\end{theorem}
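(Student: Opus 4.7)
My strategy is to exploit the two-sided corralling assembled just before the theorem,
\[
(A_0^{\min})^{-\eta}\cap K \;\subset\; A_\e\cap K \;\subset\; (A_0^{\max})^\eta\cap K,
\]
valid for every $\eta>0$ and all $\e<\e_0(\eta)$, and convert each inclusion into one direction of Hausdorff convergence. The key structural inputs are that $A_0^{\max}=\{w_0\rho_0\le w_1\rho_1\}$ is closed and $A_0^{\min}=\{w_0\rho_0<w_1\rho_1\}$ is open, both by continuity of the densities, together with compactness of $K$.

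For the first limit, the inclusion $A_0^{\max}\cap K\subset (A_\e\cup A_0^{\max})\cap K$ automatically makes one of the two suprema in $\dist_H$ vanish. For the other, I would argue by contradiction: if there existed $\e_n\to 0^+$, $\eta_0>0$, and $x_n\in A_{\e_n}\cap K$ with $\dist(x_n,A_0^{\max}\cap K)\ge\eta_0$, compactness of $K$ yields a subsequence $x_n\to x_*\in K$; choosing $\eta_n\to 0^+$ with $\e_n<\e_0(\eta_n)$, the upper corralling produces $y_n\in A_0^{\max}$ with $\dist(x_n,y_n)<\eta_n$, so $y_n\to x_*$; closedness of $A_0^{\max}$ then forces $x_*\in A_0^{\max}\cap K$, contradicting the lower bound.

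For the second limit, $(A_\e\cap A_0^{\min})\cap K\subset A_0^{\min}\cap K$ handles one supremum, while the lower corralling gives $(A_0^{\min})^{-\eta}\cap K\subset (A_\e\cap A_0^{\min})\cap K$ and reduces the other to showing $\sup_{x\in A_0^{\min}\cap K}\dist(x,(A_0^{\min})^{-\eta}\cap K)\to 0$ as $\eta\to 0^+$. Openness of $A_0^{\min}$ supplies the pointwise statement (each $x\in A_0^{\min}$ lies in $(A_0^{\min})^{-r}$ for some $r=r(x)>0$), which I would upgrade to the uniform one by a parallel compactness argument: a putative sequence $x_n\in A_0^{\min}\cap K$ with $\dist(x_n,(A_0^{\min})^{-\eta_n}\cap K)\ge\eta_0$ subconverges to some $x_*\in K$, and if $x_*\in A_0^{\min}$ a fixed neighborhood of $x_*$ sits in $(A_0^{\min})^{-\eta_n}\cap K$ for large $n$, contradicting the bound.

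The principal technical obstacle is the case $x_*\in\partial A_0^{\min}$ in the second limit, where openness no longer supplies a uniform insertion radius; this case does not arise in the first limit thanks to the closedness of $A_0^{\max}$. I would handle it by exhausting $A_0^{\min}\cap K$ by the closed sets $\overline{(A_0^{\min})^{-\eta}}\cap K$ and controlling the residual boundary layer via the continuity of $w_0\rho_0-w_1\rho_1$, approximating $x_n$ by nearby deeper-interior points at a rate tied to its own depth. The trivial degenerate scenarios $A_0^{\max}\cap K=\emptyset$ or $A_0^{\min}\cap K=\emptyset$ follow immediately from the corralling together with the positive separation between disjoint closed sets when one is compact, which forces $A_\e\cap K=\emptyset$ for all sufficiently small $\e$.
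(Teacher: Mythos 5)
Your architecture is the same as the paper's: both proofs feed the corralling inclusions from Lemma~\ref{l-inf conv} (and its mirrored version for $A_0^{\min}$) into a contradiction argument, and both dispose of one of the two suprema in each Hausdorff distance by the trivial inclusion. For the first limit you are in fact more careful than the paper's terse ``this contradicts Lemma~\ref{l-inf conv}'': the corralling only places $x_n\in A_{\e_n}\cap K$ within $\eta_n$ of $A_0^{\max}$, not of $A_0^{\max}\cap K$, and your subsequence argument ($x_n\to x_*$, $y_n\to x_*$, closedness of $A_0^{\max}$ and of $K$ forcing $x_*\in A_0^{\max}\cap K$) is exactly what closes that gap.

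The one soft spot is your patch for the boundary case in the second limit. The reduction to showing $\sup_{x\in A_0^{\min}\cap K}\dist\bigl(x,(A_0^{\min})^{-\eta}\bigr)\to0$ is correct, but ``approximating $x_n$ by nearby deeper-interior points at a rate tied to its own depth'' is not obviously executable: the depth $\dist\bigl(x_n,(A_0^{\min})^\comp\bigr)$ can vanish with no quantitative relation to $\eta_n$, and continuity of $w_0\rho_0-w_1\rho_1$ gives no lower bound on how fat $A_0^{\min}$ is near $x_n$. A clean finish uses only openness and runs parallel to your first argument: if $x_n\in A_0^{\min}\cap K$ satisfied $\dist\bigl(x_n,(A_0^{\min})^{-\eta_n}\bigr)\ge\eta_0$ with $\eta_n\to0$, then $B_\dist(x_n,\eta_0)\cap (A_0^{\min})^{-\eta_n}=\emptyset$, i.e.\ every point of $B_\dist(x_n,\eta_0)$ lies within $\eta_n$ of $(A_0^{\min})^\comp$; passing to a convergent subsequence $x_n\to x_*$ yields $B_\dist(x_*,\eta_0/2)\subset\overline{(A_0^{\min})^\comp}$, which is impossible because that ball contains the points $x_n$ of the open set $A_0^{\min}$. (The residual issue of whether the nearest point lies in $K$ is handled by the same subsequence device as in your first limit.) With that replacement your proof is complete; the paper itself dispatches this entire direction with ``an analogous argument,'' so your version, once repaired, is actually the more explicit one.
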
 

\begin{proof}
   Let $K$ be a compact set. Observe that $A_0^{\max}\subset A_\varepsilon\cup A_0^{\max}$, so \[\dist_H((A_\varepsilon\cup A_0^{\max})\cap K, A_0^{\max}\cap K) = \sup_{x\in (A_\varepsilon\cup A_0^{\max})\cap K} \dist(x,A_0^{\max}\cap K).\] For the sake of contradiction, suppose this quantity does not go to zero as $\varepsilon\to0^+$. Then, there exists an $\eta >0$ such that for all $\varepsilon_0>0$ there exists an $0<\varepsilon\leq\varepsilon_0$ such that, \[\sup_{x\in (A_\varepsilon\cup A_0^{\max})\cap K}\dist(x, A_0^{\max}\cap K) >\eta.\] However, this contradicts Lemma~\ref{l-inf conv}. Thus, we conclude \[\lim_{\e\to0^+}\dist_H((A_{\varepsilon}\cup A_0^{\max})\cap K,  A_0^{\max}\cap K) = 0.\] As $A_{\varepsilon}\cap A_0^{\min} \subset A_0^{\min},$ an analogous argument proves that \[\lim_{\e\to0^+} \dist_H((A_{\varepsilon}\cap A_0^{\min})\cap K,  A_0^{\min}\cap K)= 0.\]

\end{proof}

\begin{corollary}
     Suppose that $\dist_H(A_0^{\max},A_0^{\min}) = 0$. Then under the same assumptions as Theorem~\ref{hausdorff conv},
    \[\lim_{\e\to0^+}\dist_H(A_\varepsilon \cap K, A_0\cap K) = 0\] for $A_0$ the unique Bayes classifier.
    \label{BC uniqueness}
\end{corollary}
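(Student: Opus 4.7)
The strategy is to upgrade the two one-sided corralling bounds in Theorem~\ref{hausdorff conv} into a two-sided Hausdorff limit by exploiting the hypothesis. The key observation is that $\dist_H(A_0^{\max}, A_0^{\min}) = 0$ is equivalent to the closure equality $\overline{A_0^{\max}} = \overline{A_0^{\min}}$. Since any representative of the Bayes classifier satisfies the set-theoretic inclusions $A_0^{\min} \subseteq A_0 \subseteq A_0^{\max}$, this forces $\overline{A_0} = \overline{A_0^{\max}} = \overline{A_0^{\min}}$, and in particular $\dist_H(A_0, A_0^{\max}) = \dist_H(A_0, A_0^{\min}) = 0$.

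I would then fix $\eta > 0$ and decompose $\dist_H(A_\varepsilon \cap K, A_0 \cap K)$ into its two one-sided suprema, bounding each with the matching half of Theorem~\ref{hausdorff conv}. For the sup over $x \in A_\varepsilon \cap K$, Lemma~\ref{l-inf conv} yields $A_\varepsilon \cap K \subset (A_0^{\max})^{\eta/2}$ for $\varepsilon$ small, producing $y \in A_0^{\max}$ within $\eta/2$ of $x$, and then the closure equality $\overline{A_0^{\max}} = \overline{A_0}$ provides $z \in A_0$ within an additional $\eta/2$ of $y$. For the sup over $x \in A_0 \cap K$, use $A_0 \subseteq A_0^{\max} \subseteq \overline{A_0^{\min}}$ to select $y \in A_0^{\min}$ within $\eta/2$ of $x$, and then apply the second limit in Theorem~\ref{hausdorff conv} (the minimal-classifier analogue of Lemma~\ref{l-inf conv}, obtained by the density-swap observation in the remark preceding the theorem) to obtain $z \in A_\varepsilon \cap A_0^{\min} \subseteq A_\varepsilon$ within $\eta/2$ of $y$. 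Chaining these two estimates gives $\dist_H(A_\varepsilon \cap K, A_0 \cap K) < \eta$ for all $\varepsilon$ small enough.

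The main subtlety, and the step I expect to require the most care, is the interaction between the Hausdorff pseudodistance and the compact restriction $\cdot \cap K$: the nearest-neighbor witnesses $y$ and $z$ produced above may land just outside $K$. The cleanest way to handle this is a standard enlargement trick: at the outset, replace $K$ by the compact set $\overline{K^{\eta}}$, on which Theorem~\ref{hausdorff conv} and Lemma~\ref{l-inf conv} still apply verbatim, and carry out the above estimates there. The $\eta$-enlargement is then absorbed into the final tolerance. Letting $\eta \to 0^+$ while choosing $\varepsilon$ correspondingly small yields $\lim_{\varepsilon \to 0^+}\dist_H(A_\varepsilon \cap K, A_0 \cap K) = 0$, as desired.
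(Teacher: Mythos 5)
Your proof is correct and takes the same route as the paper, whose entire proof of this corollary is the single line ``This follows directly from Theorem~\ref{hausdorff conv}''; your chaining of the two one-sided corralling bounds through the closure identity $\overline{A_0^{\min}}=\overline{A_0}=\overline{A_0^{\max}}$ is precisely the content being left implicit there. The $\cdot\cap K$ subtlety you flag is genuine, but it is already present and silently glossed over in the paper's own proof of Theorem~\ref{hausdorff conv} (Lemma~\ref{l-inf conv} only places $A_\varepsilon\cap K$ within $\eta$ of $A_0^{\max}$, not of $A_0^{\max}\cap K$), and your enlargement trick inherits rather than fully resolves it --- as stated it controls the distance to $A_0\cap\overline{K^{\eta}}$ rather than to $A_0\cap K$ --- so you are exactly as careful as, indeed slightly more careful than, the source.
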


\begin{proof}
    This follows from Theorem~\ref{hausdorff conv} as the result of Lemma~\ref{l-inf conv} simplifies when $\dist_H(A_0^{\max}, A_0^{\min}) =0$ as described in Remark~\ref{rem: containment unique BC}.
\end{proof}

In the case where  $\dist_H(A_0^{\max},A_0^{\min}) = 0$, it is natural to consider rates of convergence. In order to obtain such rates, we introduce the following assumption:

\begin{assumption}
    The level set $\{w_0\rho_0 = w_1\rho_1\}$ is \textit{non-degenerate}, meaning that $w_0\rho_1-w_1\rho_1 \in C^1(\R^d)$ and $|w_0\grad \rho_0 - w_1\grad \rho_1| > \alpha > 0$ on $\{w_0\rho_0 = w_1\rho_1\}$ for some constant $\alpha$. In this case, Bayes classifiers are unique up to a set of $\mathcal L^d$ measure zero and $\dist_H(A_0^{\max},A_0^{\min}) = 0$.
    \label{nondegen assumption}
\end{assumption} 

Now, we establish the convergence rate for minimizers of the adversarial training problem \eqref{eqn:ATP} to Bayes classifiers under this non-degeneracy assumption.

\begin{corollary}
    Suppose Assumption~\ref{nondegen assumption} holds and that $\rho_0,\rho_1$ are continuous and bounded from above on $\R^d$. For any compact set $K\subset \R^d$, there exists a constant $C>0$ such that \[ \limsup_{\varepsilon \to 0^+}\frac{\dist_H (A_\varepsilon \cap K, A_0\cap K)}{\varepsilon^{\frac{1}{d+2}}} \leq C \] where $A_0$ is the Bayes classifier.

    \label{coro:eps conv rate}
\end{corollary}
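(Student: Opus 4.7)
The plan is to revisit the corralling argument behind Lemma~\ref{l-inf conv} and Theorem~\ref{hausdorff conv} while tracking the constants explicitly using the non-degeneracy hypothesis. Specifically, I would show that one may take $\eta = C_0 \varepsilon^{1/(d+2)}$ for a constant $C_0$ depending only on $K, \alpha, \rho_0, \rho_1$, obtaining the two-sided inclusion
\[
A_0^{-\eta} \cap K \subset A_\varepsilon \cap K \subset A_0^\eta,
\]
from which $\dist_H(A_\varepsilon \cap K, A_0 \cap K) \le \eta$ follows just as in the sandwich proof of Corollary~\ref{BC uniqueness}.

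The first step is to upgrade the qualitative continuity used in Lemma~\ref{l-inf conv} to a quantitative linear growth estimate near the Bayes boundary. Set $f = w_0\rho_0 - w_1\rho_1$. By Assumption~\ref{nondegen assumption}, $f \in C^1$ and $|\nabla f| > \alpha > 0$ on $\{f = 0\}$. By continuity of $\nabla f$ there exists $\eta_0 > 0$ such that $|\nabla f| \ge \alpha/2$ on the tubular region $T := \{|f| < \eta_0\} \cap K^{\eta_0}$. A gradient-flow (equivalently, signed-distance) argument in $T$ then yields $|f(x)| \ge (\alpha/2)\,\dist(x, \{f=0\})$ for $x \in T$. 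On $K^{\eta_0} \setminus T$, $|f|$ is bounded below by a positive constant $\mu$ via compactness and continuity. Splicing these two regimes, for any $x \in K$ with $f(x) > 0$ and $\dist(x, A_0) \ge \eta$ (with $\eta \le \eta_0$), we obtain $f(x) \ge c\eta$, where $c = \min(\alpha/2,\, \mu/\eta_0)$.

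Next, for each $x \in K \setminus A_0^\eta$, I would apply Lemma~\ref{BR slices} with $R = \eta/4$ and $\delta = c\eta/4$. Indeed for any $x' \in B_\dist(x,2R)$ one has $\dist(x', \{f=0\}) \ge \eta/2$, so the previous step gives $f(x') \ge c\eta/2 > \delta$, verifying the lemma's hypothesis on $B_\dist(x,2R)$. The smallness requirement becomes
\[
\varepsilon \le \min\bigl\{R/2^{d+2},\, CR\delta^{d+1}\bigr\} \lesssim \eta^{d+2},
\]
which is equivalent to $\eta \ge C_0\,\varepsilon^{1/(d+2)}$ for an appropriate $C_0$. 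For such $\eta$ the lemma produces $A_\varepsilon \cap B_\dist(x, R/2^{d+1}) = \emptyset$, so $x \notin A_\varepsilon$; since $x$ was arbitrary, $A_\varepsilon \cap K \subset A_0^\eta$. The standard interchange $\rho_0 \leftrightarrow \rho_1$ (as in the corralling remark following Lemma~\ref{l-inf conv}) yields the dual inclusion $A_0^{-\eta} \cap K \subset A_\varepsilon$. Taking $\limsup$ then gives the claimed bound.

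The main technical obstacle is the gradient-flow step in the second paragraph: turning the pointwise bound $|\nabla f| > \alpha$ on $\{f=0\}$ into a uniform linear growth estimate $|f(x)| \gtrsim \dist(x,\{f=0\})$ on a whole tubular neighborhood, with constants uniform in $x \in K$. This is classical (implicit function theorem plus an ODE comparison along the gradient flow), but care is needed to splice the ``near zero set'' regime with the ``far from zero set'' regime where only compactness gives the lower bound. The exponent $1/(d+2)$ is then a direct consequence of the $\delta^{d+1}$ factor in Lemma~\ref{BR slices}; as noted in Remark~\ref{rem: conv rate}, this rate is not expected to be sharp, and any improvement in the slicing lemma would propagate here.
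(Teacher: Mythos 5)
Your proposal is correct and follows essentially the same route as the paper: both exploit the non-degeneracy assumption to get $\delta \sim \eta$, so that the smallness constraint $\varepsilon \lesssim R\delta^{d+1}$ from Lemma~\ref{BR slices} becomes $\varepsilon \lesssim \eta^{d+2}$, which inverts to the rate $\eta \sim \varepsilon^{1/(d+2)}$. In fact your linear-growth estimate $|f(x)| \gtrsim \dist(x,\{f=0\})$ makes explicit the step the paper only asserts (``$\delta_i$ is the same order as $\eta_i$''), so your write-up is, if anything, more detailed than the paper's.
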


\begin{proof}
    Consider a sequence $\{\eta_i\}_{i\in \N}$ where $\eta_i > 0$ and $\eta_i \to 0^+$. Define $\varepsilon_i = \min\{C\eta_i, C\eta_i \delta_i^{d+1}\}$ based on the requirements on $\varepsilon$ from Lemma~\ref{BR slices} with $R = \eta_i$ and the continuity bound $\delta =\delta_i$ from Lemma~\ref{l-inf conv}. In this proof, $C$ is a constant always independent of $\eta_i, \varepsilon_i,$ and $\delta_i$ that we will allow to vary throughout this proof. 
    
    As $w_0\rho_0-w_1\rho_1 \in C^1(\R^d)$ and its gradient is bounded away from 0, the boundary $\partial A_0 = \{w_0\rho_0 = w_1\rho_1\}$ is a $C^1$ surface by the implicit function theorem, and hence the Hausdorff distance between the minimal and maximal sets is zero. Furthermore for $\eta_i \ll 1$, $\delta_i$ is the same order as $\eta_i$ which implies $\varepsilon_i= C\eta_i^{d+2}$. 
    
    For each $\varepsilon_i$, let $A_{\varepsilon_i}$ be the associated minimizer of the adversarial training problem \eqref{eqn:ATP}. By Theorem~\ref{hausdorff conv} along with Remark~\ref{BC uniqueness}, for any compact set $K\subset \R^d$ we have that 
    \[\dist_H(A_{\varepsilon_i}\cap K, A_0\cap K) < \eta_i = C\varepsilon_i^\frac{1}{d+2}.\] 

    Thus, we conclude that \[ \limsup_{\varepsilon_i \to 0^+}\frac{\dist_H (A_{\varepsilon_i} \cap K, A_0\cap K)}{\varepsilon_i^{\frac{1}{d+2}}} \leq C. \]

\end{proof}

\begin{remark}
Although we have shown the convergence rate to be at most $O(\varepsilon^{\frac{1}{d+2}})$, we expect that the convergence rate is actually $O(\varepsilon)$ (see the formal asymptotics near $\e = 0$ derived by \textcite{ngt2022NecCond}). The reason we get the convergence rate $\varepsilon^{\frac{1}{d+2}}$ is from the $\delta^{d+1}$ that appears in our bounds for $\varepsilon$. In Lemma~\ref{BR slices}, this term comes from the iterative argument that often employs crude volume bounds. More precise estimates would be required to improve the convergence rate.
\label{rem: conv rate}
\end{remark}

\section{Uniform Convergence for Other Deterministic Attacks}\label{sec:det conv}

Now, we will turn our focus to the generalized adversarial training problem \eqref{DATP}. At the end, we will present the results for the probabilistic adversarial training problem \eqref{PATP} as an example of our results for \eqref{DATP}. Unlike the case of the adversarial training problem \eqref{eqn:ATP}, existence of minimizers to \eqref{DATP} is an open question, and in this case our convergence result can be understood in the spirit of `a priori' estimates in partial differential equations. First, we will make precise which deterministic attack functions we consider. 

\begin{definition}
    A deterministic attack function $\phi$ is \textit{metric} if an adversary's attack on $x$ only depends upon points within distance $\varepsilon$ of $x$ for some adversarial budget $\e >0$.  More precisely for two classifiers $A,\widetilde A\in \cB(\R^d)$, 
    \[A\cap B_\dist(x,\e) = \widetilde A\cap B_\dist(x,\e) \implies \phi(x;A) = \phi(x;\widetilde A).\] To avoid a trivial situation where $x$ is always attacked independent of the choice of $A$, we assume the adversary has no power, meaning $\phi(x;A) \equiv 0$, if $A = \emptyset$ or $A =\R^d$ when $\phi$ is a metric attack function. 
    \label{def:metric attack}
\end{definition}

In the following pair of lemmas, we will show two important properties of metric attack functions. The first will allows us to relate $D_\phi$ with $\e\Per_\e$ and provides an upper bound on $D_\phi$ by $\e\Per_\e$. This will allow us to employ many of the estimates of $\e\Per_\e$ from Lemma~\ref{BR slices} in Lemma~\ref{general slices}.

\begin{lemma}
Let $\phi$ be a metric deterministic attack function. For any set $A, E\in \cB(\R^d)$ we have that \[D_\phi(A) \le \varepsilon\text{Per}_\varepsilon(A) \ \text{ and } \ D_\phi(A;E) \le \varepsilon\text{Per}_\varepsilon(A;E).\]
\label{deficit and eps per}
\end{lemma}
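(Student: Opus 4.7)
\medskip

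The plan is to prove the two containments $\Lambda_\phi^0(A) \subset A^\varepsilon \setminus A$ and $\Lambda_\phi^1(A) \subset A \setminus A^{-\varepsilon}$, and then the two stated inequalities follow immediately by integrating against $w_0\rho_0$ and $w_1\rho_1$ respectively and summing; the relative version follows by intersecting each containment with $E$ before integrating. The key ingredient is not Assumption~\ref{phi assumptions} itself but rather the metric structure of $\phi$ built into it via Definition~\ref{def:metric attack}, together with the convention that $\phi(\cdot;A) \equiv 0$ when $A=\emptyset$ or $A=\R^d$.

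For the first containment, I would argue by contrapositive: suppose $x \in A^\comp$ with $\dist(x,A)\ge \varepsilon$. Then $A \cap B_\dist(x,\varepsilon) = \emptyset = \emptyset \cap B_\dist(x,\varepsilon)$, so the metric property forces $\phi(x;A) = \phi(x;\emptyset) = 0$, i.e.\ $x \notin \Lambda_\phi^0(A)$. Taking the contrapositive gives that any $x \in \Lambda_\phi^0(A)$ satisfies $\dist(x,A)<\varepsilon$ and $x \in A^\comp$, hence $x \in A^\varepsilon \setminus A$.

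For the second containment, suppose $x \in A$ with $\dist(x,A^\comp) \ge \varepsilon$, so that $B_\dist(x,\varepsilon) \subset A$. Then $A \cap B_\dist(x,\varepsilon) = B_\dist(x,\varepsilon) = \R^d \cap B_\dist(x,\varepsilon)$, so by the metric property and the full-space convention $\phi(x;A) = \phi(x;\R^d) = 0$, placing $x \notin \Lambda_\phi^1(A)$. Contrapositively, every $x \in \Lambda_\phi^1(A)$ satisfies $\dist(x,A^\comp)<\varepsilon$, so $x \notin A^{-\varepsilon}$ and therefore $x \in A \setminus A^{-\varepsilon}$.

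With the containments in hand, write
\[
D_\phi(A) = w_0\rho_0(\Lambda_\phi^0(A)) + w_1\rho_1(\Lambda_\phi^1(A)) \le w_0\rho_0(A^\varepsilon\setminus A) + w_1\rho_1(A\setminus A^{-\varepsilon}) = \varepsilon\Per_\varepsilon(A),
\]
and the relative inequality is obtained identically after intersecting both $\Lambda_\phi^0(A)$ and $\Lambda_\phi^1(A)$ with $E$. I expect essentially no obstacle: the only subtle point is correctly invoking the metric property at the two extremes $A\cap B_\dist(x,\varepsilon) = \emptyset$ and $A\cap B_\dist(x,\varepsilon) = B_\dist(x,\varepsilon)$, neither of which requires the quantitative threshold $\beta$ from Assumption~\ref{phi assumptions}.
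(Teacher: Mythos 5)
Your proposal is correct and follows essentially the same route as the paper: both establish the containments $\Lambda_\phi^i(A)\subset\Lambda_\e^i(A)$ by invoking the metric property of $\phi$ against the two extreme comparison sets $\widetilde A=\emptyset$ and $\widetilde A=\R^d$, and then integrate. Your observation that only the metric structure (and not the quantitative threshold $\beta$) is used matches the paper's argument exactly.
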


\begin{proof}
    It will be sufficient to show that $\Lambda_\phi^i(A)\subset \Lambda_\e^i(A)$. Take $x\in \Lambda_\phi^0(A)$. If we consider $\widetilde A = \emptyset$, the metric property states 
    \[A\cap B_\dist(x,\e) = \emptyset \implies \phi(x;A) = \phi(x;\emptyset) =0.\] For $x\in A^\comp,$ $A\cap B_\dist(x,\e) \neq \emptyset$ implies $x\in A^\e\setminus A = \Lambda_\e^0(A)$. Thus, we conclude $\Lambda_\phi^0(A)\subset \Lambda_\e^0(A)$. A similar argument with $\widetilde A = \R^d$ shows that $\Lambda_\phi^1(A)\subset \Lambda_\e^1(A)$. 
    
\end{proof}

We now prove a second property of metric attack functions, which isolates where the values of $\phi(x;A)$ and $\phi(x;A\setminus E)$ may differ.

\begin{lemma}
    Let $\phi$ be a metric deterministic attack function. For sets $A,E\in \cB(\R^d)$, if $x\in (E^\e)^\comp$ then $\phi(x;A) =\phi(x;A\setminus E)$. 
    \label{lem:phi isolation}
\end{lemma}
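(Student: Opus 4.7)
The plan is to unpack the hypothesis $x\in (E^\eps)^\comp$ into a disjointness statement, and then invoke the metric nature of $\phi$. First I would rewrite the $\eps$-dilation as $E^\eps = \{y\in\R^d : \dist(y,E)<\eps\}$, so that $x\in (E^\eps)^\comp$ is precisely the assertion $\dist(x,E)\ge \eps$. Since any $y\in B_\dist(x,\eps)\cap E$ would witness $\dist(x,E)<\eps$, this immediately gives
\[
B_\dist(x,\eps)\cap E = \emptyset.
\]

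Next I would deduce that $A$ and $A\setminus E$ coincide on the open ball $B_\dist(x,\eps)$. Using the emptiness above,
\[
(A\setminus E)\cap B_\dist(x,\eps) = A\cap E^\comp\cap B_\dist(x,\eps) = A\cap B_\dist(x,\eps),
\]
where the last equality is simply the fact that $B_\dist(x,\eps)\subset E^\comp$. At this point the conclusion is one step away: Assumption~\ref{phi assumptions} is stated for a \emph{metric} deterministic attack function, so by Definition~\ref{def:metric attack} the value $\phi(x;\cdot)$ depends on its argument only through its intersection with $B_\dist(x,\eps)$. Applying this with the two classifiers $A$ and $A\setminus E$ yields $\phi(x;A) = \phi(x;A\setminus E)$.

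There is no real obstacle; the statement is an immediate consequence of the definitions, and in fact the quantitative bounds in Assumption~\ref{phi assumptions} are not used beyond the metric hypothesis attached to it. The only thing to be careful about is invoking the correct ball (open of radius $\eps$) so that the metric property from Definition~\ref{def:metric attack} applies verbatim.
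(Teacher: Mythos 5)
Your proof is correct and takes essentially the same route as the paper's: both reduce the hypothesis $x\in(E^\eps)^\comp$ to $B_\dist(x,\eps)\subset E^\comp$, conclude that $A$ and $A\setminus E$ agree on $B_\dist(x,\eps)$, and invoke the metric property from Definition~\ref{def:metric attack} (which is built into Assumption~\ref{phi assumptions}). Your observation that only the metric hypothesis, and not the quantitative bounds, is used matches the paper's argument exactly.
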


\begin{proof}
Suppose $x \in (E^\eps)^\comp$. Then, $B(x,\eps) \subset E^\comp$ and so $A \cap B(x,\eps) = (A \setminus E) \cap B(x,\eps)$. Hence, the metric property then implies that $\phi(x;A) = \phi(x;A\setminus E)$.

\end{proof}

We require one additional assumption on a metric attack function $\phi$ in order to prove the generalized version of Lemma~\ref{BR slices}. Namely, if the size of the intersection of $B_\dist(x,\e)$ with the opposite class of $x$ satisfies a lower bound, then $x\in \Lambda_\phi(A)$. 

\begin{assumption}
    Let $\phi$ be a metric deterministic attack function with budget $\eps>0$. For a classifier $A\in \cB(\R^d)$, we assume:
    \[ x\in A^\comp \text{ and }  \mathcal L^d(A\cap B_\dist(x,\e)) > \beta\varepsilon^d\implies  x\in \Lambda_{\phi}^0(A) , \] 
    \[  x\in A \text{ and } \mathcal L^d(A^\comp\cap B_\dist(x,\e)) > \beta\varepsilon^d  \implies x\in \Lambda_{\phi}^1(A)  ,\]
    for some constant $0<\beta<\omega_\dist$ independent of $x, \varepsilon,$ and $A$. 
    \label{phi assumptions}
\end{assumption}

As a consequence of this assumption, we have if $x\in \tilde\Lambda_\phi^0(A)$, then $\mathcal L^d(A\cap B_\dist(x,\e)) \le \beta\e^d$. Likewise, if $x\in \tilde\Lambda_\phi^1(A)$, then $\mathcal L^d(A^\comp \cap B_\dist(x,\e)) \le \beta\e^d$. Furthermore, if Assumption~\ref{phi consistency} also holds for $\phi$, then only one of the two lower bounds needs to be assumed as the other follows by the complement property.

\begin{remark}
This assumption states that a point $x\in \R^d$ is attacked if the portion of its $\e$-neighbors with the opposite label is on the order of $\varepsilon^d$. In this way, the deterministic attack function depends on the adversarial budget $\e$ and the metric. 

Observe that the adversarial training problem~\eqref{eqn:ATP} satisfies Assumption~\ref{phi assumptions}. In fact, it satisfies the statements \begin{align*}
x\in \Lambda_\e^0(A) &\iff x\in A^\comp \text{ and } A\cap B_\dist(x,\e) \neq \emptyset, \\
x\in \Lambda_\e^1(A) &\iff x\in A \text{ and } A^\comp\cap B_\dist(x,\e) \neq \emptyset.
\end{align*}
In Proposition~\ref{verify assump prob}, we will verify that the probabilistic adversarial training problem \eqref{PATP} also satisfies Assumption~\ref{phi assumptions}.

\label{rem:phi and eps}
\end{remark}

In order to show uniform convergence for the generalized adversarial training problem \eqref{DATP}, we first prove the analog of Lemma~\ref{BR slices} by a similar slicing argument. We leverage the relationship between the adversarial deficit and the $\e$-perimeter established in Lemma~\ref{deficit and eps per}. However, there are a few key differences in both the results and the proof. Whereas in Lemma~\ref{BR slices} we show that minimizers of the adversarial training problem \eqref{eqn:ATP} are disjoint from certain norm balls that are misclassified, we show that the intersection of minimizers of \eqref{DATP} with a misclassified norm ball must have $\mathcal L^d$ measure zero. In this sense, we establish a necessary condition for minimizers of \eqref{DATP}. As for the proof of the statement, the final step differs significantly between Lemma~\ref{BR slices} and Lemma~\ref{general slices}. In final step of Lemma~\ref{BR slices}, we are able to use the fact that a single point causes misclassification on the order of $\varepsilon^d$. For the general case, the lower bound on the $\mathcal L^d$ measure condition for misclassification from Assumption~\ref{phi assumptions} requires a more delicate energy argument that examines the exact difference in energies.

\begin{lemma} 
Let $\phi$ be a metric deterministic attack function for $\e > 0$ satisfying  Assumptions~\ref{phi consistency} and \ref{phi assumptions}. Suppose $\rho_0,\rho_1$ are continuous and bounded from above on $\R^d$. Furthermore, suppose $A\in \cB(\R^d)$ is a minimizer of the generalized training problem \eqref{DATP} and there exists $x\in \R^d$ and $R>0$ such that $w_0\rho_0-w_1\rho_1 > \delta > 0$ on $B_\dist(x,2R)$. Then, there exists a constant $C > 0$ independent of $R, \delta,\e$, and $x$ such that if $\varepsilon \le  \min\left\{R/2^{d+2}, CR\delta^{d+1}\right\}$, then $\mathcal L^d(A \cap B(x,R/2^{d+2})) = 0$. 
    \label{general slices}
\end{lemma}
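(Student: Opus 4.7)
The plan is to proceed by contradiction and mirror the four-step strategy of Lemma~\ref{BR slices}, adapting each step to the generalized setting. Assume $\mathcal{L}^d(A \cap B_\dist(x, R/2^{d+2})) > 0$ and take $x = 0$ without loss of generality. In Step~1, I would apply the energy exchange inequality (Proposition~\ref{generalInequality}) with $E = B_\dist(R)$. The pivotal observation is that $\widehat U_{11} \subset \Lambda_\phi^0(A) \cap B_\dist(R)$, so $w_0\rho_0(\widehat U_{11}) \le D_\phi(A;B_\dist(R))$, and this term cancels the $D_\phi(A;E)$ appearing on the left of the EEI, yielding $\delta\mathcal{L}^d(A\cap B_\dist(R)) \le D_\phi(B_\dist(R)^\comp; A) + w_1\rho_1(\widehat U_1)$. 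Using Lemma~\ref{deficit and eps per}, Lemma~\ref{lem: eps per ball est}, and inequality \eqref{ineq:ball diff}, the first term is $O(R^{d-1}\varepsilon)$; using Lemma~\ref{lem:phi isolation} I can confine $\widehat U_1 \subset A \cap (B_\dist(R)^\varepsilon \setminus B_\dist(R))$, whose measure is also $O(R^{d-1}\varepsilon)$. This produces $\mathcal{L}^d(A \cap B_\dist(R)) \le CR^{d-1}\varepsilon/\delta$.

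For Step~2, I would iterate as in the original lemma. In the $i$-th round, slice the annular range $(R/2^i, R/2^{i-1})$ into $2\varepsilon$-wide concentric shells and pigeon-hole against the bound from round $i-1$ to locate a radius $s_i \in (R/2^i, R/2^{i-1})$ for which the shell-mass $V(s_i) = \mathcal{L}^d(A \cap (B_\dist(s_i+\varepsilon) \setminus B_\dist(s_i-\varepsilon)))$ is small. Reapplying the EEI at $E = B_\dist(s_i)$ with the same cancellation, and observing that both $D_\phi(B_\dist(s_i)^\comp;A)$ and $w_1\rho_1(\widehat U_1)$ are dominated by $V(s_i)$ because the relevant $\Lambda$-sets sit inside thin shells about $\partial B_\dist(s_i)$, propagates the estimate to $\mathcal{L}^d(A \cap B_\dist(s_i)) \le C\varepsilon^{i+1}/(\delta^{i+1}R^{d-i-1})$. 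After $d$ rounds, $s_d \in (R/2^d, R/2^{d-1})$ contains $B_\dist(R/2^{d+2})$, so $\mathcal{L}^d(A \cap B_\dist(R/2^{d+2})) \le C\varepsilon^{d+1}/(R\delta^{d+1})$.

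For Step~3 (the contradiction), the generalized setting demands a genuinely new idea: whereas Lemma~\ref{BR slices} closes by noting that a single point $z \in A$ forces $B_\dist(z,\varepsilon) \subset A^\varepsilon$ of volume $\omega_\dist \varepsilon^d$, here $J_\phi$ is insensitive to $\mathcal{L}^d$-null changes so single points carry no energetic weight. Instead, I would exploit Assumption~\ref{phi assumptions} by combining the iteration bound with a Lebesgue density argument: at a density-$1$ point $z \in A \cap B_\dist(R/2^{d+2})$ and for $\varepsilon$ below the density threshold, $\mathcal{L}^d(A \cap B_\dist(z,\varepsilon))$ is forced near $\omega_\dist \varepsilon^d$, which in turn makes $\mathcal{L}^d(A \cap B_\dist(y,\varepsilon)) > \beta\varepsilon^d$ for points $y$ in a suitable $A^\comp$-neighborhood of $z$. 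By Assumption~\ref{phi assumptions} such $y$ belong to $\Lambda_\phi^0(A)$, producing a geometric lower bound on $\mathcal{L}^d(\Lambda_\phi^0(A) \cap B_\dist(R/2^{d+1}))$ and hence on $D_\phi(A; B_\dist(R/2^{d+1}))$; this lower bound, combined with the iteration's upper bound, gives the contradiction once $\varepsilon < C'R\delta^{d+1}$.

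The main obstacle is precisely Step~3: converting positive $\mathcal{L}^d$-mass of $A$ into a quantitative lower bound on the $\Lambda_\phi$-sets with a threshold that is \emph{uniform} in the minimizer $A$. The naive density-point argument yields a threshold depending on the unknown density radius $r_0(z)$; making the argument genuinely uniform likely requires either a covering / Fubini-averaging step over $B_\dist(R/2^{d+2})$ to bypass the dependence on a single density point, or a direct competitor-based analysis using Corollary~\ref{cor:exact diff} to compare $J_\phi(A \setminus (A \cap B_\dist(R/2^{d+2})))$ with $J_\phi(A)$ and extract the contradiction straight from the minimizing property.
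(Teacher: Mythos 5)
Your Steps 1 and 2 match the paper's argument: the cancellation of $w_0\rho_0(\widehat U_{11})$ against $D_\phi(A;B_\dist(R))$, the confinement of $\widehat U_1$ to $A\cap(B_\dist(R+\e)\setminus B_\dist(R))$ via Lemma~\ref{lem:phi isolation}, the comparison $D_\phi \le \e\Per_\e$ from Lemma~\ref{deficit and eps per}, and the iterated slicing down to an $O(\e^{d+1}/(R\delta^{d+1}))$ bound are all exactly what the paper does. You have also correctly diagnosed that the entire difficulty is concentrated in the final step, since $J_\phi$ cannot see single points.

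However, your proposed Step 3 has a genuine gap, and for the reason you yourself flag: the Lebesgue density argument cannot be made uniform. The lemma requires a threshold $\e \le \min\{R/2^{d+2}, CR\delta^{d+1}\}$ with $C$ independent of the minimizer $A$, whereas the radius below which a density point $z$ of $A$ sees $\mathcal L^d(A\cap B_\dist(z,r))\approx \omega_\dist r^d$ depends on both $z$ and $A$ and can be arbitrarily small; there is no way to force $\e$ below it a priori. There is also a directional tension: the iteration bound already gives $\mathcal L^d(A\cap B_\dist(R/2^d)) \lesssim \e^{d+1}/(R\delta^{d+1}) \ll \e^d$, so $A$ is very sparse at scale $\e$ inside the ball, which is the opposite of what your density argument tries to exploit. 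The paper's resolution is the competitor-based analysis you mention only in passing at the end: the sparseness bound $\mathcal L^d(A\cap B_\dist(R/2^d)) < (\omega_\dist-\beta)\e^d$ implies, via Assumption~\ref{phi assumptions}, that \emph{every} $z\in A\cap B_\dist(R/2^{d+1}+\e)$ satisfies $\mathcal L^d(A^\comp\cap B_\dist(z,\e)) > \beta\e^d$ and hence lies in $\Lambda_\phi^1(A)$, so $\overline{\Lambda_\phi^1}(A)\cap B_\dist(R/2^{d+1}+\e) = \emptyset$. This kills the only nonnegative terms $w_1\rho_1(\widehat U_1\cup U_2\cup \widehat U_3)$ in the exact difference of Corollary~\ref{cor:exact diff} for $E = B_\dist(R/2^{d+1})$, leaving $J_\phi(A\setminus E) - J_\phi(A) \le -\delta\mathcal L^d(\cdots) \le 0$; minimality then forces equality and hence $\mathcal L^d$-nullity of $\widetilde U_3, U_4, U_5$ (and of $U_6$ away from the thin shell), giving $\mathcal L^d(A\cap B_\dist(R/2^{d+2})) = 0$. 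Without carrying out this step, your argument does not close.
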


\begin{proof}
    Choose a coordinate system such that $x = 0$ and write $B_\dist(0,R) = B_\dist(R)$ with $x$ as in the statement above. 

 We will first find an initial estimate for $\mathcal L^d(A\cap B_\dist(R))$. As $A$ is a minimizer and $w_0\rho_0-w_1\rho_1>\delta> 0$ on $B_\dist(R)$, we can apply Proposition~\ref{generalInequality} to find that  
\begin{equation}D_\phi(A;B_\dist(R)) \le  D_\phi(B_\dist(R)^\comp;A) - \delta\mathcal L^d(A\cap B_\dist(R)) +w_0\rho_0(\widehat U_{11}) + w_1\rho_1(\widehat U_{1}) \label{eei2}\end{equation}
where \begin{align*}
   \widehat U_{1} &= \{x\in \tilde\Lambda^1_\phi(A)\cap \tilde\Lambda^0_\phi(B_\dist(R)): \phi(x;A\setminus B_\dist(R)) = 1\},\\
     \widehat U_{11} &= \{x\in \Lambda^0_{\phi}(A)\cap \Lambda^1_{\phi}(B_\dist(R)) : \phi(x;A\setminus B_\dist(R)) = 1\}.
\end{align*} 
By \eqref{Dphi1} we have $w_0\rho_0(\widehat U_{11})\le D_\phi(A;B_\dist(R))$. Combining the upper bound on $w_0\rho_0(\widehat U_{11})$ with \eqref{eei2} and simplifying, we find
\begin{equation}\mathcal L^d(A\cap B_\dist(R)) \le \frac{1}{\delta}D_\phi(B_\dist(R)^\comp;A) +\frac{ w_1}{\delta}\rho_1(\widehat U_{1}).\label{est without 11} \end{equation}

Recall that by definition, $\widehat U_1 \subset A\cap B_\dist(R)^\comp$. Additionally by Lemma~\ref{lem:phi isolation}, $\widehat U_1 \subset B_\dist(R+\e)$ as $\phi(x;A\setminus B_\dist(R)) = 1$ while $\phi(x;A) = 0$. Thus, $\widehat U_1 \subset A\cap (B_\dist(R+\e)\setminus B_\dist(R))$. In particular, \begin{equation}w_1\rho_1(\widehat U_1) \le w_1\rho_1(A\cap (B_\dist(R+\e)\setminus B_\dist(R))) \le \e\Per_\e(B_\dist(R)^\comp;A).\label{eqn:U1 estimate}\end{equation} Additionally by Lemma~\ref{deficit and eps per}, we have $D_\phi(B_\dist(R)^\comp;A) \le \e\Per_{\varepsilon}(B_\dist(R)^\comp;A)$. Applying \eqref{est for R} from Lemma~\ref{lem: eps per ball est}, \[\mathcal L^d(A\cap B_\dist(R))\le \frac{1}{\delta}D_\phi(B_\dist(R)^\comp;A) + \frac{w_1}{\delta}\rho_1(\widehat U_{1}) \le \frac{2}{\delta}\e\Per_{\varepsilon}(B_\dist(R)^\comp;A)\le \frac{2\alpha R^{d-1}}{\delta}\e\] for $\alpha$ independent of $R,\delta,\e$, and $x$ as in Lemma~\ref{lem: eps per ball est}. 

 Next, we want to find a radius $s_1\in (R/2,R)$ that will give an order $\e^2$ estimate for $\mathcal L^d(A\cap B_\dist(s_1))$. For $r \le R$, one has \[\mathcal L^d(A \cap B_\dist(r)) \le \frac{2}{\delta}\e \Per_\e(B_\dist(r)^\comp; A).\] We can argue by a discrete slicing argument like in Lemma~\ref{BR slices} to show that there exists an $s_1\in (R/2,R)$ such that \[\mathcal L^d(A\cap B_{\dist}(s_1))\le \frac{2}{\delta}\e\Per_\e(B_\dist(s_1)^\comp;A)\le 32\alpha M\frac{R^{d-2}}{\delta^2}\e^2.\] 

Iterating the argument as in Lemma~\ref{BR slices} yields an order $\varepsilon^{i+1}$ estimate of $\mathcal L^d(A\cap B_\dist(s_i))$ for $s_i\in (R/2^i,R/2^{i-1})$ and $2\le i \le \log_2(\frac{R}{4\e})$ (i.e. $\e \le \frac{R}{2^{i+2}}$). After $d$ iterations, we find
    \[\mathcal L^d(A\cap B_\dist(R/2^{d}))\le \left(\frac{C_{d+1}}{R\delta^{d+1}}\right)\varepsilon^{d+1}, \] where $C_{d+1}\ceq 2^{\frac{d(d+1)}{2}+3d+1}\alpha M^d$.

Finally, we must show that $\mathcal L^d(A\cap B(R/2^{d+2})) = 0$. Let $z\in A\cap B_\dist(\frac{R}{2^{d+1}}+\varepsilon)$. We must consider a region slightly outside of $B_\dist(R/2^{d+1})$ as the following argument needs to apply all points in the $\e$-dilation of $B_\dist(R/2^{d+1})$. We want to show that $z\in \Lambda_\phi^1(A)$. To do so, by Assumption 5 it will suffice to show that if $z \in A$ then $\mathcal L^d(A^\comp \cap B(z,\e)) > \beta \e^d$. 

Recall the estimate from the previous steps, $\mathcal L^d(A\cap B_\dist(R/2^d)) \le \left(\frac{C_{d+1}}{R\delta^{d+1}}\right)\e^{d+1}$. As long as $ \left(\frac{C_{d+1}}{R\delta^{d+1}}\right)\eps < (\omega_\dist - \beta)$, or in other words $\eps < (\omega_\dist - \beta) \left(\frac{R\delta^{d+1}}{C_{d+1}}\right) \ceq C$ then we have 
\[\mathcal L^d(A\cap B_\dist(R/2^d)) < (\omega_\dist -\beta) \e^d\] where $\beta$ is the constant from Assumption~\ref{phi assumptions}. Then as $B_\dist(z,\e) \subset B_\dist(R/2^d)$ we estimate
\[\mathcal L^d(A\cap B_\dist(z,\e)) + \mathcal L^d(A^\comp \cap B_\dist(z,\e))  = \omega_\dist\e^d \implies \mathcal L^d(A^\comp \cap B_\dist(z,\e)) > \beta \e^d.\] Hence, $z\in \Lambda_\phi^1(A)$. In particular, this means that $\tilde\Lambda_\phi^1(A)\cap B_\dist(\frac{R}{2^{d+1}}+\e) = \emptyset.$

We will now examine the difference in energies after removing $B_\dist(R/2^{d+1})$ in order to show that we must actually remove $B_\dist(R/2^{d+2})$ in order to achieve $\mathcal L^d(A \cap B_\dist(R/2^{d+2})) = 0$. By Corollary~\ref{cor:exact diff}, the difference in energy after removing the set $E=B_\dist(R/2^{d+1})$ from $A$ is 
\begin{align*}
J_{\phi}(A\setminus B_\dist(R/2^{d+1})) - J_{\phi}(A) &= w_1\rho_1(\widehat U_{1}\cup U_2\cup \widehat U_3) - (w_0\rho_0-w_1\rho_1)(\widetilde U_{3}\cup U_4) \\
& - w_0\rho_0(U_5 \cup \widetilde U_{6} \cup \widetilde U_{9}\cup \widetilde U_{10} \cup \widetilde U_{11}\cup U_{12}),
\end{align*}
where all sets are as defined in Table~\ref{table:lambda} and \eqref{eqn:U hat and tilde}. By construction, 
\[[\widehat U_{1}\cup U_2\cup \widehat U_3] \subset \left[\tilde\Lambda^1_\phi(A)\cap B_\dist\left(\frac{R}{2^{d+1}}+\varepsilon\right)\right]. \] However, we have just shown that  $\tilde\Lambda^1_\phi(A)\cap B_\dist(\frac{R}{2^{d+1}}+\varepsilon) = \emptyset$. Thus, we conclude that $\widehat U_{1} = U_2 = \widehat U_3 = \emptyset$.

As $w_0\rho_0 -w_1\rho_1> \delta > 0$ on $B_\dist(R/2^{d+1})$, the difference in energies becomes
\begin{align*}
J_{\phi}(A\setminus B_\dist(R/2^{d+1})) - J_{\phi}(A) &= - (w_0\rho_0-w_1\rho_1)(\widetilde U_{3}\cup U_4) 
 - w_0\rho_0(U_5 \cup \widetilde U_{6} \cup \widetilde U_{9}\cup \widetilde U_{10} \cup \widetilde U_{11}\cup U_{12})\\ 
 &\le -\delta\mathcal L^d(\widetilde U_{3}\cup U_4) - \delta\mathcal L^d(U_5 \cup \widetilde U_{6} \cup \widetilde U_{9}\cup \widetilde U_{10} \cup \widetilde U_{11}\cup U_{12})\\
 &\le 0.
\end{align*} By our assumption, $A$ is a minimizer, so $J_{\phi}(A\setminus B_\dist(R/2^{d+1})) - J_{\phi}(A) = 0$. This means all remaining sets must have measure zero, i.e. \[\mathcal L^d(\widetilde U_{3}) = \mathcal L^d(U_4) = \mathcal L^d(U_5)= \mathcal L^d(\widetilde U_{6}) = \mathcal L^d(\widetilde U_{9}) =\mathcal L^d(\widetilde U_{10}) = \mathcal L^d(\widetilde U_{11}) =\mathcal L^d(U_{12}) =  0.\] 
    
Recall from \eqref{eqn:intersection} that $A\cap B_\dist(R/2^{d+1}) = U_3 \cup U_4 \cup U_5 \cup U_6$. We have already shown that $U_3 = \widetilde U_{3}\cup \widehat U_3$, $U_4$, and $U_5$ all have measure zero. However, we notice that $\widehat U_6 \subset B(\frac{R}{2^{d+1}} + \eps) \setminus B(\frac{R}{2^{d+1}}-\eps)$, and so we can conclude that $\mathcal L^d(A \cap B(\frac{R}{2^{d+1}} - \eps)) = 0$. 

Then combining with the facts about $U_1, U_2,$ and $ U_3$, we then get that for any $s < \frac{R}{2^{d+1}} - \eps$ we have that $A\setminus B_\dist(s)$ is a minimizer of \eqref{DATP} and that $A \cap B_\dist(s)$ has measure zero.



\end{proof}

\begin{remark}
    As stated at the end of the proof, $A\setminus B_\dist(x,R/2^{d+2})$ is also a minimizer of \eqref{DATP}. In addition to providing a necessary condition for minimizers, Lemma~\ref{general slices} also gives a construction for a minimizer that is disjoint from $B_\dist(x,R/2^{d+2})$.
    
    Considering the assumptions, we \textit{cannot} relax the assumption that $A$ is a minimizer to $J_{\phi}(A\setminus B_\dist(x,r)) - J_{\phi}(A) \ge 0$ as we could for Lemma~\ref{BR slices} (see Remark~\ref{rem:relaxed assumptions}). Although the energy exchange inequality will still hold, we require that $A$ is a minimizer of \eqref{DATP} to show $\mathcal L^d(A\cap B_\dist(R/2^{d+2}))=0$. 
    \label{rem:gen min and assumption}
\end{remark} 

Assuming a minimizer to \eqref{DATP} exists, Lemma~\ref{general slices} allows us to show that minimizers are (a.e.) disjoint from certain sets where it is energetically advantageous to be assigned label 0 by the classifier. In Lemma~\ref{general l-inf conv}, we will use this result to show that for a prescribed distance $\eta >0$, there exists a minimizer of \eqref{DATP} that can be corralled to be within distance $\eta$ of any Bayes classifier for all $\varepsilon$ smaller than some threshold. This is the generalized version of Lemma~\ref{l-inf conv}. As we cannot expect minimizers of \eqref{DATP} to be sensitive to modification by a $\mathcal L^d$ measure zero set, we do not expect arbitrary minimizers to have this property. However from an arbitrary minimizer, Lemma~\ref{general l-inf conv} provides a method to construct a $\mathcal{L}^d$-a.e. equivalent minimizer that does satisfy this distance condition.

\begin{lemma}
    Let $A_0^{\max}$ be the maximal Bayes classifier, i.e. $A_0^{\max} = \{x\in \R^d: w_0\rho_0(x) \leq w_1\rho_1(x) \}$. Suppose $\rho_0,\rho_1>0$ and continuous and bounded from above on $\R^d$. Let $K$ be a compact set and fix $\eta > 0$. Then, there exists an $\e_0 > 0$ such that for any $0<\e\le \e_0$ and deterministic attack function $\phi$ satisfying Assumptions~\ref{phi consistency} and \ref{phi assumptions} for adversarial budget $\e$ such that for any minimizer $A_{\e,\phi}$ of the generalized adversarial training problem \eqref{DATP} there exist a $\mathcal L^d$ measure zero set $N^{\max}\in\cB(\R^d)$ such that \[\Big[(A_{\varepsilon,\phi}\setminus N^{\max})\cap K\Big]\subset\Big[(A_0^{\max})^\eta \cap K\Big].\] Furthermore, $A_{\varepsilon,\phi}\setminus N^{\max}$ is also a minimizer of \eqref{DATP}. 
    \label{general l-inf conv}
\end{lemma}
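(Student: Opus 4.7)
The approach mirrors Lemma~\ref{l-inf conv} but with Lemma~\ref{general slices} replacing Lemma~\ref{BR slices}. The new wrinkle is that Lemma~\ref{general slices} only yields measure-zero intersections rather than literal emptiness, so I must both build $N^{\max}$ as a countable union of such measure-zero sets and verify that the specific subtraction $A_{\e,\phi} \setminus N^{\max}$ remains a minimizer of \eqref{DATP}.

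First, I would reduce to the nontrivial case $((A_0^{\max})^\eta)^\comp \cap K \neq \emptyset$ with $w_0\rho_0 - w_1\rho_1$ actually changing sign. Setting $R = \eta/3$, I would use continuity and compactness of $\overline{(A_0^{\max})^R} \cap \overline{K^{2R}}$ exactly as in Lemma~\ref{l-inf conv} to extract $\delta > 0$ such that $w_0\rho_0 - w_1\rho_1 > \delta$ on $B_\dist(x, 2R)$ for every $x \in ((A_0^{\max})^\eta)^\comp \cap K$, and then set $\e_0 := \min\{R/2^{d+2}, CR\delta^{d+1}\}$ with the constant $C$ coming from Lemma~\ref{general slices}.

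Next, by separability of $\R^d$ I would pick a countable dense subset $\{x_i\}_{i \in \N} \subset ((A_0^{\max})^\eta)^\comp \cap K$ and set $E := \bigcup_{i \in \N} B_\dist(x_i, R/2^{d+2})$, so that density gives $((A_0^{\max})^\eta)^\comp \cap K \subset E$. For each $i$ and each $\e \le \e_0$, Lemma~\ref{general slices} applied at $x_i$ yields $\mathcal L^d(A_{\e,\phi} \cap B_\dist(x_i, R/2^{d+2})) = 0$. Defining $N^{\max} := A_{\e,\phi} \cap E$, countable additivity produces $\mathcal L^d(N^{\max}) = 0$ while disjointness gives $(A_{\e,\phi} \setminus N^{\max}) \cap K \subset (A_0^{\max})^\eta \cap K$, which is the stated containment.

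The main obstacle is showing that $A_{\e,\phi} \setminus N^{\max}$ is itself a minimizer, since a countable removal of balls is not directly covered by Lemma~\ref{general slices}. My plan is to re-run Step~4 of the proof of Lemma~\ref{general slices} globally, using Corollary~\ref{cor:exact diff} with the fixed $E$ above. That Step~4 argument produced $\overline{\Lambda_\phi^1}(A_{\e,\phi}) \cap B_\dist(x_i, R/2^{d+1} + \e) = \emptyset$ at each $x_i$ from purely local information, and since $R/2^{d+2} + \e \le R/2^{d+1} + \e$, taking a union over $i$ gives $\overline{\Lambda_\phi^1}(A_{\e,\phi}) \cap E^\e = \emptyset$. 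By Lemma~\ref{lem:phi isolation} this forces $\widehat U_1 = U_2 = \widehat U_3 = \emptyset$ in the $U$-set decomposition of $A_{\e,\phi}$ and $E$, so Corollary~\ref{cor:exact diff} collapses to
\[
J_\phi(A_{\e,\phi} \setminus E) - J_\phi(A_{\e,\phi}) = -(w_0\rho_0 - w_1\rho_1)(\widetilde U_3 \cup U_4) - w_0\rho_0(U_5 \cup \widetilde U_6 \cup \widetilde U_9 \cup \widetilde U_{10} \cup \widetilde U_{11} \cup U_{12}).
\]
The right-hand side is nonpositive, since $\widetilde U_3 \cup U_4 \subset E$ where $w_0\rho_0 - w_1\rho_1 > \delta > 0$ and $\rho_0 > 0$ by assumption, while the left-hand side is nonnegative by minimality of $A_{\e,\phi}$. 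Equality must hold, so $A_{\e,\phi} \setminus E = A_{\e,\phi} \setminus N^{\max}$ is a minimizer. The delicate point here is that the "excess" set $\widehat U_6$, which is precisely the obstruction that prevents $A_{\e,\phi} \cap E$ from being automatically nullified by the $U$-set decomposition, never appears in Corollary~\ref{cor:exact diff} and therefore does not obstruct the energy-balance conclusion.
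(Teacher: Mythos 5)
Your proposal is correct, and it reaches the conclusion by a genuinely different route at the one point where the proofs must diverge from Lemma~\ref{l-inf conv}. The paper uses compactness of $((A_0^{\max})^\eta)^\comp\cap K$ to extract a \emph{finite} subcover $\{B_\dist(x_i,R/2^{d+2})\}_{i=1}^n$, takes $N^{\max}$ to be the finite union of the (individually null) intersections, and then preserves minimality by \emph{iterating} Lemma~\ref{general slices} ball by ball, invoking Remark~\ref{rem:gen min and assumption} that each single removal again yields a minimizer. You instead take a countable cover and run the energy-exchange computation \emph{once} with $E=\bigcup_i B_\dist(x_i,R/2^{d+2})$: you correctly observe that the Step~4 conclusion $\overline{\Lambda_\phi^1}(A_{\e,\phi})\cap B_\dist(x_i,R/2^{d+1}+\e)=\emptyset$ is a statement about $A_{\e,\phi}$ alone and hence survives the union over $i$, which kills $\widehat U_1\cup U_2\cup\widehat U_3$ and makes Corollary~\ref{cor:exact diff} collapse to a nonpositive quantity, forcing equality by minimality. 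Your observation that $\widehat U_6$ is absent from the exact-difference formula and therefore cannot obstruct the conclusion is exactly the right thing to notice. What each approach buys: the paper's finite iteration outsources all the work to the already-proven lemma and is shorter to write, but genuinely needs finiteness; your single global computation handles the countable union directly and would be the natural route if one wanted the containment on a non-compact set covered by countably many good balls.

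Two small points of precision. First, the vanishing of $U_2$ and $\widehat U_3$ is not literally a consequence of Lemma~\ref{lem:phi isolation} (which handles $\widehat U_1$); for those you need $\Lambda_\phi^0(E)\subset E^\e\setminus E$ and $\Lambda_\phi^1(E)\subset E$, which follow from the metric property as in Lemma~\ref{deficit and eps per}. Second, you should record (as the paper does in Lemma~\ref{l-inf conv}) why $w_0\rho_0-w_1\rho_1>\delta$ holds on all of $B_\dist(x_i,2R)$ and not just at $x_i$ — this is the $\eta=3R$ bookkeeping — since Lemma~\ref{general slices} and the sign of the $-(w_0\rho_0-w_1\rho_1)(\widetilde U_3\cup U_4)$ term both rely on it. Neither issue affects the validity of your argument.
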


\begin{proof}
    We will follow the proof of Lemma~\ref{l-inf conv}. We again abuse notation and let $A_0 = A_0^{\max}$. Assume that $(A_0^\eta)^\comp \cap K \neq \emptyset$ as otherwise the result is trivial. The conditions are also trivially satisfied if $w_0\rho_0 - w_1\rho_1$ never changes sign.

    Let $R = \frac{\eta}{3}$. By the same argument as in Lemma~\ref{l-inf conv}, the continuity of $w_0\rho_0 - w_1\rho_1$ on the compact set $\overline{A_0^R}\cap \overline{K^{2R}}$ allows us to conclude that there exists a $\delta > 0$ such that $w_0\rho_0 - w_1\rho_1 > \delta$ on $\left(\overline{A_0^R}\right)^\comp \cap  \overline{K^{2R}}$ and $(A_0^\eta)^\comp \cap K$.
    
    

    As $(A_0^\eta)^\mathsf{c} \cap K$ is compact, there exists a finite covering of $(A_0^\eta)^\mathsf{c} \cap K$ by $\{B_\dist(x_i,R/2^{d+2})\}_{1\le i\le n}$ for some $n\in \N$ such that 
    \[\bigcup_{i=1}^n B_\dist(x_i,2R) \subset \Big[E_\delta^\comp \cap \overline{K^{2R}}\Big]\] where $E_{\delta} = \{x \in \R^d : w_0\rho_0(x) - w_1\rho_1(x) \le \delta\}$. Hence, each $B_\dist(x_i,2R)$ satisfies the conditions of Lemma~\ref{general slices} for $\delta$ from the continuity bound. As the constant $C$ from Lemma~\ref{general slices} is independent of $x$, we can let $\varepsilon_0 = \min\left\{R/2^{d+2}, CR\delta^{d+1}\right\}$. 
    
    Suppose $A_{\varepsilon,\phi}$ is a minimizer of the generalized adversarial training problem \eqref{DATP} for some $0<\varepsilon \le \varepsilon_0$ and let $N^{\max} = \bigcup_{i=1}^n  [A_{\varepsilon,\phi}\cap B_\dist(x_i, R/2^{d+2})]$. Then,
    \[ \mathcal L^d\left( \bigcup_{i=1}^n \left[A_{\varepsilon,\phi} \cap B_\dist(x_i, R/2^{d+2})\right]\right) \le \sum_{i=1}^n \mathcal L^d(A_{\varepsilon,\phi}\cap B_\dist(x_i, R/2^{d+2})) =0,\] so $N^{\max}$ is a $\mathcal L^d$ measure zero set.
    By Remark~\ref{rem:gen min and assumption}, an iterative application of Lemma~\ref{general slices} removing one norm ball at a time ensures that $A_{\varepsilon,\phi}\setminus N^{\max}$ is a minimizer of \eqref{DATP}. Furthermore, $[A_{\varepsilon,\phi}\setminus N^{\max}] \cap [(A_0^\eta)^\comp \cap K] = \emptyset$ by construction which implies
     \[\Big[(A_{\varepsilon,\phi}\setminus N^{\max})\cap K\Big]\subset\Big[A_0^\eta\cap K\Big].\]

\end{proof}

\begin{remark}
    In Lemma~\ref{general slices}, we require compactness both for the continuity argument and for the finite covering argument to ensure that we are removing a set of $\mathcal L^d$ measure zero. Compare this with Lemma~\ref{l-inf conv} and Remark~\ref{rem:compactness}.
\end{remark}

We can analogously show that (up to a set of $\mathcal L^d$ measure zero) we can corral $A_{\e,\phi}$ by an $\eta$-erosion of the minimal Bayes classifier $A_0^{\min}$ by considering the flipped density problem. We can apply the result from Lemma~\ref{general l-inf conv} to conclude that on a compact set $K\subset \R^d$ for a fixed $\eta > 0$, there exists a $\varepsilon_0 >0$ such that for $0<\varepsilon\le\varepsilon_0$ and a deterministic attack function $\phi$ with adversarial budget $\e$ satisfying the appropriate assumptions, then for any minimizer $A_{\varepsilon,\phi}$ of \eqref{DATP} there exist a $\mathcal L^d$ measure zero set $N^{\min}$ such that 
\[\Big[(A_0^{\min})^{-\eta}\cap K\Big] \subset \Big[(A_{\varepsilon,\phi}\cup N^{\min})  \cap K\Big].\] Observe that by construction $N^{\max}\subset A_0^\comp$ and $N^{\min} \subset A_0$ so the two sets are disjoint.
Like in the previous case, this establishes a two-sided, ``corralling'' bound on any minimizer for $\e$ small enough, namely,
\[\Big[(A_0^{\min})^{-\eta}\cap K\Big] \subset \Big[(A_{\varepsilon,\phi}\cup N^{\min}\setminus N^{\max})  \cap K\Big] \subset \Big[(A^{\max}_0)^\eta\cap K\Big].\]

\begin{remark}
    If the Bayes classifier is unique in the sense of Remark~\ref{rem:BC uniqueness}, then for any $\eta > 0$ and compact set $K\subset \R^d$, there exists an $\e_0 > 0$ such that for all $0<\varepsilon \le \varepsilon_0$ and $\phi$ satisfying Assumptions~\ref{phi consistency} and \ref{phi assumptions} for adversarial budget $\e$,
    \[\Big[(A_0)^{-\eta}\cap K\Big]\subset \Big[(A_{\varepsilon,\phi}\cup N^{\min}\setminus N^{\max}) \cap K\Big] \subset\Big[(A_0)^{\eta}\cap K\Big],\] provided that $A_{\e,\phi}$ exists.
\end{remark}

 Following the sequence of proofs in Section~\ref{sec:conv atp}, we will now use the corralling result from Lemma~\ref{general l-inf conv} to examine the distance between minimizers of the generalized adversarial training problem \eqref{DATP} and Bayes classifiers. The next theorem is the generalization of Theorem~\ref{hausdorff conv} and establishes uniform convergence in the Hausdorff distance. As previously stated, there is currently no proof of existence for minimizers of \eqref{DATP}, so this result should be seen as a type of a priori uniform convergence estimate. 

\begin{theorem}
   Let $\phi$ be a deterministic attack function satisfying Assumptions~\ref{phi consistency} and \ref{phi assumptions}. Suppose $\rho_0, \rho_1$ are continuous and bounded from above on $\R^d$. Additionally suppose $\{A_{\varepsilon_i,\phi}\}_{i\in \N}$ is a sequence of minimizers of the generalized adversarial training problem \eqref{DATP} with $\varepsilon_i \to 0^+$ as $i\to\infty$. For any compact set $K\subset \R^d$, there exist sequences $\{N^{\min}_i\}_{i\in\N}$ and $\{N^{\max}_i\}_{i\in\N}$ of $\mathcal L^d$ measure zero sets such that 
    \[\lim_{i\to\infty}\dist_H\big(((A_{\varepsilon_i,\phi}\setminus N^{\max}_i)\cup A_0^{\max}) \cap K, A_0^{\max}\cap K\big) = 0 \] 
    and \[\lim_{i\to\infty} \dist_H\big(((A_{\varepsilon_i,\phi}\cup N^{\min}_i)\cap A_0^{\min})\cap K,  A_0^{\min}\cap K\big)=0.\] 
    \label{general hausdorff conv}
\end{theorem}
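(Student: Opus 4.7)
The plan is to follow the proof of Theorem~\ref{hausdorff conv} almost verbatim, replacing each invocation of Lemma~\ref{l-inf conv} by Lemma~\ref{general l-inf conv}; the only new wrinkle is that the corralling now holds only after removing an $\mathcal{L}^d$-null set $N^{\max}_i$ from $A_{\varepsilon_i,\phi}$, which is precisely the form the theorem statement permits.

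First, fix a compact $K\subset \R^d$ and pick a decreasing sequence $\eta_k \searrow 0^+$. For each $k$, Lemma~\ref{general l-inf conv} (applied to $K$ with tolerance $\eta_k$) supplies a threshold $\varepsilon^{(k)}>0$ such that every minimizer of \eqref{DATP} with budget $\varepsilon \le \varepsilon^{(k)}$ admits some $\mathcal{L}^d$-null set $N$ with $(A_{\varepsilon,\phi}\setminus N)\cap K \subset (A_0^{\max})^{\eta_k}\cap K$. Since $\varepsilon_i \to 0^+$, for each $i$ I would assign $k(i)$ to be the largest $k$ with $\varepsilon_i\le \varepsilon^{(k)}$; then $k(i)\to\infty$. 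I then take $N^{\max}_i$ to be the null set provided by Lemma~\ref{general l-inf conv} at tolerance $\eta_{k(i)}$ applied to $A_{\varepsilon_i,\phi}$.

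Because $A_0^{\max}\subset (A_{\varepsilon_i,\phi}\setminus N^{\max}_i)\cup A_0^{\max}$, the two-sided Hausdorff distance collapses to the single supremum
\[
\sup_{x \in ((A_{\varepsilon_i,\phi}\setminus N^{\max}_i)\cup A_0^{\max})\cap K}\dist(x,A_0^{\max}\cap K),
\]
exactly as in Theorem~\ref{hausdorff conv}, and the corralling forces this supremum to be bounded by $\eta_{k(i)}$, yielding the first limit. For the second limit, I would apply the identical argument to the density-flipped problem with $\tilde\rho_0=\rho_1$ and $\tilde\rho_1=\rho_0$: by the complement property in Assumption~\ref{phi consistency}, $(A_{\varepsilon_i,\phi})^\comp$ is a minimizer of the flipped problem, whose maximal Bayes classifier is $(A_0^{\min})^\comp$. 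Running the argument above in that setting and taking complements of the resulting null sets yields the required sequence $\{N^{\min}_i\}$.

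The main obstacle is the diagonal bookkeeping needed to stitch together null sets obtained at different tolerances into a single sequence $\{N^{\max}_i\}$ (and similarly $\{N^{\min}_i\}$) that delivers Hausdorff convergence; this is routine given that $\varepsilon_i\to 0^+$ and $\varepsilon^{(k)}>0$ for every $k$. The only other minor technical point, handled exactly as in Theorem~\ref{hausdorff conv}, is that the dilation $(A_0^{\max})^{\eta_k}$ measures distance to $A_0^{\max}$ rather than to $A_0^{\max}\cap K$; this can be absorbed by enlarging $K$ slightly before applying Lemma~\ref{general l-inf conv}.
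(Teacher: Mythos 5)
Your proposal matches the paper's proof, which simply states that the argument is identical to that of Theorem~\ref{hausdorff conv} with Lemma~\ref{l-inf conv} replaced by Lemma~\ref{general l-inf conv} and the null sets $N_i^{\max}, N_i^{\min}$ taken from that lemma; your diagonal selection over tolerances $\eta_k$ and the density-flipped argument for the second limit are exactly the intended mechanics. No gaps.
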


\begin{proof}
 The proof is identical to that of Theorem~\ref{hausdorff conv} where $N^{\max}_i$ and $N_i^{\min}$ are as defined in the proof of Lemma~\ref{general l-inf conv}. 
 
 
\end{proof}

\begin{corollary}
     If the Bayes classifier $A_0$ is unique in the sense of Remark~\ref{rem:BC uniqueness}, then under the same assumptions as Theorem~\ref{hausdorff conv},
    \[ \lim_{i\to\infty} \dist_H((A_{\varepsilon_i,\phi} \cup N_i^{\min}\setminus N^{\max}_i) \cap K, A_0\cap K) = 0.\]
    \label{gen BC uniqueness conv}
\end{corollary}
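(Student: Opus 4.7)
The plan is to deduce this corollary directly from Theorem~\ref{general hausdorff conv} by combining the two one-sided Hausdorff convergences into a single two-sided bound, exactly in the spirit of how Corollary~\ref{BC uniqueness} is deduced from Theorem~\ref{hausdorff conv}. The uniqueness of the Bayes classifier (in the sense of Remark~\ref{rem:BC uniqueness}, so that $\dist_H(A_0^{\max},A_0^{\min}) = 0$) is what allows the two one-sided bounds to collapse onto the same limiting set.

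First I would apply Theorem~\ref{general hausdorff conv} to the given sequence $\{A_{\varepsilon_i,\phi}\}$ to obtain the two sequences of $\mathcal{L}^d$-null sets $\{N^{\max}_i\}$ and $\{N^{\min}_i\}$, with $N^{\max}_i\subset A_0^\comp$ and $N^{\min}_i\subset A_0$ (so in particular they are disjoint). The theorem furnishes the corralling inclusion recorded just after Lemma~\ref{general l-inf conv}, namely
\[
\big[(A_0^{\min})^{-\eta}\cap K\big] \subset \big[(A_{\varepsilon_i,\phi}\cup N^{\min}_i\setminus N^{\max}_i)\cap K\big] \subset \big[(A_0^{\max})^{\eta}\cap K\big],
\]
valid for every fixed $\eta>0$ once $\varepsilon_i$ is sufficiently small.

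Next I would invoke uniqueness: because $\dist_H(A_0^{\max},A_0^{\min}) = 0$, both $A_0^{\max}\cap K$ and $A_0^{\min}\cap K$ have the same closure, which we identify with $A_0\cap K$ in the Hausdorff pseudometric. Consequently $(A_0^{\max})^\eta\cap K \subset (A_0)^\eta\cap K$ and $(A_0)^{-\eta}\cap K \subset (A_0^{\min})^{-\eta}\cap K$ up to closures, so the inclusion above upgrades to
\[
\big[A_0^{-\eta}\cap K\big] \subset \big[(A_{\varepsilon_i,\phi}\cup N^{\min}_i\setminus N^{\max}_i)\cap K\big] \subset \big[A_0^{\eta}\cap K\big].
\]
By the standard sandwich principle for the Hausdorff pseudometric, any set trapped between $A_0^{-\eta}\cap K$ and $A_0^{\eta}\cap K$ is within Hausdorff distance at most $\eta$ of $A_0\cap K$. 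Letting $\eta\to 0^+$ along the threshold given by Lemma~\ref{general l-inf conv} and using that $\varepsilon_i\to 0^+$, the claimed limit follows.

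The proof is essentially bookkeeping, so there is no real obstacle beyond correctly tracking the null sets; the only point requiring minor care is verifying that the single sequences $\{N^{\max}_i\}$ and $\{N^{\min}_i\}$ produced in the two applications of Lemma~\ref{general l-inf conv} (one for the original problem, one for the flipped-density problem) can be used simultaneously in the sandwich. This is immediate from the noted disjointness $N^{\max}_i\subset A_0^\comp$ and $N^{\min}_i\subset A_0$, which ensures that adjoining $N^{\min}_i$ and removing $N^{\max}_i$ do not interfere with one another.
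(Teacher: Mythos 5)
Your argument is exactly the paper's (the paper's proof of this corollary is simply ``This follows directly from Theorem~\ref{general hausdorff conv}''), and your expansion via the corralling inclusions and the collapse $\dist_H(A_0^{\max},A_0^{\min})=0$ is the intended reasoning. The only point worth a second glance is the erosion side of your ``sandwich principle'' (points of $A_0$ need not be within $\eta$ of $A_0^{-\eta}$ for arbitrary sets), but this is precisely the same implicit step the paper itself relies on, so your proposal matches the paper's route.
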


\begin{proof}
    This follows directly from Theorem~\ref{general hausdorff conv}.
\end{proof}

Recall that Assumption~\ref{nondegen assumption} is a non-degeneracy assumption on the Bayes classifier $A_0$ that ensures that $\dist_H(A_0^{\max},A_0^{\min}) = 0$ and that $A_0$ is unique up to a set of $\mathcal L^d$ measure zero. If we assume that the Bayes classifier is non-degenerate, then it becomes natural to examine the rates of convergence.

\begin{corollary}
     Let $\phi$ be a deterministic attack function satisfying Assumptions~\ref{phi consistency} and \ref{phi assumptions}. Suppose Assumption~\ref{nondegen assumption} holds and that for every $\e >0$ there exists a minimizer $A_{\e,\phi}$ to the generalized adversarial training problem \eqref{DATP}. Additionally, suppose $\rho_0,\rho_1$ are continuous and bounded from above on $\R^d$. 
     For any compact set $K\subset \R^d$, there exist sequences $\{N^{\min}_i\}_{i\in\N}$ and $\{N^{\max}_i\}_{i\in\N}$ of $\mathcal L^d$ measure zero sets and a constant $C>0$ such that \[ \limsup_{i\to\infty}\frac{\dist_H ((A_{\varepsilon_i,\phi}\cup N_i^{\min}\setminus N^{\max}_i) \cap K, A_0\cap K)}{\varepsilon_i^{\frac{1}{d+2}}} \leq C \] where $A_0$ is the Bayes classifier.
     \label{cor:general roc}
\end{corollary}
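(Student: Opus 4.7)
The plan is to quantify the proof of Corollary~\ref{gen BC uniqueness conv} by tracking how the threshold $\varepsilon_0$ produced by Lemma~\ref{general l-inf conv} depends on the target Hausdorff error $\eta$, then inverting the relationship. This parallels exactly the argument used for Corollary~\ref{coro:eps conv rate} in the pure adversarial training case, with Lemma~\ref{BR slices} replaced by Lemma~\ref{general slices} and Lemma~\ref{l-inf conv} replaced by Lemma~\ref{general l-inf conv}; the only genuinely new ingredient is that the corralling must be done up to the $\mathcal{L}^d$-null sets $N_i^{\max}, N_i^{\min}$ produced by those lemmas, which we then simply carry through.

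Concretely, I would fix a sequence $\eta_i \to 0^+$ and, for each $\eta_i$, set $R_i = \eta_i/3$ and let $\delta_i$ denote the continuity-based lower bound on $w_0\rho_0 - w_1\rho_1$ produced inside the proof of Lemma~\ref{general l-inf conv} (and its density-swapped counterpart) on the compact sets $(A_0^{\eta_i})^\comp \cap K$ and $(A_0^{\min})^{-\eta_i}\cap K$. The lemma then guarantees that whenever $\varepsilon_i \le \min\{R_i/2^{d+2},\, C R_i \delta_i^{d+1}\}$, for \emph{any} minimizer $A_{\varepsilon_i,\phi}$ there exist $\mathcal L^d$-null sets $N_i^{\max}, N_i^{\min}$ such that $(A_{\varepsilon_i,\phi}\cup N_i^{\min}\setminus N_i^{\max})\cap K$ lies within Hausdorff distance at most $\eta_i$ of $A_0 \cap K$.

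The crucial quantitative step is to show that Assumption~\ref{nondegen assumption} forces $\delta_i \ge c\,\eta_i$ for $\eta_i$ sufficiently small, with $c>0$ independent of $i$. Since $w_0\rho_0-w_1\rho_1\in C^1$ with $|w_0\nabla\rho_0 - w_1\nabla\rho_1|\ge \alpha>0$ on $\partial A_0$, the implicit function theorem produces a tubular neighborhood of $\partial A_0$ inside which the signed distance parametrizes the level sets and $|w_0\rho_0(x)-w_1\rho_1(x)|\ge (\alpha/2)\dist(x,\partial A_0)$. For $\eta_i$ small enough, the relevant set $(A_0^{\eta_i})^\comp \cap K$ sits inside this tube and every point of it lies at distance at least $\eta_i/3$ from $\partial A_0$, which yields $\delta_i \ge (\alpha/6)\eta_i$. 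Substituting this into the threshold gives $\varepsilon_i \le C \eta_i^{d+2}$, equivalently $\eta_i \le C' \varepsilon_i^{1/(d+2)}$. Applying Corollary~\ref{gen BC uniqueness conv} with this explicit scaling then yields the desired $\limsup$ bound.

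The main obstacle I anticipate is ensuring that the constant extracted from the tubular neighborhood argument is genuinely uniform in $i$: one needs to fix once and for all a compact neighborhood of $\partial A_0 \cap K$ on which the implicit function theorem gives the linear lower bound with a single tube radius and a single constant $\alpha/2$, and then observe that for all $i$ large enough the relevant shells $(A_0^{\eta_i})^\comp\cap K \cap A_0^{\eta_1}$ lie in this fixed neighborhood. Beyond this uniformity check, the proof is a bookkeeping exercise on the thresholds already appearing in Lemma~\ref{general l-inf conv}, with the null sets $N_i^{\max}, N_i^{\min}$ being exactly those constructed in that lemma for the scale $\eta_i$.
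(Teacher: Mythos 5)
Your proposal is correct and follows essentially the same route as the paper: the paper's proof simply transplants the argument of Corollary~\ref{coro:eps conv rate}, i.e.\ it fixes $\eta_i\to 0^+$, sets $\varepsilon_i=\min\{C\eta_i, C\eta_i\delta_i^{d+1}\}$ from Lemmas~\ref{general slices} and \ref{general l-inf conv}, and uses Assumption~\ref{nondegen assumption} to get $\delta_i$ of the same order as $\eta_i$, hence $\eta_i = C\varepsilon_i^{1/(d+2)}$, carrying the null sets $N_i^{\min},N_i^{\max}$ from Lemma~\ref{general l-inf conv} along. Your tubular-neighborhood justification of $\delta_i\ge c\,\eta_i$ is a more explicit version of the step the paper states without detail, and your uniformity concern is handled exactly as you suggest.
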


\begin{proof}
    This proof is identical to that of Corollary~\ref{coro:eps conv rate}. 
    
    

    \end{proof}

\begin{remark}
    As in Lemma~\ref{coro:eps conv rate}, we expect that the convergence rate for minimizers of the generalized adversarial training problem \eqref{DATP} should be improved to $O(\e)$, but this would require more refined estimates than those available in Lemma~\ref{general slices}.

\end{remark}

\subsection{Application to the Probabilistic Adversarial Training Problem}\label{sec:patp app}

We now turn our attention the probabilistic adversarial training problem \eqref{PATP} which we will view as an instance of the generalized adversarial training problem \eqref{DATP}. In order to apply the results for \eqref{DATP} to \eqref{PATP}, we must verify that Assumptions~\ref{phi consistency} and \ref{phi assumptions} hold. In Remark~\ref{rem:verify assumption}, we established that \eqref{PATP} satisfies Assumption~\ref{phi consistency}; thus it only remains to show in the following proposition that \eqref{PATP} satisfies Assumption~\ref{phi assumptions}. 

\begin{proposition}
    Let $\e > 0$, $p \in [0,1)$, and $\{\mathfrak p_{x,\e}\}_{x\in\R^d}$ be a family of probability measures satisfying Assumption~\ref{p assumption}. The deterministic attack function $\phi_{\e,p}$ associated with the probabilistic adversarial training problem \eqref{PATP} satisfies Assumption~\ref{phi assumptions}.
    \label{verify assump prob}
\end{proposition}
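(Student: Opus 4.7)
The plan is to convert the Lebesgue volume hypothesis of Assumption~\ref{phi assumptions} into a probability lower bound via the explicit density $\mathfrak p_{x,\e}$ from Assumption~\ref{p assumption}, using only the pointwise lower bound $\xi \ge c$ on $B_\dist(0,1)$ together with the fact that $\xi$ vanishes outside it.

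For the first implication, I would fix $x \in A^\comp$ satisfying $\mathcal L^d(A \cap B_\dist(x,\e)) > \beta\e^d$, for a constant $\beta$ to be chosen. The support condition on $\xi$ localizes the probability integral to $A \cap B_\dist(x,\e)$, on which $(x'-x)/\e \in B_\dist(0,1)$ and hence $\xi((x'-x)/\e) \ge c$, yielding
\[
\prob(x' \in A : x' \sim \mathfrak p_{x,\e}) = \int_{A \cap B_\dist(x,\e)} \e^{-d} \xi\!\left(\tfrac{x'-x}{\e}\right) dx' \ge c\e^{-d}\mathcal L^d(A \cap B_\dist(x,\e)) > c\beta.
\]
Choosing $\beta$ with $c\beta \ge p$ then forces $\prob(x' \in A) > p$, which is precisely the condition $\phi_{\e,p}(x;A) = 1$, so $x \in \Lambda_{\e,p}^0(A)$.

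The second implication is the identical computation with the roles of $A$ and $A^\comp$ swapped: given $x \in A$ and $\mathcal L^d(A^\comp \cap B_\dist(x,\e)) > \beta\e^d$, the same chain yields $\prob(x' \in A^\comp : x' \sim \mathfrak p_{x,\e}) > c\beta \ge p$, which places $x$ in $\Lambda_{\e,p}^1(A)$. Alternatively, one can invoke the complement property of $\phi_{\e,p}$ already established in Remark~\ref{rem:verify assumption} to transfer the first implication to the second via $\phi_{\e,p}(x;A) = \phi_{\e,p}(x;A^\comp)$.

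The only real decision is the choice of $\beta$, which must lie in the interval $[p/c,\omega_\dist)$. This range is nonempty provided $p < c\omega_\dist$; since $\int \xi = 1$ together with $\xi \ge c$ on $B_\dist(0,1)$ force $c\omega_\dist \le 1$, this amounts to a mild compatibility condition between the relaxation level $p$ and the flatness of $\xi$ (trivially satisfied at $p = 0$, which recovers the adversarial training problem). With such a $\beta$ fixed, the verification is complete. No substantive obstacle arises—the argument is a direct unpacking of definitions and requires no iteration, geometric comparison, or measure-theoretic machinery beyond the density bound from Assumption~\ref{p assumption}.
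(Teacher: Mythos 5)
Your proof is correct and takes essentially the same route as the paper, which localizes the probability integral to $A\cap B_\dist(x,\e)$, bounds $\xi$ below by $c$ there, and sets $\beta = p/c$, then invokes the complement property for the second implication. Your additional observation that $\beta$ must also satisfy $\beta < \omega_\dist$ (hence the compatibility condition $p < c\,\omega_\dist$) is a point the paper's proof silently omits, and is worth retaining.
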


\begin{proof}
   Suppose $x\in A^\comp$ such that $\mathcal L^d(A\cap B_\dist(x,\e)) > \beta\e^d$ for some $\beta >0$ to be determined. It will be sufficient to show that $\prob(x'\in A:x'\sim \mathfrak p_{x,\e}) > p$. Recall that we can express
   \begin{align*}
    \prob(x'\in A : x' \sim \mathfrak p_{x,\e}) &= \int_{\R^d} \e^{-d} \mathds{1}_{A}(x') \xi\left(\frac{x'-x}{\e}\right) \, dx' \\
    &= \int_{A \cap B_\dist(x,\e)} \e^{-d} \xi\left(\frac{x'-x}{\e}\right) \, dx'\\
   &> c\e^{-d}\mathcal L^d(A\cap B_\dist(x,\e))
\end{align*} where $c>0$ is the lower bound on $\xi$ from Assumption~\ref{p assumption}. If $\beta = \frac{p}{c}$, then $\prob(x'\in A : x' \sim \mathfrak p_{x,\e}) > p$ as desired. As the probabilistic adversarial training problem \eqref{PATP} satisfies the complement property, this is sufficient to conclude that Assumption~\ref{phi assumptions} holds for $\beta = \frac{p}{c}$.

\end{proof}

Since the probabilistic adversarial training problem \eqref{PATP} satisfies the requisite assumptions, we can state the following convergence result.

\begin{theorem}
    Suppose $\rho_0, \rho_1$ are continuous and bounded from above on $\R^d$ and fix $p \in [0,1)$. Additionally suppose $\{A_{\varepsilon_i,p}\}_{i\in \N}$ is a sequence of minimizers of the probabilistic adversarial training problem \eqref{PATP} with $\varepsilon_i \to 0^+$ as $i\to\infty$. For any compact set $K\subset \R^d$, there exist sequences $\{N^{\min}_i\}_{i\in\N}$ and $\{N^{\max}_i\}_{i\in\N}$ of measure zero sets such that 
    \[\lim_{i\to\infty}\dist_H\big(((A_{\varepsilon_i,p}\setminus N_i^{\max})\cup A_0^{\max}) \cap K, A_0^{\max}\cap K\big) = 0 \] 
    and \[\lim_{i\to\infty} \dist_H\big(((A_{\varepsilon_i,p}\cup N_i^{\min})\cap A_0^{\min})\cap K,  A_0^{\min}\cap K\big)=0.\] 
    \label{prob hausdorff conv}
\end{theorem}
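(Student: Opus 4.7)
The plan is to deduce Theorem~\ref{prob hausdorff conv} as a direct corollary of the generalized Hausdorff-convergence result Theorem~\ref{general hausdorff conv}, applied with the attack function $\phi = \phi_{\e_i,p}$. By Remark~\ref{rem: spec attack fns}, the probabilistic adversarial training problem \eqref{PATP} coincides with the generalized adversarial training problem \eqref{DATP} associated with $\phi_{\e_i,p}$, so the two notions of minimizer agree, and it suffices to verify that $\phi_{\e_i,p}$ fits the structural hypotheses demanded by Theorem~\ref{general hausdorff conv}.

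Those hypotheses are precisely Assumptions~\ref{phi consistency} and \ref{phi assumptions} for $\phi_{\e_i,p}$, together with continuity and boundedness of $\rho_0,\rho_1$, which are included in the present statement. The complement property and $\Lambda$-monotonicity (Assumption~\ref{phi consistency}) were verified for $\phi_{\e,p}$ in Remark~\ref{rem:verify assumption}, where each of the four monotonicity conditions reduced to monotonicity of $A \mapsto \prob(x' \in A : x' \sim \mathfrak p_{x,\e})$ with respect to set inclusion. Assumption~\ref{phi assumptions} was just established in Proposition~\ref{verify assump prob}, with $\beta = p/c$ (and any positive $\beta < \omega_\dist$ when $p = 0$), where $c > 0$ is the uniform lower bound on $\xi$ on the unit ball from Assumption~\ref{p assumption}.

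With both assumptions in hand, applying Theorem~\ref{general hausdorff conv} to the given sequence $\{A_{\e_i,p}\}$ and the fixed compact $K$ immediately produces sequences of $\mathcal L^d$-null sets $\{N_i^{\max}\}$ and $\{N_i^{\min}\}$ (built via the finite covering argument inside the proof of Lemma~\ref{general l-inf conv}, using compactness of $K$ and continuity of $w_0\rho_0 - w_1\rho_1$) for which the two stated Hausdorff limits hold verbatim. There is essentially no obstacle: all of the analytic machinery — the energy exchange inequality of Proposition~\ref{generalInequality}, the slicing argument of Lemma~\ref{general slices}, and the corralling construction of Lemma~\ref{general l-inf conv} — has already been developed in Section~\ref{sec:det conv} and requires no modification in the probabilistic case beyond the verification of Assumption~\ref{phi assumptions} carried out in Proposition~\ref{verify assump prob}. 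The proof therefore reduces to a one-line invocation of Theorem~\ref{general hausdorff conv}.
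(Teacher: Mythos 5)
Your proposal is correct and matches the paper's own reasoning: the paper states Theorem~\ref{prob hausdorff conv} as an immediate consequence of Theorem~\ref{general hausdorff conv}, justified by the verification of Assumption~\ref{phi consistency} in Remark~\ref{rem:verify assumption} and of Assumption~\ref{phi assumptions} in Proposition~\ref{verify assump prob}, which is exactly the route you take. Your parenthetical remark handling the edge case $p=0$ (where $\beta = p/c$ would degenerate) is a small but welcome refinement over the paper's presentation.
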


When Assumption~\ref{nondegen assumption} holds, Theorem~\ref{prob hausdorff conv} asserts that 
\[\lim_{i\to\infty}(((A_{\e_i,p}\cup N_i^{\min}\setminus N^{\max}_i)\cup A_0)\cap K, A_0\cap K)= 0\] where $A_0$ is the unique Bayes classifier. Applying Corollary~\ref{cor:general roc} in this case, we find that the minimizers for the probabilistic training problem \eqref{PATP} converge to the Bayes classifier at the rate $O(\e^{\frac{1}{d+2}})$. 

We conclude the discussion of the probabilistic adversarial training problem \eqref{PATP} by commenting on why this results fails to extend to the $\Psi$-perimeter problem mentioned in Remark~\ref{rem: prev patp}.

\begin{remark}
    Recall that \textcite{bungert2023begins} consider the $\Psi$ adversarial training problem
    \begin{equation} \inf_{A\in \cB(\R^d} \E_{(x,y)\sim \mu} [|\mathds{1}_A(x) - y|] + \ProbPer_{\Psi}(A)
    \label{eqn:psi ATP}\end{equation} where the $\Psi$-perimeter is given by \[\ProbPer_{\Psi}(A)\ceq \int_{A^c} \Psi(\prob(x'\in A:x'\in \mathfrak p_{x})) d\rho_0(x) + \int_{A} \Psi(\prob(x'\in A^\comp:x'\in \mathfrak p_{x})) d\rho_1(x) \]  where $\Psi:[0,1]\to[0,1]$ is concave and non-decreasing. As opposed to the probabilistic adversarial training problem \eqref{PATP}, the existence of minimizers to \eqref{eqn:psi ATP} has been established.
    
    However, notice that the $\Psi$-perimeter \textit{cannot} be expressed as $w_0\rho_0(\Lambda_\Psi^0(A)) + w_1\rho_1(\Lambda_\Psi^1(A))$ if $\Psi$ is concave and non-decreasing as indicator functions are not concave. The $\Psi$-perimeter is an example of a data-adapted perimeter from the literature that cannot be represented via the deterministic attack framework. At present, whether the energy exchange inequality holds for the $\Psi$-perimeter remains an open question and proving this inequality for the $\Psi$-perimeter would be a promising first step towards showing uniform convergence of minimizers of \eqref{eqn:psi ATP}. 
    \label{rem:psi per}
\end{remark}

\section{Conclusion} \label{conclusion}

In this paper, we developed a unifying framework for the adversarial and probabilistic adversarial training problems to define more generalized adversarial attacks. Under natural set-algebraic assumptions, we derived the energy exchange inequality to quantify the effect of removing a set where a given label was energetically preferable from a minimizer.
Utilizing the energy exchange inequality to show that there exist minimizers disjoint from sets where the label 0 is strongly preferred energetically, we then proved uniform convergence in the Hausdorff distance for various adversarial attacks. This significantly strengthens the type of convergence established via $\Gamma$-convergence techniques \cite{bungert2024gammaconv}, as well as generalizing it to a broader class of adversarial attacks. Finally, we derived the rate of convergence based on our proof techniques. 

There are various future directions of research suggested by our results in this paper. First, the uniform convergence results increase the information that we have about minimizers and sequences of approximate minimizers. That information may be useful in establishing regularity results about minimizers, for example in the case of the adversarial training problem \eqref{eqn:ATP}, or may provide helpful information for proving existence in the generalized case. A different avenue of research to pursue would be to sharpen the convergence rates found in this paper by improving estimates from Lemmas~\ref{BR slices} and \ref{general slices} to determine whether the formally derived rate of $O(\e)$ can be achieved. Finally, one could consider how to expand the theoretical deterministic attack function framework to encapsulate other types of adversarial training problems such as $\Psi$ adversarial training problem \eqref{eqn:psi ATP}.


\section*{Acknowledgements}

The authors gratefully acknowledge the support of the NSF DMS 2307971 and the Simons Foundation TSM. The authors would also like to thank the reviewers for their insightful comments, in particular one reviewer who offered a simplified proof of Lemma~\ref{BR slices}.

\section{Appendix}
\subsection{The $U$ Sets for $\phi_\e$}
\label{app:phi eps claims}

In Remark~\ref{rem:lambda decomp}, we claim that further conclusions about the $U$ sets may be drawn when $\phi = \phi_\e$. We will now verify these claims. We consider only the cases where whether the entire set $U_i$ is attacked cannot be unambiguously determined by $\Lambda$-monotonicity (see Table~\ref{table:lambda}). In all of the following cases, we assume that the interaction of $A,E$ is nontrivial in the sense that $A\cap E$ and $A^\comp \cup E$ are both nonempty. Otherwise, the following sets will either be empty themselves or we trivially find $\dist(x,\emptyset)=\infty$. 

\begin{figure}
    \centering
    \scalebox{.75}{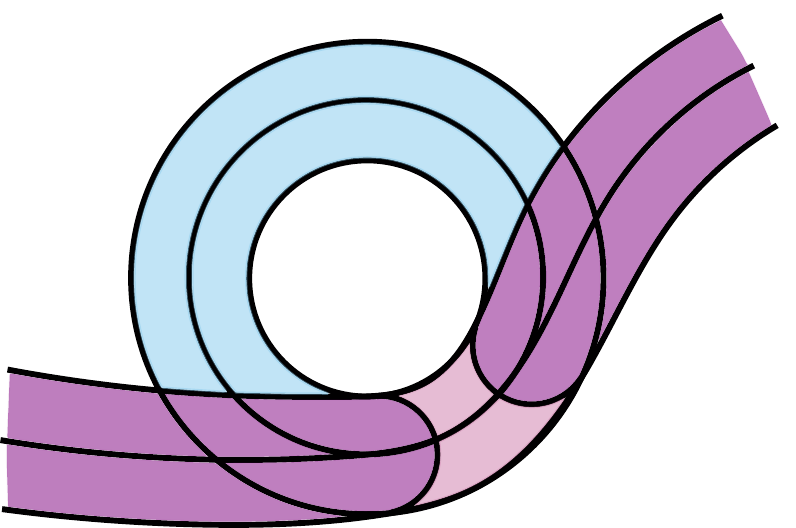}
    \caption{A degenerate example where $U_6$ and $U_9$ are neither solely attacked nor unattacked sets. The example arises because the boundaries of $A$ and $B_{\dist}(R)$ coincide. The pink and purple sets represent the $\e$-perimeter regions of $A$ whereas the blue and purple regions represent the $\e$-perimeter regions for $A\setminus \overline{B_\dist(R)}$.}
    \label{fig:U6 and U9}
\end{figure}

\begin{proposition}
    Let $\phi = \phi_\e$ and $A,E\in \mathcal B(\R^d)$. Then, $\widehat U_1 = \emptyset$.
\end{proposition} 

\begin{proof}
    
Suppose $x\in U_1\subset A\cap E^\comp$. By construction, we have $\dist(x,A^\comp) \ge \e$ and $\dist(x,E)\ge \e$. This implies that $\dist(x,A^\comp \cup E) = \dist(x,(A\setminus E)^\comp)\ge \e$ as well. Thus, $\widehat U_1 = \emptyset$. 

\end{proof}

\begin{proposition}
    Let $\phi = \phi_\e$ and $A,E\in \mathcal B(\R^d)$. Then, $\widetilde U_3 = \emptyset$.
\end{proposition} 

\begin{proof}
    
    Suppose $x\in U_3\subset A\cap E$. By construction, we have $\dist(x,A^\comp) \ge \e$ and $\dist(x,E^\comp)<\e$. As $\dist(x,A^\comp) \ge \e$, $B(x,\e)\subset A$. Furthermore, as $\dist(x,E^\comp) <\e$, $B(x,\e) \cap E^\comp \neq \emptyset$. Thus, there exists some $y \in B(x,\e)\cap E^\comp\subset A\cap E^\comp$. Hence, $\dist(x,A\setminus E) < \e$, so $\widetilde U_3 = \emptyset$.

    \end{proof}

\begin{proposition}
    Let $\phi = \phi_\e$ and $A,E\in \mathcal B(\R^d)$. Then, $\widetilde U_{10} = \emptyset$.
\end{proposition} 
\begin{proof}
    
  Suppose $x\in U_{10}$. By construction, we have $\dist(x,A)<\e$ and $\dist(x,E)\ge \e$. As $\dist(x,E)\ge \e$, $B(x,\e) \subset E^\comp$. Furthermore as $\dist(x,A) <\e$, $B(x,\e) \cap A \neq \emptyset$. Thus, there exists some $y \in B(x,\e)\cap A \subset A\cap E^\comp$. Hence, $\dist(x,A\setminus E) < \e$, so $\widetilde U_{10} = \emptyset$.

  \end{proof}

   As for $U_6, U_9$, and $U_{11}$, we can make no determinations about whether all points in these sets must be attacked or not. Figure~\ref{fig:ePerregions} shows an example where $U_{11}$ must be split into attacked and unattacked subsets. In special cases where the boundaries of the sets $A$ and $E$ coincide, $U_6$ and $U_9$ may also need to be split into attacked and unattacked subsets (see
   Figure~\ref{fig:U6 and U9}).

\subsection{$\Lambda$-Set Decompositions}
\label{app:Lambda sets}

For completeness, we give further details about the decompositions by $U$ sets in Proposition~\ref{generalInequality}, namely $\Lambda_\phi^0(A\setminus E), \Lambda_\phi^1(A\setminus E), A\cap E, A\setminus E, (A\setminus E)^\comp, D_\phi(A;E),$ and $D_\phi(E^\comp;A)$. Table~\ref{table:lambda} is reproduced for ease of reference.

\begin{table}[!ht]
\centering
\begin{tabular}{|c|c|}
\hline 
If $x\in U_i$ & Then for $\Lambda(A \backslash E)$ we have  \\
\hline 
$U_1\ceq\tilde\Lambda^1_\phi(A) \cap \tilde\Lambda^0_\phi(E)$ & N/A  \\
$U_2\ceq\tilde\Lambda^1_\phi(A) \cap {\Lambda^0_{\phi}}(E)$ & $x\in \Lambda^1_\phi(A\setminus E)$ \\ 
$U_3\ceq\tilde\Lambda^1_\phi(A) \cap {\Lambda^1_{\phi}}(E)$ & N/A  \\
$U_4\ceq\tilde\Lambda^1_\phi(A) \cap \tilde\Lambda^1_\phi(E)$ &  $x\in \tilde\Lambda_\phi^0(A\setminus E)$  \\ 
$U_5\ceq\Lambda^1_{\phi}(A) \cap \tilde\Lambda^1_\phi(E)$ &  $x\in \tilde\Lambda_\phi^0(A\setminus E)$ \\ 
$U_6\ceq \Lambda^1_{\phi}(A) \cap \Lambda^1_{\phi}(E)$ & N/A  \\
$U_7\ceq \Lambda^1_{\phi}(A) \cap \Lambda^0_{\phi}(E)$ & $x\in \Lambda_\phi^1(A\setminus E)$  \\ 
$U_8\ceq\Lambda^1_{\phi}(A) \cap \tilde\Lambda^0_\phi(E)$ & $x\in \Lambda_\phi^1(A\setminus E)$\\ 
$U_9\ceq{\Lambda^0_{\phi}}(A) \cap {\Lambda^0_{\phi}}(E)$ & N/A \\
$U_{10}\ceq{\Lambda^0_{\phi}}(A) \cap \tilde\Lambda^0_\phi(E)$ & N/A \\
$U_{11}\ceq\Lambda^0_{\phi}(A) \cap {\Lambda^1_{\phi}}(E)$ & N/A  \\
$U_{12}\ceq\Lambda^0_{\phi}(A) \cap \tilde\Lambda^1_\phi(E)$ & $x\in \tilde\Lambda_\phi^0(A\setminus E)$ \\
$U_{13}\ceq \tilde\Lambda^0_\phi(A)$ & $x\in \tilde\Lambda_\phi^0(A\setminus E)$ \\
\hline
\end{tabular}
\end{table}

\begin{itemize}
    \item $\Lambda_{\phi}^0(A\setminus E)$ is comprised of all $U$ sets such that $U_i\not\in A\setminus E$ and the points can be attacked by the adversary for the classifier $A\setminus E$. The $U_i\not\in A\setminus E$ are all $U$ sets such that the $\Lambda$-set for $A$ has the superscript 0 or the $\Lambda$-set for $E$ has the superscript 1, that is, $U_3, U_4, U_5, U_6, U_9, U_{10}, U_{11},U_{12}$ and $U_{13}$. However, $U_4, U_5, U_{12},$ and $U_{13}$ are all unattacked by Table~\ref{table:lambda}. Thus, $\Lambda_{\phi}^0(A\setminus E)$ contains the attacked subsets of $U_3, U_6, U_9, U_{10},$ and $U_{11}$. Hence,
    \[\Lambda_{\phi}^0(A\setminus E) = \widehat U_3\cup \widehat U_6\cup \widehat U_9\cup \widehat U_{10} \cup \widehat U_{11}.\]
    \item $\Lambda_{\phi}^1(A\setminus E)$ is comprised of all $U$ sets such that $U_i\in A\setminus E$ and the points can be attacked by the adversary for the classifier $A\setminus E$. The $U_i\not\in A\setminus E$ are all $U$ sets such that the $\Lambda$-set for $A$ has the superscript 1 and the $\Lambda$-set for $E$ has the superscript 0, that is, $U_1, U_2, U_7,$ and $U_8$. By Table~\ref{table:lambda}, the sets $U_2, U_7$, and $U_8$ are belong entirely to $\Lambda_{\phi}^1(A\setminus E)$, so $\Lambda_{\phi}^1(A\setminus E)$ contains those sets and the attacked subset of $U_1$. Hence,
    \[\Lambda_{\phi}^1(A\setminus E) = \widehat U_{1}\cup U_2\cup U_7\cup U_8.\]
    \item $A\cap E$ is comprised of all $U$ sets such that the $\Lambda$-sets for $A$ and $E$ both have the superscript $1$. Hence,
    \[A\cap E = U_3\cup U_4\cup U_5 \cup U_6.\]
    \item  $A\setminus E$ is comprised of all $U$ sets such that the $\Lambda$-set for $A$ has the superscript 1 and  the $\Lambda$-set for $E$ has the superscript $0$. Hence,
    \[A\setminus E = U_1\cup U_2\cup U_7\cup U_8.\]
    \item $(A\setminus E)^\comp$ is comprised of all $U$ sets not in $A\setminus E$, or alternatively, all $U$ sets such that either the $\Lambda$-set for $A$ has the superscript 0 or the $\Lambda$-set for $E$ has the superscript $1$. Hence,
    \[(A\setminus E)^\comp = U_3\cup U_4\cup U_5\cup U_6\cup U_9\cup U_{10}\cup U_{11}\cup U_{12}\cup U_{13}.\]
    \item Recall $D_\phi(A;E) = w_0\rho_0(\Lambda_\phi^0(A)\cap E) + w_1\rho_1(\Lambda_\phi^1(A)\cap E)$. The set $E$ can be expressed in terms of $\Lambda$-sets by $E =\Lambda_\phi^1(E) \cup \tilde \Lambda_\phi^1(E).$ By Table~\ref{table:lambda},
    \[ D_\phi(A;E)= w_0\rho_0(U_{11}\cup U_{12}) + w_1\rho_1(U_5\cup U_6).\]
    \item Recall $D_\phi(E^\comp;A) = w_0\rho_0(\Lambda_\phi^1(E)\cap A) + w_1\rho_1(\Lambda_\phi^0(E)\cap A)$ by the Complement Property of $\Lambda$-sets. The set $A$ can be expressed in terms of $\Lambda$-sets by $A = \Lambda_\phi^1(A)\cup \tilde\Lambda_\phi^1(A)$. By Table~\ref{table:lambda},
    \[ D_\phi(E^\comp;A)= w_0\rho_0(U_{3}\cup U_{6}) + w_1\rho_1(U_2\cup U_7).\]
\end{itemize}

\printbibliography

\end{document}